\newtheorem{theorem}{Theorem}
\newtheorem{lemma}[theorem]{Lemma}
\theoremstyle{remark}
\newtheorem{cor}[theorem]{\bf Corollary}
\newtheorem{prop}[theorem]{\bf Proposition}
\numberwithin{equation}{section}
\setlist[itemize,1]{label=$\bullet$}
\begin{document}
\include{cdmstdcmds}
\title{The directed uniform Hamilton-Waterloo Problem involving even cycle sizes}

\author{FAT\.{I}H YETG\.{I}N}
\address{Department of Mathematics, Gebze Technical University, Kocaeli, 41400, Turkey}
\email{fyetgin@gtu.edu.tr}

\author{U\u{g}ur Odaba\c{s}{\i}}
\address{Department of Engineering Sciences, Istanbul University-Cerrahpasa, Istanbul, 34320, Turkey}
\email{ugur.odabasi@iuc.edu.tr}

\author{S\.{I}Bel \"OZKAN}
\address{Department of Mathematics, Gebze Technical University, Kocaeli, 41400, Turkey}
\email{s.ozkan@gtu.edu.tr}

\date{(date1), and in revised form (date2).}
\subjclass[2010]{05C51,05C70}
\keywords{The Directed Hamilton-Waterloo Problem, $2$-factorizations, directed cycle factorizations}


\begin{abstract}
In this paper, factorizations of the complete symmetric digraph $K_v^*$ into uniform factors consisting of directed even cycle factors are studied as a generalization of the undirected Hamilton-Waterloo Problem. It is shown, with a few possible exceptions, that $K_v^*$ can be factorized into two nonisomorphic factors, where these factors are uniform factors of $K_v^*$ involving $K_2^*$ or directed $m$-cycles, and directed $m$-cycles or $2m$-cycles for even $m$.
\end{abstract}

\maketitle

\section{Introduction} \label{S:intro}
In this paper, edges and arcs are denoted by using curly braces and parentheses, respectively. Throughout this paper, we denote by $K_{(x:y)}$ a complete equipartite graph having $y$ parts of size $x$ each. Also, for a simple graph $G$, we use $G^*$ to denote the symmetric digraph with vertex set $V(G^*)=V(G)$ and arc set $E(G^*)=\bigcup_{\{x,y\}\in E(G)} \{(x,y),(y,x)\}$. Hence, $K_v^*$
and $K_{(x:y)}^*$ respectively denote the complete symmetric digraph of order $v$ and the complete symmetric equipartite digraph with $y$ parts of size $x$. We also use $(x, y)^*$ to denote the double arc which consists of $(x, y)$ and $(y, x)$.

A $k$-factor of a graph $G$ is a $k$-regular spanning subgraph of $G$. A $k$-factorization of a graph $G$ is a partition of the edge set of $G$ into $k$-factors, in other words it is a decomposition of edges set of $G$ into edge-disjoint $k$-factors. It is easy see that a $2$-factor consists of a cycle or union of vertex-disjoint cycles. There are two well-studied $2$-factorization problems. The Oberwolfach Problem asks for the existence of decomposition of the complete graph $K_v$ into the given $2$-factor $F$. The uniform version of the Oberwolfach Problem in which there is only one type of cycle in the factor $F$ has been mostly solved, see \cite{Alspach1985,Alspach1989,Hoffman1991}. In the Hamilton-Waterloo Problem, there are two types of $2$-factors. The uniform version of the Hamilton-Waterloo Problem asks for a $2$-factorization of $K_v$ (or for even $v$, $2$-factorization of $K_v-I$) in which $r$ of its $2$-factors consist of only $m$-cycles and remaining $s$ of its $2$-factors consist of only $n$-cycles, and we will denote it by HWP$(v; m^r, n^s)$. We also denote a solution as a $\{C_m^{r}, C_n^{s}\}$-factorization of $K_v$ (or $K_v-I$ for even $v$).

Initially, small cases such as $(m, n)\in \{(4,6),(4,8)$,$(4,16),(8,16),(3,5),\allowbreak(3,15),(5,15)\}$ are studied and solved with a few exceptions by Adams et al. \cite{Adams2002}, and later the cases where the cycle sizes are non-constant are investigated. The Hamilton-Waterloo Problem is nearly completely solved when  both $m$ and $n$ are simultaneously either even or odd \cite{burgess20182, burgess2017, Bryant2011, Bryant2013}. When the parity of $m$ and $n$ is different, one of the cycle sizes is usually fixed. For instance, the cases $(m,n)\in \{(3,v),(3,3x),(4,n)\}$ have been studied, see \cite{Asplund2016,  Fu2012, Keranen, Odabas2016}. For more recent results on this problem, we refer the reader to \cite{Burgess2022, burgess2018, burgess2019}. Also, there exists an asymptotic solution \cite {Glock} for the general form of the Oberwolfach and the Hamilton-Waterloo Problems.

The concept of factor and factorization can be applied to digraphs and one can consider as the directed version of the Oberwolfach and Hamilton-Waterloo Problems. In the directed version of these problems, factorization of the complete symmetric digraph $K_v^*$ into directed cycle factors is studied. The Directed Uniform Oberwolfach Problem is denoted by OP$^{*}(m^k)$ where each $2$-factor is composed of $k$ directed $m$-cycles.

The following theorem summarizes the previous results on the Directed Oberwolfach Problem that will be used in this paper.
\begin{theorem}\cite{Abel2002,Adamsun, Bermond1979, Bennett1990, Sajna2014,Sajna2018,Lacaze2023}\label{OP}
Let $m$ and $k$ be nonnegative integers. Then, $\mathrm{OP}^{*}(m^k)$ has a solution if and only if $(m, k) \notin \{(3,2), (4,1), (6,1)\}$.
\end{theorem}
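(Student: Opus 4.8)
The plan is to read the statement as an ``if and only if'' whose forward (necessity) direction concerns three small exceptions and whose backward (sufficiency) direction is a construction valid for every remaining pair. Since $\mathrm{OP}^{*}(m^k)$ asks for a decomposition of $K_{mk}^*$ into $mk-1$ factors, each a union of $k$ vertex-disjoint directed $m$-cycles, I would first record the trivial necessary conditions ($m\ge 2$, $k\ge 1$) and then argue that $(3,2)$, $(4,1)$ and $(6,1)$ genuinely admit no solution. The two cases with $k=1$ are precisely the assertions that $K_4^*$ and $K_6^*$ have no decomposition into directed Hamilton cycles, which is Tillson's theorem; the case $(3,2)$, a decomposition of $K_6^*$ into factors of two directed triangles, I would rule out by a short exhaustive/counting argument on $6$ vertices.

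For sufficiency I would dispose of two extreme families first. When $k=1$ the problem is the directed Hamiltonian decomposition of $K_m^*$, which exists for all $m\notin\{4,6\}$ by Tillson's theorem. When $m=2$ a directed $2$-cycle is a pair of opposite arcs on a single edge, so a factor of $k$ directed $2$-cycles is exactly a perfect matching of $K_{2k}$ with each edge doubled; hence $\mathrm{OP}^{*}(2^k)$ is equivalent to a $1$-factorization of $K_{2k}$, which exists for every $k$. These provide the base cases.

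For the generic range $m\ge 3$, $k\ge 2$ I would use a difference (Cayley) construction on the vertex set $\{0,1,\dots,mk-1\}$ taken modulo $mk$. For a difference $d$, the arcs $(i,i+d)$ form a permutation digraph that is a disjoint union of directed cycles of length $mk/\gcd(mk,d)$; choosing $d=ke$ with $\gcd(e,m)=1$ gives $\gcd(mk,d)=k$, hence a single factor consisting of exactly $k$ directed $m$-cycles. Collecting all admissible such $d$ (and their negatives, which yield the reversed factors) accounts for a large, structured block of the arc set. The remaining differences can then be organized as a complete symmetric equipartite digraph $K_{(\cdot:\cdot)}^*$ and decomposed into $C_m$-factors recursively, appealing to smaller solved instances together with known equipartite decomposition results, while the within-part copies of $K_m^*$ are filled using the base cases above.

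The step I expect to be the main obstacle is exactly this decomposition of the leftover differences: the admissible differences never cover all of $K_{mk}^*$, so the crux is a uniform recursive scheme that (i) decomposes the equipartite remainder into directed $m$-cycle factors for all residues of $m$ and $k$, and (ii) merges these partial factors with the within-part arcs without producing cycles of the wrong length. A number of boundary cases (small $m$, in particular $m\in\{3,4,6\}$, and small $k$) will lie outside the reach of the generic recursion and must be settled by tailored direct constructions or by computer search; confirming that $(3,2)$, $(4,1)$ and $(6,1)$ are the only cases left uncovered, and hence the only genuine exceptions, is where the bulk of the case analysis lies.
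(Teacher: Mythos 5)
The first thing to note is that the paper does not prove this statement at all: Theorem~\ref{OP} is a survey citation, compiling the solution of the directed uniform Oberwolfach problem from seven papers spanning 1979--2023 (Bermond--Germa--Sotteau for $m=3$, Bennett--Zhang for $m=4$, Abel--Bennett--Ge for $m=5$, Adams--Bryant for small cycle lengths with all indices, Burgess--S\'ajna for even $m$, and Burgess--Franceti\'c--S\'ajna together with Lacaze-Masmonteil for odd $m$). So there is no ``paper's proof'' to compare against; the relevant benchmark is that body of literature, which is essentially a multi-decade research program rather than a single argument.

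Measured against that benchmark, your proposal has a genuine gap, and you name it yourself. The necessity half and the base cases are fine: $(4,1)$ and $(6,1)$ are Tillson's theorem on Hamiltonian decompositions of $K_4^*$ and $K_6^*$, $(3,2)$ is the nonexistence of a resolvable Mendelsohn triple system of order $6$, $k=1$ is again Tillson, and $m=2$ reduces to a $1$-factorization of $K_{2k}$ (this is the paper's Proposition~\ref{lemma3}). The difference construction is also sound as far as it goes: for $d=ke$ with $\gcd(e,m)=1$ the arcs $(i,i+d)$ do form a factor of $k$ directed $m$-cycles. But only $\phi(m)$ of the $mk-1$ nonzero differences have this form, so these factors cover a small fraction of the arc set, and the entire difficulty of the theorem is the step you defer: organizing all the remaining difference classes into directed $m$-cycle factors, uniformly in $m$ and $k$. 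This is precisely where the cited papers diverge --- the even and odd cases of $m$ required genuinely different constructions, and the odd case resisted completion until 2023 --- so invoking ``a uniform recursive scheme'' plus ``tailored direct constructions or computer search'' is a research plan, not a proof. As written, your sufficiency argument is established only for $k=1$, for $m=2$, and for the $\phi(m)$ special factors; everything else, including the confirmation that no further exceptions arise, remains open in your write-up.
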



When it comes to the Directed Hamilton-Waterloo Problem, here $K_v^*$ is decomposed into two types of directed $2$-factors. If these factors consist of directed cycles of sizes $m$ and $n$ respectively, the notation HWP$^{*}(v; m^{r}, n^{s})$ is used to denote the Directed Uniform Hamilton-Waterloo Problem.

In \cite{biz2022}, the necessary conditions for the existence of a solution to the Directed Hamilton-Waterloo Problem are given. 

\begin{lemma}\cite{biz2022}\label{necessary}
If $\mathrm{HWP}^{*}(v; m^{r}, n^{s})$ has a solution then the following statements hold;
\begin{enumerate}
\item if $r>0$, $v \equiv 0 \pmod m$, 
\item if $s>0$, $v \equiv 0 \pmod n$, 
\item $r+s=v-1$.
\end{enumerate} 
\end{lemma}
Additionally, the cases $(m, n)\in \{(3,5),(3,15),(5,15),(4,6),(4,8),(4,12),$ $(4,16),(6,12),(8,16)\}$ are solved with a few  possibly exceptions in \cite{biz2022}.

In \cite{odabas2020}, factorizations of $K_v$ into $K_2$-factors and $C_m$-factors is studied, and also new solutions to $\mathrm{HWP}(2m;$ $m^r,(2m)^s)$ are given. Here, the problem of decomposing $K_v^*$ into $K_2^*$-factors and $\vv{C}_m$-factors will be examined where $\vv{C}_m$ is the directed cycle of order $m$. Since $K_2^*$ can be considered as $\vv{C}_2$, this problem can be included in the $\mathrm{HWP}^{*}(v; 2^{r}, m^{s})$. Afterwards, $\mathrm{HWP}^{*}(v; m^{r}, (2m)^{s})$ will be studied.

In Section 2, we give some basic definitions and present some preliminary results that will be used in the next sections. In Section 3, we focus on finding solutions to $\mathrm{HWP}^{*}(v; 2^{r}, m^{s})$ for even $m$ with $r+s=v-1$. Also a solution is denoted as a  $\{(K_{2}^{*})^{r},\vv{C}_{m}^{s}\}$-factorization of $K_v^*$. In Section 4, we will concentrate on solving $\mathrm{HWP}^{*}(v; m^{r}, (2m)^{s})$ for even $m$ with $r+s=v-1$. Here are our main results.
\begin{theorem}\label{main}
Let $r$, $s$ be nonnegative integers, and let $m\geq 4$ be even. Then, $\mathrm{HWP}^{*}(v; 2^{r}, m^{s})$ has a solution if $m| v$, $r+s=v-1$, $s \neq 1$, $(r, v)\neq(0, 6)$, $(m, r, v)\neq (4,0,4)$, and one of the following conditions holds;
\begin{enumerate}
\item $m > 4$, $s\neq 3$ and $m\equiv 0 \pmod 4$,
\item $m > 4$, $\frac{v}{m}$ is even, $s\neq 3$ and $m\equiv 2 \pmod 4$,
\item $m=4$ and $v\equiv 0,8,16 \pmod {24}$,
\item $m=4$, $v\equiv 12 \pmod {24}$ and $s\notin \{3, 5\}$,
\item $m=4$, $v\equiv 4, 20 \pmod {24}$ and $r$ is odd.
\end{enumerate}
\end{theorem}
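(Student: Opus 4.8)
The plan is to separate the two extreme configurations from the genuinely mixed ones. When $s=0$ every factor is a $K_2^*$-factor, and a solution is obtained simply by doubling each $1$-factor of a $1$-factorization of $K_v$, which exists because $v$ is even (the necessary condition forces $v$ even once $r=v-1>0$); when $r=0$ every factor is a $\vv{C}_m$-factor and a solution is precisely a solution of $\mathrm{OP}^{*}(m^{v/m})$, supplied by Theorem \ref{OP}. The two excluded triples $(r,v)=(0,6)$ (forcing $m=6$) and $(m,r,v)=(4,0,4)$ are exactly the Oberwolfach exceptions $\mathrm{OP}^{*}(6^{1})$ and $\mathrm{OP}^{*}(4^{1})$, so no generality is lost in omitting them. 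Hence the substance of the theorem lies in the mixed range $0<r$, $0<s$, which I would attack with a recursive group construction supported by small base cases.

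For the recursion, write $v=mt$ and partition $V(K_v^*)$ into $t$ groups of size $m$, giving the edge-disjoint decomposition
\begin{equation*}
K_v^{*} = \Bigl(\bigcup_{i=1}^{t} (K_m^{*})_i\Bigr)\ \cup\ K_{(m:t)}^{*},
\end{equation*}
where $(K_m^{*})_i$ is the complete symmetric digraph on group $i$ and $K_{(m:t)}^{*}$ is the complete symmetric equipartite digraph with $t$ parts of size $m$. The idea is to feed the between-group part $K_{(m:t)}^{*}$ into a directed Hamilton--Waterloo result for equipartite digraphs, decomposing it into a prescribed number of spanning $K_2^{*}$-factors and spanning $\vv{C}_m$-factors, and to handle the $t$ within-group copies of $K_m^{*}$ through the base case $\mathrm{HWP}^{*}(m;2^{a},m^{b})$ with $a+b=m-1$ (here a $\vv{C}_m$-factor is a directed Hamilton cycle). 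Performing the \emph{same} split $(a,b)$ in every group and then taking the union, across all $t$ groups, of the $j$-th within-group factor produces a spanning factor of $K_v^{*}$ that is uniformly of $K_2^{*}$-type or of $\vv{C}_m$-type; these $m-1$ assembled factors, together with the factors coming from $K_{(m:t)}^{*}$, realize the target pair $(r,s)$, and the arithmetic $r+s=(a+b)+m(t-1)=mt-1=v-1$ is automatic. The preliminary section would supply both ingredients: the equipartite $\vv{C}_m$-factorizations and mixed factorizations, and the base cases $\mathrm{HWP}^{*}(m;2^{a},m^{b})$ via a difference/starter construction on the integers modulo $m$, using the difference $m/2$ to produce a $K_2^{*}$-factor and mixed-difference starters to produce $\vv{C}_m$-factors.

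The main obstacle will be producing the \emph{exact} prescribed values of $r$ and $s$ for every admissible case rather than only for the extremes: the recursion delivers factors in blocks, so covering all values of $s$ forces several parallel sub-constructions and a careful interpolation, and it is precisely here that the exceptional small values $s\in\{1,3,5\}$ must be excluded. I expect the parity hypothesis ``$v/m$ even'' for $m\equiv 2\pmod 4$ to come directly from the existence conditions for the equipartite $\vv{C}_m$-factorizations, where an odd number of parts obstructs the required construction, while the congruence conditions modulo $24$ for $m=4$ should reflect the divisibility by $8$ and by $3$ needed to assemble $\vv{C}_4$-factors and to steer clear of the residual Oberwolfach exceptions. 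Granting the preliminary factorizations, correctness then reduces to a bookkeeping argument, organized by $m \bmod 4$ and (for $m=4$) by $v \bmod 24$, verifying that the between-group and within-group contributions sum to the required $(r,s)$ with $r+s=v-1$.
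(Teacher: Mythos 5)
There is a genuine gap, and it sits exactly where you wrote ``granting the preliminary factorizations.'' Your decomposition uses groups of size $m$, so all between-group arcs land in the single equipartite digraph $K_{(m:t)}^{*}$, and your plan requires factorizing that digraph into a \emph{prescribed mixture} of spanning $K_2^{*}$-factors and $\vv{C}_m$-factors. No such mixed equipartite result is available in the paper, nor in the literature in this generality: the preliminaries supply only \emph{uniform} factorizations of equipartite digraphs, all $K_2^{*}$-factors (Lemma \ref{dliu2}) or all $\vv{C}_m$-factors (Lemmata \ref{lemma2} and \ref{dliu}). With only these, and the same split $(a,b)$ in every group, your $r$ is confined to the two values $a$ and $a+m(t-1)$ with $0\le a\le m-1$, missing nearly every admissible value once $t\ge 3$; what you are invoking is essentially the directed Hamilton--Waterloo problem for equipartite digraphs, a problem of the same order of difficulty as the theorem itself. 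The paper evades this by taking groups of size $m/2$, not $m$: collapsing gives a quotient $K_{2x}^{*}$ (where $v=mx$), which has a $K_2^{*}$-factorization for \emph{every} $x$ (Proposition \ref{lemma3}) since its order $2x$ is always even. One quotient factor, together with all within-group arcs, blows up to a $K_m^{*}$-factor ($x$ disjoint copies of $K_m^{*}$); each of the remaining $2x-2$ quotient factors becomes a $K_{(\frac{m}{2}:2)}^{*}$-factor that is factorized \emph{uniformly} into $\frac{m}{2}$ $K_2^{*}$-factors or $\frac{m}{2}$ $\vv{C}_m$-factors, so all mixing is concentrated in the single flexible $K_m^{*}$-factor and $r$ is realized as $\frac{m}{2}r_0+r'$. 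Note that patching your construction by pairing up size-$m$ groups into $K_{(m:2)}^{*}$-factors would require a $1$-factorization of $K_t$, which fails for odd $t$; halving the group size is precisely what removes that parity obstruction.

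The second gap is the base cases. You assume $\mathrm{HWP}^{*}(m;2^{a},m^{b})$ has a solution for all $a+b=m-1$ via a ``difference/starter construction,'' but this is both unproven and partly false: $b=1$ is impossible (removing $m-2$ disjoint $K_2^{*}$-factors from $K_m^{*}$ forces the remainder to be a $K_2^{*}$-factor), and the mixed factorizations of $K_m^{*}$ are the technical heart of the paper --- Walecki decompositions combined with Häggkvist-type blow-ups $C_{m/2}^{*}[2]$, $C_{m/2}^{*}[2]\oplus I_m^{*}$ and $\Gamma_{m/2}^{*}$ (Lemmata \ref{lemma4}, \ref{lemma2.6}, \ref{lemma2.4}, Corollary \ref{lemmagamma}). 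Even with that machinery the value $r'=m-4$ (i.e.\ $s'=3$) is unreachable, and this one missing value is what produces the $s=3$ exceptions in conditions (1)--(2) and forces the separate construction through $K_{2m}^{*}$ and Lemma \ref{lemma3.10}, which is what requires $\frac{v}{m}$ even when $m\equiv 2\pmod 4$ --- so your guess that this parity condition comes from equipartite existence conditions is also incorrect. Likewise, the mod-$24$ conditions for $m=4$ arise from ad hoc factorizations of $K_{12}^{*}$ and $K_{(4:3)}^{*}$ assembled via Kirkman triple systems (Lemmata \ref{lemma2.11}, \ref{lemma2.13}), not from a generic divisibility count. In short, your overall frame (within-group base cases plus between-group equipartite pieces, same split in every group) matches the paper's in spirit, but the two ingredients you defer are exactly where the theorem's content, and all of its exceptions, live.
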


\begin{theorem}
Let $r$, $s$ be nonnegative integers, and let $m\geq 4$ be even. Then, $\mathrm{HWP}^{*}(v; m^{r}, (2m)^{s})$ has a solution if and only if $m| v$, $r+s=v-1$ and $v\geq 4$ except for $(s,v,m)\in\{(0,4,4),(0,6,3),(5,6,6)\}$, and except possibly when $s\in\{1,3\}$.
\end{theorem}
\section{Preliminary Results}

Let $G$ be a graph and $G_{0}, G_{1}, \ldots, G_{k-1}$ be $k$ vertex disjoint copies of $G$ with $v_{i} \in V\left(G_{i}\right)$ for each $v \in V(G)$. Let $G[k]$ denote the graph with vertex set $V(G[k])=V\left(G_{0}\right) \cup V\left(G_{1}\right) \cup \ldots \cup V\left(G_{k-1}\right)$ and edge set $E(G[k])=\left\{\{u_{i}, v_{j}\}: \{u, v\} \in\right.$ $E(G)$ and $0 \leq i, j \leq k-1\}$. It is easy see that there is an $H[k]$-factorization of $G[k]$ if the graph $G$ has an $H$-factorization.

Häggkvist used $G[2]$ to build $2$-factorizations that include even cycles \cite{Haggkvist1985}.

\begin{lemma}[Häggkvist Lemma]
Let $G$ be a path or a cycle with $n$ edges and let $H$ be $a$ 2-regular graph on $2n$ vertices with all components even cycle. Then $G[2]\cong G^{\prime} \oplus \mathrm{G}^{\prime \prime}$ where $G^{\prime} \cong G^{\prime \prime} \cong H$. Therefore, $G[2]$ has an $H$-decomposition.
\end{lemma}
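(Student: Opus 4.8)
The plan is to give a direct construction that, for each edge of $G$, assigns two of its four lifted edges in $G[2]$ to $G'$ and the other two to $G''$, arranged so that $G'$ and $G''$ both turn out to be copies of $H$. First I would fix a convenient picture of $G[2]$: write the two copies of a vertex $v$ as $v_0$ and $v_1$, and for each edge $\{u,v\}$ of $G$ call the lifted edges $\{u_0,v_0\},\{u_1,v_1\}$ \emph{parallel} and $\{u_0,v_1\},\{u_1,v_0\}$ \emph{crossing}. Thinking of the $v_0$'s as a top row and the $v_1$'s as a bottom row, a cycle inside $G[2]$ is just a walk along edges of $G$ that at each step either keeps its row (a parallel edge) or switches rows (a crossing edge), and that is additionally allowed to turn around at a vertex by using two lifted edges of one and the same edge of $G$.

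Since the components of $H$ are even cycles, say of lengths $2a_1,\dots,2a_t$, and since $H$ has $2n$ vertices and hence $2n$ edges, we have $a_1+\dots+a_t=n$. The second step is a gadget: I would show that the $4a$ lifted edges lying above any sub-path of $G$ with $a$ edges split into two edge-disjoint $2a$-cycles, one destined for $G'$ and one for $G''$. Concretely, the $G'$-cycle runs along the top row to the far endpoint, turns around, returns along the bottom row, and turns around again near the start; the leftover lifted edges then form a second $2a$-cycle, which becomes the $G''$-cycle. A short verification, which I would record once for a generic $a$, shows that each of the two cycles really has length $2a$, that the two turn-arounds of the $G'$-cycle occur at a prescribed pair of ``corner'' copies while the $G''$-cycle turns at the two opposite corners, and that each lifted edge is used exactly once across the two cycles.

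The third step is to cut $G$ into $t$ consecutive sub-paths with $a_1,\dots,a_t$ edges and apply the gadget to each block, so that $G'$ and $G''$ each become a disjoint union of cycles of lengths $2a_1,\dots,2a_t$, i.e.\ a copy of $H$. The step I expect to be the main obstacle is the bookkeeping at the vertices of $G$ shared by two consecutive blocks: there each copy must receive degree exactly $2$, which forces the two blocks meeting at such a vertex to turn around at \emph{different} copies (one at the top copy, the other at the bottom copy), since turning both at the same copy would create a degree-$4$ vertex and isolate the other copy. I would resolve this by the uniform choice ``left corner $=$ top, right corner $=$ bottom'' for every block; adjacent blocks then automatically turn at opposite copies, every copy of every vertex receives degree $2$, and—crucially—because each block's $G''$-cycle turns at the corners opposite to those of its $G'$-cycle, the same corner choice makes $G''$ consistent at the shared vertices as well, so that $G''\cong H$ comes for free. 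Finally I would split the two cases of the statement: when $G$ is a cycle the blocks close up around it, every vertex is shared, and $G'$ and $G''$ are spanning $2$-regular subgraphs, giving a genuine decomposition $G[2]\cong G'\oplus G''$ with $G'\cong G''\cong H$; when $G$ is a path the two extreme endpoints are unshared, so $G'$ and $G''$ each miss one copy at each end (consistent with $H$ covering only $2n$ of the $2n+2$ vertices of $G[2]$), and the conclusion is the asserted edge-decomposition into two copies of $H$.
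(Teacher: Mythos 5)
The paper itself does not prove this lemma at all---it is quoted as an external result from H\"aggkvist \cite{Haggkvist1985}---so there is no internal proof to compare yours with; judged on its own merits, your construction is correct, and it is in essence H\"aggkvist's original argument. The two load-bearing steps are exactly the ones you isolate, and both check out. For the gadget: over a block $w^1w^2\cdots w^{a+1}$ with $a$ edges, the cycle $(w^1_0,w^2_0,\ldots,w^a_0,w^{a+1}_1,w^a_1,\ldots,w^2_1)$ has length $2a$, turns at $w^1_0$ and $w^{a+1}_1$, and its complement among the $4a$ lifted edges is again a single $2a$-cycle through the opposite corners $w^1_1$ and $w^{a+1}_0$: it zigzags along crossing edges to the far end, crosses over on the single unused parallel edge there, and zigzags back to the unused parallel edge $\{w^1_1,w^2_1\}$; the exact vertex pattern depends on the parity of $a$, but it closes up into one cycle in either case, so the verification you defer does go through. (Note that since $H$ is simple, every component has length at least $4$, so every $a_i\geq 2$ and no degenerate one-edge block arises.) Your corner convention ``left corner top, right corner bottom'' is indeed the crux rather than a cosmetic choice: the symmetric convention, with both turn-arounds of a block's $G'$-cycle in the same row, also yields two $2a$-cycles per block but collides at shared vertices, exactly as you observe, and the asymmetric choice fixes $G'$ and $G''$ simultaneously because the complementary cycle automatically turns at the opposite corners. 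One boundary case deserves a sentence in a final write-up: when $H$ is a single $2n$-cycle and $G$ is a cycle, there is only one block and it meets itself at its two ends; your convention still works there, since the two corners are then the two distinct copies of one and the same vertex of $G$.
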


If $G_{1}$ and $G_{2}$ are two edge-disjoint graphs with $V(G_1)=V(G_2)$, then we use $G_{1} \oplus G_{2}$ to denote the graph on the same vertex set 
with $E\left(G_{1} \oplus G_{2}\right)=E\left(G_{1}\right) \cup E\left(G_{2}\right)$. We will denote the vertex disjoint union of $\alpha$ copies of $G$ by $\alpha G$.

The above definitions can be extended to cover digraphs. Let D be a digraph and  $D_{0}, D_{1}, \ldots, D_{k-1}$ be $k$ vertex disjoint copies of $D$ with $v_{i} \in V\left(D_{i}\right)$ for each $v \in V(D)$. Then, $D[k]$ has the vertex set $V(D[k])=V\left(D_{0}\right) \cup V\left(D_{1}\right) \cup \cdots \cup V\left(D_{k-1}\right)$ and arc set $E(D[k])=\left\{(u_{i}, v_{j}): (u, v) \in\right.$ $E(D)$ and $0 \leq i, j \leq k-1\}$.

The following proposition, which is useful for transferring the results of undirected graphs to digraph and symmetric digraph, states that if we have an $H$-factorization of the undirected graph $G$, then using this factorization an $H^{*}$-factorization of $G^{*}$ can be obtained. 
\begin{prop}\label{lemma1.3}
Let $G$ be a graph and $H$ be a subgraph of $G$. If $G$ has an $H$-factorization, then $G^{*}$ has an $H^{*}$-factorization.
\end{prop}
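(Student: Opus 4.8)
The plan is to take a given $H$-factorization of $G$ and simply \emph{double} every edge into a symmetric pair of arcs, factor by factor, and then verify that the resulting objects form an $H^{*}$-factorization of $G^{*}$. Concretely, suppose $\{F_1, F_2, \ldots, F_t\}$ is an $H$-factorization of $G$, so that each $F_i$ is a spanning subgraph of $G$ whose components are all isomorphic to $H$, and the edge sets $E(F_1), \ldots, E(F_t)$ partition $E(G)$. For each $i$ I would form $F_i^{*}$, the symmetric digraph on the vertex set $V(G)$ obtained by replacing each edge of $F_i$ with its two opposite arcs, and claim that $\{F_1^{*}, \ldots, F_t^{*}\}$ is the desired $H^{*}$-factorization of $G^{*}$.

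Two things need to be checked. First, each $F_i^{*}$ is an $H^{*}$-factor of $G^{*}$: writing $F_i$ as a vertex-disjoint union $F_i = \bigsqcup_j H_{i,j}$ with every $H_{i,j} \cong H$, the star operation commutes with vertex-disjoint union, so $F_i^{*} = \bigsqcup_j H_{i,j}^{*}$ with each $H_{i,j}^{*} \cong H^{*}$; since $F_i$ is spanning in $G$, the digraph $F_i^{*}$ is spanning in $G^{*}$, so $F_i^{*}$ is indeed a spanning subdigraph all of whose components are copies of $H^{*}$. Second, the factors are arc-disjoint and cover all of $G^{*}$: because $E(G^{*}) = \bigcup_{\{x,y\} \in E(G)} \{(x,y),(y,x)\}$ and each edge $\{x,y\}$ of $G$ lies in exactly one factor $F_i$, the two arcs $(x,y)$ and $(y,x)$ lie in $E(F_i^{*})$ and in no other $E(F_j^{*})$; hence $\{E(F_1^{*}), \ldots, E(F_t^{*})\}$ partitions $E(G^{*})$.

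I do not expect any genuine obstacle here: the only point requiring care is the bookkeeping that the $(\cdot)^{*}$ operation is compatible both with taking vertex-disjoint unions of components (turning each $H$ into an $H^{*}$) and with the edge-to-arc-pair assignment (turning a partition of the edges of $G$ into a partition of the arcs of $G^{*}$). Once these two compatibilities are recorded, the two defining requirements of an $H^{*}$-factorization follow immediately, completing the argument.
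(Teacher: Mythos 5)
Your proof is correct and complete. The paper states Proposition~\ref{lemma1.3} without any proof, treating it as immediate; your edge-doubling argument---checking that the star operation respects vertex-disjoint unions (so each $H$-factor becomes an $H^{*}$-factor) and turns the edge partition of $G$ into an arc partition of $G^{*}$---is precisely the routine verification the authors leave implicit.
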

It is known that $K_{2x}$ has a $1$-factorization \cite{Reiss1859}. Therefore, as a natural consequence of Proposition \ref{lemma1.3}, the following proposition can be stated.
\begin{prop} \label{lemma3}
The complete symmetric digraph $K_{2x}^{*}$ has a $K_{2}^{*}$-factoriza-tion for every integer $x\geq 1$. 
\end{prop}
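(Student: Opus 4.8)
The plan is to deduce the statement directly from the classical $1$-factorization of $K_{2x}$ together with Proposition~\ref{lemma1.3}, so the only substantive work is to recognize that a $1$-factorization is exactly the same object as a $K_2$-factorization. Once that identification is made, the proposition is a formal corollary of the transfer principle already established.

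First I would recall that a $1$-factor of $K_{2x}$ is, by definition, a $1$-regular spanning subgraph, i.e.\ a perfect matching. Since $2x$ is even, such matchings exist, and each consists of $x$ pairwise vertex-disjoint edges, every edge being a copy of $K_2$. Hence a $1$-factor is precisely a $K_2$-factor, and a $1$-factorization of $K_{2x}$ is nothing other than a partition of $E(K_{2x})$ into $K_2$-factors, that is, a $K_2$-factorization. By \cite{Reiss1859}, $K_{2x}$ admits a $1$-factorization for every $x\geq 1$, so $K_{2x}$ has a $K_2$-factorization.

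Next I would apply Proposition~\ref{lemma1.3} with $G = K_{2x}$ and $H = K_2$. The subgraph $K_2$ is trivially a subgraph of $K_{2x}$ for every $x\geq 1$, and $K_{2x}$ admits a $K_2$-factorization by the previous step; therefore the hypotheses of Proposition~\ref{lemma1.3} are satisfied, and it yields an $H^*$-factorization of $G^*$, namely a $K_2^*$-factorization of $K_{2x}^*$, which is exactly the desired conclusion.

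There is essentially no genuine obstacle here: the result is a direct consequence of combining a cited classical fact with Proposition~\ref{lemma1.3}, and the only point requiring even momentary care is the (routine) remark that $1$-factors and $K_2$-factors coincide, which is what lets us invoke Proposition~\ref{lemma1.3} with $H = K_2$.
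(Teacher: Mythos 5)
Your proposal is correct and follows exactly the route the paper itself takes: it cites the classical $1$-factorization of $K_{2x}$ from \cite{Reiss1859}, identifies a $1$-factor with a $K_2$-factor, and applies Proposition~\ref{lemma1.3} with $G=K_{2x}$ and $H=K_2$ to transfer the factorization to $K_{2x}^*$. Your write-up is in fact slightly more explicit than the paper, which states the proposition as an immediate consequence without spelling out the identification of $1$-factors with $K_2$-factors.
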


The following result of Liu on equpartite graph has been helpful in solving the Oberwolfach and Hamilton-Waterloo problems. We will use this result to obtain a $\vv{C}_{m}$-factorization of $K_{(x: y)}^*$.
\begin{theorem}\cite{Liu2003} \label{liu}
The complete equipartite graph $K_{(x: y)}$ has a $C_{m}$-factoriza-tion for $m \geq 3$ and $x \geq 2$ if and only if $m\vert xy$, $x(y-1)$ is even, $m$ is even if $y=2$ and $(x, y, m) \neq(2,3,3),(6,3,3),(2,6,3),(6,2,6)$.
\end{theorem}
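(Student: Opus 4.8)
For \textbf{necessity}, observe that $K_{(x:y)}$ is regular of degree $x(y-1)$ on $xy$ vertices, and that a $C_m$-factor is a $2$-regular spanning subgraph all of whose components are $m$-cycles. Hence every factor partitions the $xy$ vertices into disjoint $m$-cycles, forcing $m\mid xy$, and a $2$-factorization partitions the edge set into $2$-regular factors, forcing the degree $x(y-1)$ to be even. If $y=2$ then $K_{(x:2)}\cong K_{x,x}$ is bipartite and contains no odd cycle, so $m$ must be even. The four sporadic triples $(x,y,m)\in\{(2,3,3),(6,3,3),(2,6,3),(6,2,6)\}$ satisfy all of these arithmetic tests, so for each I would give a separate \emph{nonexistence} argument: every one involves a graph on at most $18$ vertices, and the absence of a $C_m$-factorization can be certified either by a short counting or parity obstruction or, failing that, by an exhaustive search over the few candidate $C_m$-factors.

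For \textbf{sufficiency}, the backbone is the lexicographic-product identity $K_{(x:y)}=K_y[x]$ together with the fact recorded just after the definition of $G[k]$, namely that an $H$-factorization of the base lifts to an $H[x]$-factorization of the blow-up. I would first decompose the quotient $K_y$: when $y$ is odd, $K_y$ splits into $(y-1)/2$ Hamilton cycles $C_y$; when $y$ is even, $K_y-I$ splits into $(y-2)/2$ Hamilton cycles together with the one-factor $I$. Blowing up by $x$, each Hamilton cycle $C_y$ of the base becomes a copy of $C_y[x]$, a $2x$-regular graph on $xy$ vertices, and (in the even case) $I$ becomes a union of $\tfrac{y}{2}$ copies of $K_{x,x}=K_2[x]$. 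The problem is thereby reduced to two building blocks: a $C_m$-factorization of the blown-up cycle $C_k[x]$ with $k=y$, and, in the even case, a $C_m$-factorization of $K_{x,x}$.

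These building blocks I would establish by explicit construction, splitting on the parity of $x$ (recall that $x(y-1)$ even means either $x$ is even, or both $x$ and $y$ are odd). When $x$ is even I would exploit the \emph{Häggkvist Lemma} stated above, doubling a suitable path or $m$-cycle of the quotient into two isomorphic even-cycle factors, which is exactly the situation in which the doubling technique applies cleanly. Otherwise I would fall back on difference methods seeded by the known complete solution of the uniform undirected Oberwolfach problem \cite{Alspach1985,Alspach1989,Hoffman1991}, which supplies the $C_m$-factorizations of the small complete graphs underlying the construction. Reassembling these blocks along the decomposition of $K_y$ then yields the required $x(y-1)/2$ factors of $K_{(x:y)}$.

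The \textbf{main obstacle} is twofold. First, the even-$y$ reduction isolates a bare one-factor whose blow-up $K_{x,x}$ admits a $C_m$-factorization only when $m$ is even and $m\mid 2x$; since the global hypothesis guarantees only $m\mid xy$, this clean reduction breaks down precisely when $m$ divides $xy$ but not $2x$, and in that regime I would have to replace the ``Hamilton cycles plus matching'' decomposition of $K_y$ by a more global cycle decomposition of $K_{(x:y)}$ that never isolates a matching. Second, and most delicately, the construction must be pushed to cover \emph{every} admissible value of $xy$ modulo $m$ and every small part size, while \emph{failing} on exactly the four listed triples; reconciling the generic product construction with these sporadic exceptions, and proving that they are the only ones, is where the genuine case analysis and the careful small-order constructions concentrate.
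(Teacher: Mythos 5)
First, note that the paper does not prove this statement at all: it is imported verbatim from Liu \cite{Liu2003}, so there is no internal proof to compare against; your attempt has to be judged on its own merits, against what Liu's theorem actually requires.

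On those terms there is a genuine gap, concentrated at exactly the two places you yourself flag. (1) \emph{Necessity of the exceptions.} Your arithmetic conditions ($m\mid xy$, even degree $x(y-1)$, bipartiteness for $y=2$) are fine, but the four sporadic triples are not dispatched by ``a short counting or parity obstruction or exhaustive search.'' For instance, a $C_3$-factorization of $K_{(6:3)}$ is equivalent to a pair of orthogonal Latin squares of order $6$, whose nonexistence is the classical Euler--Tarry result, settled historically only by a massive case analysis; the case $(2,3,3)$ is the nonexistence of a triangle factorization of $K_{2,2,2}\cong K_6-I$, again a known but not parity-trivial fact. A proof of the theorem must either cite or reproduce these nonexistence results; as written, this direction is deferred rather than proved. (2) \emph{Sufficiency.} The reduction $K_{(x:y)}=K_y[x]$ via Walecki, lifting through the blow-up, is the right opening move, but it reduces the problem to $C_m$-factorizations of $C_y[x]$ (and of $K_{x,x}$ when $y$ is even), and that reduction is precisely the hard content of Liu's paper, not a step one can outsource to H\"aggkvist doubling plus the uniform Oberwolfach problem. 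H\"aggkvist's lemma only handles $G[2]$ with all-even-cycle factors, so it cannot produce odd $m$-cycle factors, nor does iterating it cover general even $x$; and the uniform OP results factorize $K_v$ or $K_v-I$, not blown-up cycles $C_y[x]$ or equipartite graphs, so the ``difference methods seeded by OP'' fallback has no bridge to the object you actually need. Your own ``main obstacle'' paragraph concedes that when $m\mid xy$ but $m\nmid 2x$ the Hamilton-cycles-plus-matching decomposition of $K_y$ must be abandoned for some unspecified global decomposition --- but that unspecified decomposition, constructed case by case over the residues of $xy$ modulo $m$ and small part sizes, \emph{is} the proof. So what you have is a correct statement of the standard strategy and an accurate map of where it breaks, rather than a proof; the genuinely missing ingredients are the $C_m$-factorizations of $C_y[x]$ for all admissible parameters (including odd $m$, odd $x$) and rigorous nonexistence arguments for the four exceptional triples.
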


The necessary and sufficient condition for the existence of a $1$-factorization of a complete equipartite graph $K_{(x: y)}$ is given by Hoffman and Rodger \cite{Hoffman1992}.

\begin{theorem}\cite{Hoffman1992}\label{unknown}
The complete equipartite graph $K_{(x: y)}$ has a $1$-factoriza-tion if and only if $xy$ is even.
\end{theorem}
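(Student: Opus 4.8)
The plan is to prove both directions, handling sufficiency by splitting on the parity of $y$, and to recall throughout that $K_{(x:y)}=K_y[x]$ in the blow-up notation introduced above.

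For necessity, observe that any $1$-factorization contains at least one $1$-factor, i.e.\ a perfect matching of $K_{(x:y)}$; since this graph has $xy$ vertices, a perfect matching can exist only if $xy$ is even.

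For sufficiency I would first isolate the elementary principle that drives everything: if a graph $G$ admits a $1$-factorization, then so does $G[t]$ for every $t\ge 1$. To see this, let $F_1,\dots,F_d$ be the $1$-factors of $G$, so that $G[t]=F_1[t]\oplus\cdots\oplus F_d[t]$. Each $F_i$ is a perfect matching, hence $F_i[t]$ is a vertex-disjoint union of copies of $K_{t,t}$ (one for every edge of $F_i$) spanning all of $V(G[t])$. As $K_{t,t}$ has a $1$-factorization into $t$ perfect matchings, selecting one factor from each of these disjoint copies produces $t$ perfect matchings of $F_i[t]$; ranging over $i$ yields a $1$-factorization of $G[t]$ into $dt$ factors. (This is just the case $H=K_2$ of the observation recorded above, that $G[k]$ has an $H[k]$-factorization whenever $G$ has an $H$-factorization.) With this principle available, if $y$ is even then $K_y$ itself has a $1$-factorization \cite{Reiss1859}, and applying the principle with $G=K_y$, $t=x$ gives a $1$-factorization of $K_{(x:y)}=K_y[x]$ into $(y-1)x=x(y-1)$ factors, matching the degree. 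If instead $y$ is odd, then $xy$ even forces $x$ to be even, and I would route through parts of size $2$. Fix a $1$-factorization $F_1,\dots,F_{2y-1}$ of $K_{2y}$ \cite{Reiss1859} and declare the $y$ parts of $K_{(2:y)}$ to be the $y$ edges of $F_1$; then the within-part edges are exactly $F_1$, so $K_{(2:y)}=K_{2y}-F_1=F_2\oplus\cdots\oplus F_{2y-1}$ is already $1$-factorized (into $2(y-1)$ factors). Since $K_{(x:y)}=K_{(2:y)}[x/2]$ (each part of size $2$ blown up by a further factor of $x/2$), the principle with $G=K_{(2:y)}$, $t=x/2$ finishes this case, again giving $2(y-1)\cdot\tfrac{x}{2}=x(y-1)$ factors.

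The only genuinely delicate point is the odd-$y$ case: the even-$y$ argument collapses because $K_y$ has no perfect matching when $y$ is odd, so one cannot blow up a $1$-factorization of $K_y$ directly. The resolution is the observation that $K_{(2:y)}$ is still $1$-factorable for odd $y$, being a complete graph of even order $2y$ with a single $1$-factor deleted; once this is in place, the forced evenness of $x$ lets the blow-up principle do the remaining work. Everything else is bookkeeping, namely checking that the factor counts agree with the degree $x(y-1)$ of $K_{(x:y)}$, which they do in both cases.
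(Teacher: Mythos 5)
Your proof is correct, but it takes a genuinely different route from the paper, which in fact offers no proof of this statement at all: the theorem is simply quoted from Hoffman and Rodger \cite{Hoffman1992}, whose result is the determination of the chromatic index of arbitrary complete multipartite graphs; the equipartite case stated here follows because $K_{(x:y)}$ is regular of degree $x(y-1)$ and a regular Class~1 graph of even order is $1$-factorable. Your argument is instead a self-contained elementary proof of exactly the equipartite case: necessity from the parity obstruction to a perfect matching, and sufficiency via the blow-up principle that a $1$-factorization of $G$ lifts to one of $G[t]$ (each blown-up $1$-factor is a disjoint union of copies of $K_{t,t}$, each of which splits into $t$ perfect matchings), applied to $K_y[x]$ when $y$ is even and, when $y$ is odd (so $x$ is even), to $K_{(2:y)}[x/2]$, where $K_{(2:y)}\cong K_{2y}-F_1$ inherits the $1$-factorization $F_2\oplus\cdots\oplus F_{2y-1}$ from a $1$-factorization of $K_{2y}$ \cite{Reiss1859}. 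All the steps check: the identification $K_{(x:y)}=K_{(2:y)}[x/2]$ is valid because the two non-adjacent vertices of a size-$2$ part blow up to a single independent set of size $x$, and the factor counts $x(y-1)$ agree with the degree in both cases. What your approach buys is transparency and self-containment, using only tools already present in the paper (the blow-up construction from Section~2 and the classical $1$-factorization of complete graphs of even order); what the paper's citation buys is generality, since Hoffman and Rodger's theorem handles non-equipartite complete multipartite graphs and the full chromatic index question, which your matching-based argument does not address.
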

By Proposition \ref{lemma1.3} and Theorem \ref{unknown}, we can say that $K_{(x: y)}^*$ has a $K_{2}^{*}$-factorization  for even $xy$.
\begin{lemma}\label{dliu2}
The complete symmetric equipartite digraph $K_{(x: y)}^*$ has a $K_{2}^{*}$-factorization  if and only if even $xy$.
\end{lemma}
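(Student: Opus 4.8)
The plan is to prove the two implications separately; both follow quickly from results already recorded in the excerpt together with a simple counting observation.

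For the sufficiency (the ``if'' direction), I would invoke Theorem \ref{unknown}: when $xy$ is even, the undirected complete equipartite graph $K_{(x:y)}$ admits a $1$-factorization. A $1$-factor is precisely a spanning set of vertex-disjoint edges, that is, a $K_2$-factor, so this is a $K_2$-factorization of $K_{(x:y)}$. Applying Proposition \ref{lemma1.3} with $G=K_{(x:y)}$ and $H=K_2$ (so that $H^*=K_2^*$) then immediately yields the desired $K_2^*$-factorization of $K_{(x:y)}^*$. This is essentially the remark made just before the statement, now recorded as the forward implication.

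For the necessity (the ``only if'' direction), I would argue by a parity count on the vertices. Suppose $K_{(x:y)}^*$ has a $K_2^*$-factorization; then in particular it has at least one $K_2^*$-factor $F$. By definition, $F$ is a spanning subdigraph each of whose components is a copy of $K_2^*$, i.e.\ a double arc on two vertices. Thus $F$ partitions the vertex set $V(K_{(x:y)}^*)$, which has cardinality $xy$, into blocks of size $2$, forcing $xy$ to be even. This completes the proof.

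Both directions are short, and I do not expect a genuine obstacle. The only points requiring care are the bookkeeping identifications: matching the notion of a $1$-factor of $K_{(x:y)}$ with that of a $K_2$-factor, and matching a $K_2^*$-factor with a spanning set of vertex-disjoint double arcs, so that Proposition \ref{lemma1.3} and the vertex-count argument apply cleanly. Unlike the cycle-factorization case of Theorem \ref{liu}, which carries several sporadic exceptions, the $K_2^*$ case reduces to a single clean parity condition, with all the substantive work already done in Theorem \ref{unknown}.
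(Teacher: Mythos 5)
Your proof is correct and follows essentially the same route as the paper, which obtains sufficiency by combining Theorem \ref{unknown} with Proposition \ref{lemma1.3} in the remark immediately preceding the lemma. Your explicit parity argument for necessity is the obvious one that the paper leaves implicit, so there is nothing substantively different here.
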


We will also use the following two well-known results of Walecki.
\begin{lemma}\cite{Walecki}\label{wlackiodd}
For all odd $m \geq 3$, $K_{m}$ decomposes into $\big(\frac{m-1}{2}\big)$ Hamilton cycles.
\end{lemma}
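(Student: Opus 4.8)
The plan is to use the classical rotational construction of Walecki. Write $m = 2k+1$, so that $\frac{m-1}{2} = k$, and the goal is to exhibit $k$ pairwise edge-disjoint Hamilton cycles of $K_m$; since $K_m$ is $(m-1)$-regular and a Hamilton cycle is $2$-regular, any such family automatically has union $E(K_m)$ and is therefore a decomposition. I label the vertices by $\{0, 1, \ldots, 2k-1\} \cup \{\infty\}$, where $\infty$ is a fixed symbol and all arithmetic on the remaining labels is taken modulo $2k$. The base object is the zigzag Hamilton path
$$P_0 = 0,\; 1,\; -1,\; 2,\; -2,\; \ldots,\; -(k-1),\; k$$
on $\{0, \ldots, 2k-1\}$, which visits every residue exactly once and has endpoints $0$ and $k$; adjoining $\infty$ gives the base Hamilton cycle $H_0 = \infty, 0, 1, -1, \ldots, k, \infty$ of $K_m$. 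I then let $H_r$, for $r = 0, 1, \ldots, k-1$, be the image of $H_0$ under the rotation fixing $\infty$ and sending each label $x$ to $x + r \pmod{2k}$. Each $H_r$ is again a Hamilton cycle, so it remains only to check that $H_0, \ldots, H_{k-1}$ are pairwise edge-disjoint.

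A count confirms this is the right number of cycles: together they carry $k(2k+1)$ edges, which equals $\binom{2k+1}{2} = |E(K_m)|$, so edge-disjointness is equivalent to covering every edge exactly once. I would separate the edges incident with $\infty$ from those lying among $\{0, \ldots, 2k-1\}$. The edges at $\infty$ are immediate: as the endpoints of $P_0$ are $0$ and $k$, the cycle $H_r$ contributes exactly $\{\infty, r\}$ and $\{\infty, k+r\}$, and as $r$ ranges over $0, \ldots, k-1$ these are precisely the edges $\{\infty, j\}$ for all $j$, each once.

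The substantive step, and the one I expect to be the main obstacle, concerns the interior edges. Here I would use the difference method, classifying each edge $\{a, b\}$ by its difference $d = \min(a-b,\, b-a) \in \{1, \ldots, k\}$ reduced modulo $2k$. A direct computation with the vertices of $P_0$ — writing them in order as $a_0, a_1, \ldots, a_{2k-1}$, so that $a_{2j-1} = j$ and $a_{2j} = -j$ — shows that $P_0$ contains exactly one edge of difference $k$ and exactly two edges of each difference $d$ with $1 \le d \le k-1$, matching the fact that $K_{2k}$ has $k$ edges of difference $k$ and $2k$ edges of every smaller difference. The crux is to show that the rotates of these edges tile each difference class exactly once. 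For $d = k$, the single edge $\{p, p+k\}$ of $P_0$ has rotates $\{p+r, p+k+r\}$, $r = 0, \ldots, k-1$, whose first coordinates are $k$ consecutive residues, so they give all $k$ edges of the difference-$k$ matching. For $1 \le d \le k-1$, the key lemma is that the two edges of $P_0$ of difference $d$ can be written $\{u, u+d\}$ and $\{u+k, u+k+d\}$, with antipodal tails; granting this, the rotates of the two edges have tails filling the complementary arcs $\{u, \ldots, u+k-1\}$ and $\{u+k, \ldots, u+2k-1\}$, hence all residues, so each edge of difference $d$ is covered exactly once. Verifying this antipodality is the heart of the argument: it splits into the cases $d$ even and $d$ odd, where one identifies the edge of $P_0$ realizing signed difference $+d$ and the one realizing $-d$, computes their tails from the formulas $a_{2j-1}=j$, $a_{2j}=-j$, and checks that the two tails differ by exactly $k$ modulo $2k$. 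Assembling the three parts — the edges at $\infty$, the difference-$k$ matching, and the remaining difference classes — shows that every edge of $K_m$ is covered exactly once, which yields the desired decomposition into $k = \frac{m-1}{2}$ Hamilton cycles.
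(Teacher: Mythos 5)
Your proposal is correct: the zigzag path, the $k$ rotations fixing $\infty$, and the antipodality of the two edges in each difference class $1\le d\le k-1$ (which your formulas $a_{2j-1}=j$, $a_{2j}=-j$ do verify, in both parity cases) constitute the classical Walecki construction. The paper itself gives no proof — it cites the result from Lucas's book — and your argument is exactly the standard construction that citation refers to, so there is nothing to contrast.
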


\begin{lemma}\cite{Walecki}\label{wlackieven}
For all even $m \geq 4, K_{m}-F_m$ has an Hamilton cycle decomposition with prescribed cycles $\{C, \sigma\left(C\right)$, $ \sigma^{2}\left(C\right) \ldots, \sigma^{\frac{m-4}{2}}\left(C\right)\}$ for some permutation $\sigma$ of $\{0,1, \ldots, m-1\}$ where $C=\left(0,1, \ldots, m-1\right)$  and $E\left(F_m\right)=$ $\{\{0, m / 2\},\{i, m-i\}: 1 \leq i \leq(m / 2)-1\}$.
\end{lemma}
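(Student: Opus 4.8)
The statement is the classical even-order Walecki decomposition, so the plan is to exhibit the rotational scheme explicitly and verify its two defining properties. I would work on the vertex set $\mathbb{Z}_m$ and assign to each edge $\{a,b\}$ its \emph{length} $\min(|a-b|,\,m-|a-b|)\in\{1,\dots,m/2\}$, noting that $K_m$ has $m$ edges of each length $\ell<m/2$ and $m/2$ edges of length $m/2$. Two easy reductions cut down the work. First, Hamiltonicity is automatic: since $C=(0,1,\dots,m-1)$ is a Hamilton cycle and $\sigma$ is a permutation of the vertices, each image $\sigma^{t}(C)$ is again a Hamilton cycle, so nothing has to be checked there. Second, a counting argument reduces everything to edge-disjointness and avoidance of $F_m$: each of the $\frac{m-2}{2}$ cycles has $m$ edges, so if they are pairwise edge-disjoint and each misses $F_m$, their union lies in $K_m-F_m$ and has $\frac{m-2}{2}\cdot m=\binom{m}{2}-\frac{m}{2}=|E(K_m-F_m)|$ edges, hence equals $K_m-F_m$.

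The proof therefore splits into three tasks: choosing $\sigma$, proving that $C,\sigma(C),\dots,\sigma^{\frac{m-4}{2}}(C)$ are pairwise edge-disjoint, and proving each misses $F_m$. The last task is the easy one if $\sigma$ is chosen to preserve $F_m$ setwise. The key observation is that $F_m$ is precisely the \emph{reflection matching}, the set of edges joining $x$ to $-x\pmod m$, with the two fixed points $0$ and $m/2$ of $x\mapsto -x$ paired to one another; every edge of $F_m$ has length at least $2$. Since $C$ is built only from length-$1$ edges, $C$ itself is disjoint from $F_m$, and if $\sigma$ permutes the edges of $F_m$ among themselves then this disjointness propagates to every $\sigma^{t}(C)$ at once. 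The heart of the argument is edge-disjointness: I would track the images $\{\sigma^{t}(i),\sigma^{t}(i+1)\}$ of the edges of $C$ and show that distinct powers of $\sigma$ never yield the same unordered pair, equivalently that the orbit uses up each length class exactly the number of times prescribed by $K_m-F_m$.

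The hard part will be exhibiting one permutation $\sigma$ that does all of this uniformly in $m$ and carrying out the edge-disjointness bookkeeping; experience with such constructions suggests that the parity of $m/2$ (that is, whether $m\equiv0$ or $2\pmod4$) forces a small case split in how the length-$m/2$ edges and the middle-length edges are apportioned among the cycles. As a cross-check, and as an alternative that avoids guessing $\sigma$, one can instead deduce the even case from the odd case of Lemma \ref{wlackiodd}: start from Walecki's decomposition of $K_{m-1}$ into $\frac{m-2}{2}$ Hamilton cycles, adjoin a new vertex $\infty$, and expand one carefully chosen edge of each cycle into a path of length two through $\infty$. The edges removed by these expansions, together with the single edge at $\infty$ left unused, form a perfect matching of $K_m$; choosing the expanded edges appropriately and relabelling the vertices makes this matching equal to $F_m$ and organizes the resulting $\frac{m-2}{2}$ Hamilton cycles as the orbit of $C$ under a single permutation $\sigma$.
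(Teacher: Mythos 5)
This lemma is not proved in the paper at all; it is quoted from the literature (Walecki's construction, \cite{Walecki}, in the form used in \cite{odabas2020}), so your attempt has to stand on its own as a proof --- and it does not. Your preliminary reductions are correct and well chosen: each $\sigma^{t}(C)$ is automatically a Hamilton cycle; the count $\frac{m-2}{2}\cdot m=\binom{m}{2}-\frac{m}{2}$ shows that pairwise edge-disjointness plus avoidance of $F_m$ already forces the union to be all of $K_m-F_m$; and if $\sigma$ fixes $F_m$ setwise, then avoidance for $C$ (clear, since $C$ uses only length-$1$ edges while every edge of $F_m$ has length at least $2$) propagates to every $\sigma^{t}(C)$. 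But these reductions only demonstrate that the entire content of the lemma is the existence of a suitable $\sigma$, and that existence is exactly what you never establish. You write that ``the hard part will be exhibiting one permutation $\sigma$ \dots and carrying out the edge-disjointness bookkeeping''; that is not a step you may defer --- it \emph{is} the lemma. As written, no $\sigma$ is defined and no edge-disjointness is checked, so nothing has been proved.

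Your alternative route (adjoin a new vertex $\infty$ to Walecki's odd decomposition of $K_{m-1}$ from Lemma \ref{wlackiodd}) can in fact be completed, but the two claims you wave at are precisely the nontrivial ones. First, for the leftover edges to form a perfect matching, the edge you expand in each Hamilton cycle of $K_{m-1}$ must be chosen so that these $\frac{m-2}{2}$ edges are pairwise disjoint and avoid the endpoint of the single unused edge at $\infty$; writing the odd decomposition as the orbit of a zigzag cycle under the rotation $\rho$ of $\mathbb{Z}_{m-2}$ (with one extra fixed vertex $\infty'$), the natural choice is to expand, in each rotated cycle, its unique edge of difference $\frac{m-2}{2}$, and these expanded edges are then the rotates $\rho^{i}(e_0)$ of one fixed edge. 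Second --- and this is the part your sketch skips entirely --- you must verify that the resulting cycles of $K_m$ form the orbit of a \emph{single} permutation $\sigma$ (true with $\sigma$ the extension of $\rho$ fixing $\infty$ and $\infty'$, but only because the expanded edges were chosen as rotates of one another), and that after relabelling the base cycle as $(0,1,\dots,m-1)$ the leftover matching joins position $i$ to position $m-i$, i.e., equals $F_m$ exactly; this rests on the reflective symmetry of the zigzag starter and must be checked, not asserted by the phrase ``choosing the expanded edges appropriately.'' Until one of your two routes is carried out to the point of naming $\sigma$ (or the expanded edges) and verifying disjointness and the identity of the matching, the proof has a genuine gap.
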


Lemmata \ref{lemma1.7} and \ref{lemma2.8} show the existence of the $\left\{C_{m}^{r}, C_{2 m}^{s}\right\}$-factorization of the $C_{m}[2]$ and $(C\oplus F_{m})[2]$ for $r+s\in \{2,3\}$. They will be used to find a $\{(K_{2}^{*})^{r}, \vv{C}_{m}^{s}\}$-factorization of the $C_{m}^{*}[2]$ for $r\in \{0,2,4\}$, $r+s=4$ and a $\vv{C}_{2m}$-factorization of $C^{*}[2]\oplus F_m^{*}[2]$ where $C^{*}$ is the symmetric version of the $C$ defined in Lemma \ref{wlackieven}. Also, we use $\Gamma_m$ and $\Gamma_m^{*}$ to denote $C[2]\oplus F_m[2]$ and $C^{*}[2]\oplus F^{*}_m[2]$, respectively, for the rest of the paper.

\begin{lemma}\cite{odabas2020} \label{lemma1.7}
Let $m$ be an integer with $m \geq 3$. Then $C_{m}[2]$ has a $\left\{C_{m}^{r}, C_{2 m}^{s}\right\}$-factorization for nonnegative integers $r$ and $s$ with $r+s=2$ except when $m$ is odd and $r=2$, and except possibly when $m$ is even and $r=1$.
\end{lemma}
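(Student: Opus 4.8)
The plan is to encode $C_m[2]$ combinatorially and reduce the problem to a parity count. Label the vertices as $(i,a)$ with $i\in\{0,1,\dots,m-1\}$ (read modulo $m$) recording the vertex of $C_m$ and $a\in\{0,1\}$ recording the copy, so that the four edges joining the fibre $\{(i,0),(i,1)\}$ to the fibre $\{(i{+}1,0),(i{+}1,1)\}$ form a copy of $K_{2,2}$; call this the $i$-th \emph{gap}. Each $K_{2,2}$ splits into exactly two perfect matchings, the \emph{identity} matching $\{(i,a)(i{+}1,a)\}$ and the \emph{swap} matching $\{(i,a)(i{+}1,1{-}a)\}$. A 2-factor that uses one whole matching in every gap (call it \emph{nice}) is determined by the set $S\subseteq\{0,\dots,m-1\}$ of gaps carrying the swap matching, and a routine traversal, tracking how often the copy-coordinate is flipped on going once around, shows such a factor is two disjoint $m$-cycles when $|S|$ is even and a single $2m$-cycle when $|S|$ is odd.

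For the positive cases I would first dispose of $r\in\{0,2\}$ using the H\"aggkvist Lemma. Taking $G=C_m$ (so $n=m$) and $H=C_{2m}$, which is a single even cycle on $2m$ vertices, the lemma decomposes $C_m[2]$ into two copies of $C_{2m}$; this settles $(r,s)=(0,2)$ for every $m\ge 3$. Taking instead $H=2C_m$, whose components are even cycles precisely when $m$ is even, the lemma decomposes $C_m[2]$ into two $C_m$-factors and settles $(r,s)=(2,0)$ for even $m$; equivalently one may exhibit this nice factorization directly by giving one factor all identity matchings and the other all swap matchings (both have an even swap count when $m$ is even). For $(r,s)=(1,1)$ with $m$ odd I would use the nice factorization with $S=\emptyset$: the all-identity factor is two $m$-cycles, while its complement carries $m$ swaps and, $m$ being odd, is a single $2m$-cycle, as required.

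It remains to prove the stated impossibility for $(r,s)=(2,0)$ when $m$ is odd, and here the key input is a winding-number argument. Orienting a cycle of $C_m[2]$ and recording each step as $+1$ or $-1$ according to whether the first coordinate increases or decreases, the signed total must be $\equiv 0\pmod m$ and lies between $-m$ and $m$ for a cycle of length $m$; when $m$ is odd this forces the value $\pm m$, so all steps agree in sign and every $m$-cycle uses exactly one edge in each gap. Consequently any $C_m$-factor is nice (its two vertex-disjoint $m$-cycles occupy distinct copies in each gap, giving a matching there), and in a factorization into two $C_m$-factors each gap's four edges split into the identity and swap matchings, one for each factor. The two factors therefore have swap counts $|S|$ and $m-|S|$, which cannot both be even since $m$ is odd; this contradiction rules out $(r,s)=(2,0)$ for odd $m$.

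The genuine obstacle is the case left open in the statement, $m$ even with $(r,s)=(1,1)$. The parity computation above shows that a nice factorization can realise a $C_m$-factor together with a $C_{2m}$-factor only when $m$ is odd, so for even $m$ one is forced to use \emph{non-nice} 2-factors, whose $m$-cycles have zero net winding and hence double back within gaps; controlling such factors uniformly in $m$ is the delicate part. Indeed, already for the smallest case $C_4[2]\cong K_{4,4}$ one checks that every $C_4$-factor is a pair of complete $K_{2,2}$'s whose complement is again a pair of $C_4$'s, so no $C_8$ can ever appear — which is exactly why the statement only asserts that the even case \emph{possibly} fails.
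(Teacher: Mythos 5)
Your proof is correct, but note that the paper itself never proves this lemma: it is imported verbatim from \cite{odabas2020}, so the only internal point of comparison is the toolkit the paper sets up around it. Your positive cases run exactly along the lines that toolkit suggests: the H\"aggkvist Lemma (stated in the paper immediately before this one) with $H=C_{2m}$ settles $(r,s)=(0,2)$ for all $m$, with $H=2C_m$ it settles $(r,s)=(2,0)$ for even $m$, and your explicit identity/swap (\emph{nice}) factorization with $S=\emptyset$ settles $(r,s)=(1,1)$ for odd $m$. What you add beyond anything recorded in this paper is a proof of the stated exception: the winding-number argument showing that for odd $m$ every $m$-cycle of $C_m[2]$ has net winding $\pm m$, hence uses exactly one edge per gap, hence every $C_m$-factor is nice, so two edge-disjoint $C_m$-factors would have swap counts $|S|$ and $m-|S|$, which cannot both be even --- a clean and valid impossibility argument. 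One expository gap you should close: when you claim the net winding cannot be $0$, say explicitly that a sum of $m$ terms each $\pm 1$ has the same parity as $m$, so it cannot vanish when $m$ is odd; as written, the reader must supply this. Finally, your observation that $C_4[2]\cong K_{4,4}$ admits no $\{C_4^1,C_8^1\}$-factorization (any $C_4$-factor of $K_{4,4}$ is a pair of $K_{2,2}$'s whose complement is again two $K_{2,2}$'s) is correct and is a genuine strengthening: it shows the lemma's ``except possibly'' clause describes a real exception at least for $m=4$, something neither this paper nor the statement itself settles.
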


\begin{lemma}\cite{odabas2020} \label{lemma2.8}
Let $m \geq 4$ be an even integer and $\Gamma_m$ where $C=$ $(0,1, \ldots, \allowbreak m-1)$ is an $m$-cycle and $F_{m}$ is a $1$-factor of $K_{m}$ with $E\left(F_{m}\right)=$ $\{\{0, m / 2\},\{i, \allowbreak m-i\}: 1 \leq i \leq(m / 2)-1\}$. Then $\Gamma_m[2]$ has $a$
\begin{enumerate}
\item $C_{2 m}$-factorization,
\item $C_{m}$-factorization when $m \equiv 0 \pmod 4$, and
\item $\left\{C_{m}^{2}, C_{2 m}^{1}\right\}$-factorization when $m \equiv 2 \pmod 4$.
\end{enumerate}
\end{lemma}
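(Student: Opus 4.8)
The plan is to treat the graph to be factored as the $6$-regular graph $C[2]\oplus F_m[2]=(C\oplus F_m)[2]$ on the $2m$ vertices $(i,a)$ with $i$ taken modulo $m$ and $a\in\{0,1\}$, so that a factorization consists of three $2$-factors. Here $C[2]$ contributes all edges $\{(i,a),(i+1,b)\}$ and is $4$-regular, while $F_m[2]$ contributes, for each edge $\{u,v\}\in F_m$, the four edges $\{(u,a),(v,b)\}$. The first thing I would record is that $F_m[2]$ splits into two perfect matchings of the whole graph: the same-layer matching $M_A=\{\{(u,a),(v,a)\}:\{u,v\}\in F_m,\ a\in\{0,1\}\}$ and the opposite-layer matching $M_B=\{\{(u,a),(v,1-a)\}:\{u,v\}\in F_m,\ a\in\{0,1\}\}$.

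I would then build the three factors in the shape
\[
\Phi_1=M_A\oplus N_1,\qquad \Phi_2=M_B\oplus N_2,\qquad \Phi_3=C[2]-N_1-N_2,
\]
where $N_1,N_2$ are two edge-disjoint perfect matchings chosen inside $C[2]$. Because $M_A,M_B,N_1,N_2$ are all perfect matchings, $\Phi_1$ and $\Phi_2$ are disjoint unions of even cycles, $\Phi_3$ is a $2$-factor, and the three sets partition the edge set of $(C\oplus F_m)[2]$. The point of this arrangement is that the shape of $\Phi_3$ can be prescribed directly: since $C[2]$ is exactly $C_m[2]$, the Häggkvist Lemma decomposes it into two copies of any $2$-regular graph on $2m$ vertices all of whose components are even cycles; taking one copy to be $\Phi_3$ realizes it as either $2C_m$ or a single $C_{2m}$, and splitting the complementary copy into its two perfect matchings supplies $N_1$ and $N_2$. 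The shapes of $\Phi_1,\Phi_2$ are in turn governed by the cycle structure of the unions $M_A\oplus N_1$ and $M_B\oplus N_2$, and by how I split the complementary copy into $N_1,N_2$ I can steer each of them toward a single $2m$-cycle ($C_{2m}$) or toward two $m$-cycles ($2C_m$).

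The reason the recipe divides along $m\bmod 4$ becomes visible when I track the position classes $V_{\mathrm{even}}$ and $V_{\mathrm{odd}}$ of vertices whose first coordinate is even, resp.\ odd. Every edge of $C[2]$, and hence every $N_i$-edge, runs between $V_{\mathrm{even}}$ and $V_{\mathrm{odd}}$, whereas every edge of $M_A$ and $M_B$ stays inside one class, with the single exception of the blow-up of the special matching edge $\{0,m/2\}$: as $0$ is even and $m/2$ is even exactly when $m\equiv 0\pmod 4$, those edges stay within a class when $m\equiv 0\pmod 4$ and cross between classes when $m\equiv 2\pmod 4$. When $m\equiv 0\pmod 4$ every cycle of $\Phi_1,\Phi_2$ must therefore have length divisible by $4$, which accommodates both $2C_m$ (yielding the $C_m$-factorization of part~(2)) and $C_{2m}$ (yielding part~(1)). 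When $m\equiv 2\pmod 4$ the two crossing edges remove this divisibility and are exactly what allow $\Phi_1,\Phi_2$ each to be a $2C_m$ carrying one special edge per cycle, with $\Phi_3$ taken as a single $C_{2m}$; this delivers the $\{C_m^2,C_{2m}^1\}$-factorization of part~(3). Part~(1) is obtained uniformly for all even $m$ by steering all three factors to single $2m$-cycles.

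I expect the real work to lie in the explicit choice of $N_1,N_2$ together with the verification that $M_A\oplus N_1$ and $M_B\oplus N_2$ have precisely the prescribed cycle type. This is a parity computation that genuinely depends on $m\bmod 4$ and, since $N_1,N_2$ are constrained to be the two halves of a single Häggkvist copy, is coupled to the chosen shape of $\Phi_3$; I would therefore expect to settle a few small base cases (notably $m=4$ and the least $m\equiv 2\pmod 4$) by exhibiting the factors directly before running the generic construction for the remaining $m$.
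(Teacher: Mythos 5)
This lemma is quoted by the paper from \cite{odabas2020} and is not proved in the paper at all, so your attempt has to be judged against what a complete proof must contain. Your framework itself is sound: splitting $F_m[2]$ into the same-layer matching $M_A$ and the cross-layer matching $M_B$, taking $N_1,N_2$ to be the two perfect matchings of one Häggkvist copy inside $C_m[2]$, and forming $\Phi_1=M_A\oplus N_1$, $\Phi_2=M_B\oplus N_2$, $\Phi_3=C_m[2]-N_1-N_2$ does yield three $2$-factors with even cycles partitioning the edge set of $\Gamma_m$, and your parity bookkeeping is correct (every cycle of $\Phi_1,\Phi_2$ has length divisible by $4$ when $m\equiv 0\pmod 4$, and the blown-up copies of $\{0,m/2\}$ are the only class-crossing matching edges when $m\equiv 2\pmod 4$).

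The genuine gap is that the argument stops exactly where the content of the lemma begins. The parity analysis shows only that the prescribed cycle types are \emph{not excluded}; it does not show they are \emph{achieved}. When $m\equiv 0\pmod 4$, nothing you wrote prevents $M_A\oplus N_1$ from being, say, $C_4\cup C_{2m-4}$ instead of $2C_m$ or $C_{2m}$: the claim that you can ``steer'' each of $\Phi_1,\Phi_2$ to the desired type by how the complementary Häggkvist copy is split into $N_1,N_2$ is asserted, never carried out, and this is the entire difficulty. A complete proof must exhibit $N_1,N_2$ explicitly for every even $m$ in each of the three cases and trace the cycles of $M_A\oplus N_1$ and $M_B\oplus N_2$ to confirm they are exactly $2C_m$ or exactly one $C_{2m}$; you explicitly defer this (``the real work''), and you also never give the generic construction that the deferred base cases were supposed to anchor. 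For what it is worth, the plan does look workable: taking $\Phi_3$ to be the two same-layer $m$-cycles, $N_1$ all cross edges between columns $2j$ and $2j+1$, and $N_2$ all cross edges between columns $2j+1$ and $2j+2$, one can check for $m=4$ and $m=8$ that $M_A\oplus N_1\cong M_B\oplus N_2\cong 2C_m$; but until such a verification is done for all even $m$, and likewise for the $C_{2m}$ targets of part (1) and the mixed type of part (3), what you have is a framework plus a consistency check, not a proof of the lemma.
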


\begin{lemma}\cite{Sajna2014} \label{lemma2}
 Let $m \geq 4$ be an even integer and $x$ be a positive integer. Then, $K_{(\frac{mx}{2}:2)}^{*}$ has a $\vv{C}_{m}$-factorization.
\end{lemma}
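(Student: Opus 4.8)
The plan is to give a single construction over the integers modulo $n$, where $n=\frac{mx}{2}$, that works uniformly for all parities of $n$. Write $\mu=\frac m2\ge 2$, so that $n=\mu x$, and identify the two parts of $K_{(n:2)}^*$ with $U=\{u_i\}$ and $V=\{v_j\}$ indexed by $\{0,1,\dots,n-1\}$ under addition modulo $n$. Every arc of $K_{(n:2)}^*$ joins $U$ to $V$, so a directed $m$-cycle must alternate between the parts and therefore visits exactly $\frac m2=\mu$ vertices of each part.

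First I would record the reformulation that drives everything. A directed cycle factor of $K_{(n:2)}^*$ is exactly a pair of permutations $(\sigma,\tau)$ of $\{0,\dots,n-1\}$, where the factor sends $u_i\to v_{\sigma(i)}$ and $v_j\to u_{\tau(j)}$. Following such a factor, the induced map on the $U$-indices is $\tau\circ\sigma$, and each directed cycle has length twice the length of the corresponding cycle of $\tau\circ\sigma$; hence the factor consists solely of $\vv{C}_m$'s precisely when every cycle of $\tau\circ\sigma$ has length $\mu$. A $\vv{C}_m$-factorization is then a family $(\sigma_a,\tau_a)$, indexed by $a\in\{0,\dots,n-1\}$, such that the arrays $\bigl(\sigma_a(i)\bigr)$ and $\bigl(\tau_a(j)\bigr)$ are each Latin squares (so that the $U\to V$ and $V\to U$ arcs are each partitioned), and every $\tau_a\circ\sigma_a$ is a product of $x$ cycles of length $\mu$.

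Next I would trivialize the Latin-square conditions by taking all permutations to be translations. Setting $\sigma_a(i)=i+a$ makes the $\sigma_a$ a sharply transitive set, so the $U\to V$ arcs are covered exactly once, and setting $\tau_a(j)=j+h(a)$ for any bijection $h$ of $\{0,\dots,n-1\}$ does the same for the $V\to U$ arcs. Then $\tau_a\circ\sigma_a$ is translation by $a+h(a)$, all of whose cycles have the common length $n/\gcd(n,a+h(a))$. The whole problem thus collapses to choosing a bijection $h$ with $a+h(a)\in S$ for every $a$, where $S=\{s:\gcd(n,s)=x\}$ is precisely the set of shifts of order $\mu$; note $|S|=\varphi(\mu)\ge 1$.

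The main obstacle is exactly the existence of this bijection $h$, and I would settle it by a matching argument. Form the bipartite graph on two copies of $\{0,\dots,n-1\}$ in which $a$ is joined to $b$ whenever $a+b\in S$; since the map $b\mapsto a+b$ is a bijection, the neighbourhood of $a$ is a translate of $S$ and hence has size $|S|=\varphi(\mu)\ge 1$, so the graph is regular and nonempty and therefore has a perfect matching by Hall's theorem, and that matching is the desired $h$. The fact that this step imposes no condition on $n$ is precisely what lets the directed problem succeed for odd $n$ as well, in contrast with the undirected case, where Theorem \ref{liu} yields a $C_m$-factorization of $K_{(n:2)}$ only for even $n$ and even then fails at the triple $(n,2,m)=(6,2,6)$; one could, for even $n$, instead lift such an undirected factorization through Proposition \ref{lemma1.3}, but the translation-plus-matching construction handles every case at once and is what I would present.
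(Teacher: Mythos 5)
Your proof is correct, but it necessarily takes a different route from the paper, because the paper contains no proof of Lemma \ref{lemma2} at all: the result is imported verbatim from Burgess and Sajna \cite{Sajna2014}. What you have written is therefore a self-contained replacement for that citation, and it holds up. Writing $n=\frac{mx}{2}$ and $\mu=\frac{m}{2}$, your reduction of a directed $2$-factor of $K^*_{(n:2)}$ to a pair of permutations $(\sigma,\tau)$ of $\mathbb{Z}_n$, of the arc-partition condition to two Latin squares, and of the whole problem (after taking translations) to finding a bijection $h$ of $\mathbb{Z}_n$ with $a+h(a)$ of order $\mu$ for every $a$ is sound, and the Hall's-theorem step is valid since the auxiliary bipartite graph is $\varphi(\mu)$-regular with $\varphi(\mu)\geq 1$. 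One simplification is worth noting: the matching argument is overkill, because for any fixed $s_0$ with $\gcd(n,s_0)=x$ the map $h(a)=s_0-a$ is already a bijection of $\mathbb{Z}_n$ satisfying $a+h(a)=s_0$ identically, so every factor can be written down explicitly as $\{u_i\to v_{i+a},\ v_j\to u_{j+s_0-a}\}$ with a single generating translation $s_0$. Your closing comparison is also the right one to make: lifting an undirected factorization through Theorem \ref{liu} and Proposition \ref{lemma1.3} only reaches even $n$ and stumbles on the exception $(x,y,m)=(6,2,6)$, whereas the direct construction handles every $n=\frac{mx}{2}$ uniformly (it even extends to $m=2$, where $s_0=0$ yields double arcs), which is exactly why the directed lemma, unlike its undirected counterpart, has no exceptional cases.
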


\begin{theorem}\cite{Bennett1997} \label{c3thm}
The complete symmetric equipartite digraph $K_{(x:y)}^*$ has a $\vv{C}_{3}$-factorization if and only if $3 | x y$ and $(x, y) \neq(1,6)$ with possible exceptions $(x, y)=(x, 6)$, where $x\notin \{m:m \text{ is divisible by a prime less than} \,\ 17\}$.
\end{theorem}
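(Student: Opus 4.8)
The plan is to split the argument into necessity and sufficiency, and to read a $\vv{C}_3$-factorization of $K_{(x:y)}^*$ as a resolvable directed group divisible design of group type $x^y$. For necessity I would count twice: a single $\vv{C}_3$-factor partitions the $xy$ vertices into directed triangles, forcing $3\mid xy$; and since every vertex of $K_{(x:y)}^*$ has out-degree $x(y-1)$ while each $\vv{C}_3$-factor contributes out-degree exactly $1$ at every vertex, any factorization must consist of precisely $x(y-1)$ factors. Note that this leaves \emph{no} parity obstruction, in contrast to the undirected setting where $x(y-1)$ must be even. The genuine sporadic failure at $(x,y)=(1,6)$, namely $K_6^*$, I would dispose of by hand, as it is exactly the nonexistence of a resolvable Mendelsohn triple system of order $6$, which a short counting/exhaustive argument rules out.

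For sufficiency I would first harvest everything available from the undirected theory. Whenever $x(y-1)$ is even, a $C_3$-factorization of $K_{(x:y)}$ exists (by Theorem~\ref{liu} for $x\geq 2$, and via Kirkman triple systems when $x=1$ and $y\equiv 3\pmod 6$, away from the listed small exceptions); applying Proposition~\ref{lemma1.3} produces a $C_3^*$-factorization of $K_{(x:y)}^*$. Each $C_3^*$-factor is a vertex-disjoint union of symmetric triangles, and each symmetric triangle splits canonically into its two coherently oriented directed triangles, so one $C_3^*$-factor yields two $\vv{C}_3$-factors. This settles every case with $x(y-1)$ even. The only genuinely directed regime that remains is $x$ odd and $y$ even, where the undirected object simply does not exist and the directed factorization must be built from scratch.

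To handle the $x$-odd, $y$-even cases I would run the standard recursive machinery for resolvable designs in its directed form: Wilson-type inflation (weighting the points of a master transversal or group divisible design by $x$, or by a prime factor of $x$, using small directed-triangle frame ingredients), combined with the ``filling in groups'' step that completes each inflated group by a factorization of a smaller $K_{(\cdot:\cdot)}^*$ obtained by induction on $y$ and on $x$. A handful of directly constructed base designs on few groups seed the recursion, and once the ingredients are in place the arithmetic requirements collapse to the existence of the relevant transversal designs. Throughout, I would track resolvability so that the parallel classes assemble into honest spanning $\vv{C}_3$-factors.

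The hard part is the family $y=6$. Six groups is exactly where the product and transversal-design ingredients give out: the required frame on six groups is produced by multiplying designs of small prime order, and this covers every $x$ divisible by some prime below $17$ but leaves the cases where all prime factors of $x$ are $\geq 17$ undecided. This is precisely the origin of the stated possible exceptions $(x,6)$ with $x$ divisible by no prime less than $17$, and I would expect to leave them open for the same structural reason — the absence of a suitable six-group ingredient — rather than from any deficiency of the arithmetic conditions.
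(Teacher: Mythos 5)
There is nothing in the paper to compare your proposal against: Theorem~\ref{c3thm} is not proved in this paper at all, but imported verbatim from \cite{Bennett1997}, where it is the main result of a separate research paper on resolvable Mendelsohn triple systems with equal-sized holes. Measured against that literature, your outline reconstructs the overall strategy plausibly (counting for necessity, doubling undirected factorizations when $x(y-1)$ is even, frame/inflation recursions otherwise, with the $y=6$ possible exceptions traced to missing six-group ingredients). But it is a research program, not a proof, and even the parts you do spell out have concrete holes.

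First, your ``harvest'' step fails silently at the exceptions of Theorem~\ref{liu}: the triples $(x,y)\in\{(2,3),(6,3),(2,6)\}$ satisfy $3\mid xy$ and $x(y-1)$ even, yet $K_{(x:y)}$ has no $C_3$-factorization there, so there is nothing to double, and these cases need direct directed constructions you never supply. Note that this paper runs the dependence in exactly the opposite direction: Lemma~\ref{dliu} invokes Theorem~\ref{c3thm} (together with Lemma~\ref{lemma2}) precisely to repair those undirected exceptions, so the directed existence there is genuinely extra information, not something inheritable from the undirected theory. Second, the regime $x$ odd and $y$ even --- which is the actual content of \cite{Bennett1997} --- is dispatched by naming machinery (Wilson-type inflation, filling in groups) without exhibiting any ingredient: no base holey designs, no directed frames, no argument that resolvability survives the weighting step (it does not do so automatically; one needs resolvable or frame versions of every ingredient), and no verification of which transversal designs exist. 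The sentence ``the arithmetic requirements collapse to the existence of the relevant transversal designs'' is the theorem, not a proof of it. Two smaller points: the nonexistence at $(x,y)=(1,6)$ need not be redone by exhaustion, since it is exactly the exception $(m,k)=(3,2)$ of Theorem~\ref{OP}; and neither the statement as transcribed nor your argument treats $y=2$, where $3\mid xy$ can hold (e.g.\ $x=3$) yet a bipartite digraph contains no directed triangle at all, so the theorem tacitly requires $y\geq 3$ and any correct proof must respect that restriction.
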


The following theorem presents a solution for the Directed Hamilton-Waterloo problem for small even cycle factors. It will also help us in solving $\mathrm{HWP}^{*}(v; m^{r},$ $2m^{s})$ in Section $4$, when $m=4$.
\begin{theorem}\cite{biz2022}\label{ilk}
For nonnegative integers $r$ and $s$, $\mathrm{HWP}^{*}(v; m^{r}, n^{s})$ has a solution for $(m,n)\in \{(4,6),(4,8),(4,12),(4,16),(6,12),(8,16)\}$ if and only if $r+s=v-1$ and $\textnormal{lcm}(m,n)| v$.
\end{theorem}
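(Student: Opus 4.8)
The plan is to separate the two directions and to reduce the general case to a base case at $v=L:=\mathrm{lcm}(m,n)$ by a product construction. For necessity I would invoke Lemma \ref{necessary}: it gives $r+s=v-1$ in all cases, and for a genuinely mixed factorization ($r,s\geq 1$) it forces both $m\mid v$ and $n\mid v$, hence $L\mid v$. The pure endpoints $r=0$ or $s=0$ are directed uniform Oberwolfach problems $\mathrm{OP}^{*}(n^{v/n})$ and $\mathrm{OP}^{*}(m^{v/m})$; for these only one divisibility is intrinsically needed, but whenever $L\mid v$ one checks directly from Theorem \ref{OP} that none of the forbidden instances $(3,2),(4,1),(6,1)$ can occur for these six pairs, so the endpoints are solvable. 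Thus the real content is sufficiency: given $L\mid v$ and $r+s=v-1$, construct a $\{\vv{C}_m^{r},\vv{C}_n^{s}\}$-factorization of $K_v^{*}$.

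I would first establish the base cases $\mathrm{HWP}^{*}(L;m^{r},n^{s})$ for every admissible split $r+s=L-1$. Since $L\in\{8,12,16\}$ is small, this is a finite task: I would build the factors explicitly as orbits of a starter/difference system over a cyclic group (of order $L$, or of order $L-1$ with one fixed point), choosing the starters so that each orbit is a $\vv{C}_m$- or a $\vv{C}_n$-factor and so that the number of orbits of each type can be tuned to realize every pair $(r,s)$. Verifying that the chosen arc-differences cover each nonzero difference of the group exactly once is the routine but essential bookkeeping, and a handful of these small systems may need ad hoc or computer-assisted search.

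The larger cases $v=Lt$ with $t\geq 2$ I would obtain by a group construction. Partitioning $V(K_v^{*})$ into $t$ groups of size $L$, write
\[
K_v^{*}=\Bigl(\bigoplus_{i=1}^{t}K_L^{*(i)}\Bigr)\oplus K_{(L:t)}^{*},
\]
the intra-group complete symmetric digraphs together with the complete symmetric equipartite digraph between groups. The between-groups part contributes $L(t-1)$ spanning factors: using Proposition \ref{lemma1.3} with Liu's Theorem \ref{liu} (and Lemma \ref{lemma2} when $t=2$) I would give $K_{(L:t)}^{*}$ both a $\vv{C}_m$- and a $\vv{C}_n$-factorization, and more generally a mixed $\{\vv{C}_m^{a},\vv{C}_n^{b}\}$-factorization with $a+b=L(t-1)$. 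The intra-group parts contribute $L-1$ further spanning factors: solving $\mathrm{HWP}^{*}(L;m^{r_0},n^{s_0})$ with the \emph{same} split in every $K_L^{*(i)}$ and taking, at each index, the union of the $t$ group-factors yields a uniform $\vv{C}_m$- or $\vv{C}_n$-factor of $K_v^{*}$. Combining gives $r=a+r_0$, $s=b+s_0$ with $r+s=L(t-1)+(L-1)=v-1$, as required.

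The main obstacle is realizing \emph{every} admissible $(r,s)$ with no exceptions, which is what forces the clean ``if and only if''. The within-group knob $(r_0,s_0)$ is confined to $\{0,\dots,L-1\}$, so the remaining range must come from the equipartite knob $(a,b)$, i.e.\ from a genuinely mixed Hamilton--Waterloo factorization of $K_{(L:t)}^{*}$ for all $a+b=L(t-1)$, whereas Theorem \ref{liu} supplies only the two uniform endpoints. I would produce the intermediate mixtures by refining the decomposition---isolating sub-collections of groups, or applying the Häggkvist Lemma through the doubling identities for $C_m[2]$ and $\Gamma_m[2]$ (Lemmata \ref{lemma1.7}, \ref{lemma2.8}) to trade $m$-factors for $n$-factors two at a time in the $n=2m$ pairs $(4,8),(6,12),(8,16)$, and by a separate tailored construction for the irregular pair $(4,6)$ where $n\neq 2m$. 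Finally I would dispose of the small exceptional instances of Theorems \ref{OP}, \ref{liu}, and \ref{c3thm} by direct constructions, checking in each that the hypothesis $L\mid v$ keeps them out of range.
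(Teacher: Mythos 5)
A preliminary remark: the paper does not prove Theorem \ref{ilk} at all --- it is quoted from the authors' earlier work \cite{biz2022} --- so there is no in-paper proof to compare yours against, and your proposal must stand on its own. As a plan it follows the standard methodology of this literature (and of Sections 3--4 of this paper), but it has two genuine gaps. The first is in the necessity direction, which does not go through as you describe and in fact cannot: when $r=0$ the condition $\mathrm{lcm}(m,n)\mid v$ is simply not necessary. Concretely, $\mathrm{HWP}^{*}(18;4^{0},6^{17})$ is just $\mathrm{OP}^{*}(6^{3})$, which has a solution by Theorem \ref{OP}, yet $12\nmid 18$. Lemma \ref{necessary} forces $m\mid v$ only when $r>0$ and $n\mid v$ only when $s>0$, so the ``only if'' direction is valid only for $r,s\geq 1$. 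You sensed this (``only one divisibility is intrinsically needed'') but then addressed the endpoints by verifying their \emph{sufficiency}, which does not repair the necessity claim; a careful treatment must restrict the biconditional to $r,s\geq 1$ or make the implicit convention explicit.

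The second gap is in sufficiency, where you correctly identify the crux --- realizing every intermediate $(r,s)$, since the within-group knob covers only $r_0\in[0,L-1]$ while Liu-type results give only the two uniform endpoints for $K_{(L:t)}^{*}$ --- but do not close it. What is actually needed is a layering of the between-group arcs into spanning pieces, each independently switchable between all-$\vv{C}_m$ and all-$\vv{C}_n$: for even $t$ one can split $K_{(L:t)}^{*}$ into $t-1$ spanning unions of $K_{(L:2)}^{*}$'s via a $1$-factorization of $K_{t}$ and then apply Lemma \ref{lemma2}, but for odd $t$ no such $1$-factorization exists and a different resolvable structure is required (compare Theorem \ref{maintheorem2}, where the paper resorts to Kirkman triple systems and $C_{3}^{*}$-factors, which in turn demands new base lemmas such as Lemma \ref{lemma2.13} for the resulting equipartite pieces). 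Moreover, your proposed Häggkvist trading mechanism applies only to the pairs with $n=2m$, i.e.\ $(4,8)$, $(6,12)$, $(8,16)$; for $(4,6)$, $(4,12)$ and $(4,16)$ it gives nothing, and those are exactly the cases you defer to ``a separate tailored construction.'' Together with the base cases at $v=L$ being left to unspecified starter/computer search, the hardest part of the theorem remains open in your write-up.
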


Let $A$ be a finite additive group and let $S$ be a subset of $A$, where $S$ does not contain the identity of $A$. The Directed Cayley graph $\vv{X}(A ; S)$ on $A$ with connection set $S$ is a digraph with $V(\vv{X}(A ; S))=A$ and $E(\vv{X}(A ; S))=\{(x,y):x,y\in A, y-x\in S\}$.

Let $G$ be a digraph and $R(G)$ denote the digraph on the same vertex set as $G$ but the arcs are taken in opposite directions.

Let $m$ be an even integer and the vertex set of $K_{2m}^*$ be $\mathbb{Z}_{2m}$. Let $I_{2m}^*$ be a $K_2^*$-factor of $K_{2m}^*$ with $E\left(I_{2m}^*\right)=$ $\{(i, m +i)^*: 0 \leq i \leq m-1\}$ and define the bijective function $f:\mathbb{Z}_{2m} \rightarrow \mathbb{Z}_{2} \times \mathbb{Z}_{m}$ with $$
 f(i)= \begin{cases}
 (0, i) & \text { if } i < m, \\ (1, i-m) & \text { if }  i \geq m.\end{cases}
$$
Then, $E\left(I_{2m}^{*}\right)$ can be restated as a set $\Big\{\big((0, i), (1, i)\big)^*: 0 \leq i \leq m-1\Big\}$ on $\mathbb{Z}_{2} \times \mathbb{Z}_{m}$ using this bijective function. We will represent $C_{m}^{*}[2]$ and $C_{m}^{*}[2] \oplus I_{2m}^*$ as the directed Cayley graphs $\vv{X}\big(\mathbb{Z}_{2} \times \mathbb{Z}_{m}, S\big)$ and $\vv{X}\big(\mathbb{Z}_{2} \times \mathbb{Z}_{m},$ $S\cup \{(1,0)\}\big)$ where $S=\{(0,1), (1, 1), (0,-1), (1,-1)\}$.

Also, we define a factor $F_m^*$ as a $K_2^*$-factor of $K_{m}^*$ with $E\left(F_m^*\right)=$ $\{(0, m / 2)^*,$ $(i, m-i)^*: 1 \leq i \leq(m / 2)-1\}$. The arc set of $F_m^*$ which is denoted by $E\left(F^{*}_m\right)$, can be expressed as $\big\{\big((0,0), (0, m / 2)\big)^*,  \big((0, i), (0, m-i)\big)^*: 1 \leq i \leq(m/2)-1\big\}$ using above bijective function. So, $\Gamma_m$ can be represented as the directed Cayley graph $\vv{X}\big(\mathbb{Z}_{2} \times \mathbb{Z}_{m},$ $\bigcup_{i=1}^{(m/2)-1}\{(0, m-2i), (0, 2i-m), (1, m-2i), (1, 2i-m)\}\cup S \cup \{ (0, m/2), (0, -m/2), (1, m/2), (1, -m/2)\}\big)$.

Using Proposition \ref{lemma1.3}, Lemmata \ref{wlackiodd} and \ref{wlackieven}, we will obtain a $\{(C_{\frac{m}{2}}^{*}[2])^{\frac{m-2}{4}},$ $I_{m}^*\}$-factorization and a $\{(C_{\frac{m}{2}}^{*}[2])^{\frac{m-8}{4}},  I_{m}^*, \Gamma^{*}_\frac{m}{2}\}$-factorization of $K_{m}^{*}$ depending on whether $m\equiv 0 \,\ \text{or} \,\  2 \pmod 4$, then use these factorizations to obtain a $\{(K_{2}^{*})^{r},\vv{C}_{m}^{s}\}$-factorization of $K_{mx}^{*}$. Since, $I_{m}^*$ and $F_{\frac{m}{2}}^*$ do not contain any $m$-cycles, we will factorize $C_{\frac{m}{2}}^{*}[2]\oplus I_{m}^*$ and $\Gamma^{*}_\frac{m}{2}$ into $K_{2}^{*}$-factors and $\vv{C}_{m}$-factors. Furthermore, it is necessary to have a $\{(K_{2}^{*})^{r},\vv{C}_{2m}^{s}\}$-factorization of $C_{m}^{*}[2]$ in order to factorize $K_{mx}^{*}$ into $K_{2}^{*}$-factors and $\vv{C}_{m}$-factors.
\begin{lemma}\label{lemma4}
Let $m\geq 4$ be an integer. Then $C_{m}^{*}[2]$ has a $\{(K_{2}^{*})^{r},\vv{C}_{2m}^{s}\}$-factorization for $r\in \{0,2,4\}$ and $r+s=4$.
\end{lemma}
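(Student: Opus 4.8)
The plan is to reduce everything to a single undirected Hamilton-cycle decomposition of $C_m[2]$ and then to exploit, in two different ways, the fact that a cycle on $2m$ vertices has even length. First I would record the identity $C_m^*[2]=(C_m[2])^*$, which is immediate from the definitions of $G[k]$ and $G^*$: in both digraphs the arc set consists of all ordered pairs $(u_a,v_b)$ with $\{u,v\}\in E(C_m)$ and $a,b\in\{0,1\}$. Thus it suffices to understand $(C_m[2])^*$. Applying Lemma \ref{lemma1.7} with $r=0$ and $s=2$ (a case forbidden by none of its exceptions for any $m\geq 3$), I obtain a decomposition $C_m[2]=H_1\oplus H_2$ into two Hamilton cycles, each of length $2m$. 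Passing to symmetric digraphs gives $C_m^*[2]=H_1^*\oplus H_2^*$, where each $H_i^*$ is a symmetric $2m$-cycle.

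The key observation is that each symmetric $2m$-cycle $H_i^*$ admits two independent, self-contained refinements into a pair of directed $1$-factors. On the one hand, the arcs of $H_i^*$ pointing \emph{forward} along the cycle form a single directed $2m$-cycle and the \emph{backward} arcs form another, so $H_i^*$ splits into two $\vv{C}_{2m}$-factors. On the other hand, since $H_i$ has even length $2m$, its edge set splits into two perfect matchings $A_i\oplus B_i$; making each matching symmetric yields two $K_2^*$-factors, so $H_i^*$ splits into two $K_2^*$-factors (equivalently, apply Proposition \ref{lemma1.3} to the $1$-factorization of the cycle $H_i$). Choosing, for each of $H_1^*$ and $H_2^*$ independently, which refinement to use produces a $\{(K_2^*)^r,\vv{C}_{2m}^s\}$-factorization of $C_m^*[2]$: refining both by forward/backward orientation gives $(r,s)=(0,4)$; refining $H_1^*$ into matchings and $H_2^*$ by orientation gives $(r,s)=(2,2)$; and refining both into matchings gives $(r,s)=(4,0)$.

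This also explains why $r$ is restricted to the even values $\{0,2,4\}$: each undirected factor $H_i$ contributes a pair of directed factors of a single type. The point I would emphasise as the main subtlety is the provenance of the $K_2^*$-factors. It is tempting to try to produce them from undirected $C_m$-factors of $C_m[2]$, but this fails for odd $m$ on two counts---Lemma \ref{lemma1.7} excludes the $C_m$-only decomposition when $m$ is odd, and in any case an odd cycle has no perfect matching, so a copy of $C_m^*$ carries no $K_2^*$-factor. The construction above sidesteps this difficulty entirely by extracting \emph{both} the $\vv{C}_{2m}$-factors and the $K_2^*$-factors from the same Hamilton--Hamilton decomposition, using only that the ambient cycles have even length $2m$. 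Consequently the argument is uniform in the parity of $m$ and disposes of all $m\geq 4$ at once.
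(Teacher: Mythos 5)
Your proposal is correct and follows essentially the same route as the paper: both decompose $C_m[2]$ into two Hamilton $2m$-cycles (the paper invokes the H\"aggkvist Lemma directly, you invoke Lemma~\ref{lemma1.7} with $r=0$, which is the same decomposition), then obtain $K_2^*$-factors by $1$-factorizing each even cycle and symmetrizing via Proposition~\ref{lemma1.3}, and obtain $\vv{C}_{2m}$-factors by splitting each symmetric $2m$-cycle into its forward and backward orientations. The only difference is organizational---per-cycle refinement choices versus the paper's three cases $r=0,2,4$---which yields the identical factorizations.
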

\begin{proof}
First, note that $C_m[2]$ has a decomposition into two $C_{2m}$-factors by Häggkvist Lemma and each $C_{2m}$-factor has a decomposition into two $1$-factors.

\textbf{Case 1} {\boldmath $(r=4)$} Decompose $C_m[2]$ into four $1$-factors by using $C_{2m}$-factors. Then a $K_{2}^{*}$-factorization of $C_{m}^{*}[2]$ is obtained by Proposition \ref{lemma1.3}.

\textbf{Case 2} {\boldmath $(r=2)$}  Decompose $C_m[2]$ into one $C_{2m}$ and two $1$-factors. By using Proposition \ref{lemma1.3}, we get a $\{(K_{2}^{*})^{2}, {C}_{2m}^*\}$-factorization of $C_{m}^{*}[2]$ and also $C_{2m}^{*}$ has a $\vv{C}_{2m}$-factorization with two $\vv{C}_{2m}$-factors. So, we obtain a $\{(K_{2}^{*})^{2},\vv{C}_{2m}^{2}\}$-factorization of $C_{m}^{*}[2]$.

\textbf{Case 3} {\boldmath $(r=0)$} Obtain a ${C}_{2m}^*$-factorization of $C_m^{*}[2]$ by Proposition \ref{lemma1.3}. Since $C_{2m}^{*}$ has a $\vv{C}_{2m}$-factorization with two $\vv{C}_{2m}$-factors, $C_{m}^{*}[2]$ has a $\vv{C}_{2m}$-factorization.
\end{proof}

Since $I_{2m}^*$ and $F_m^*$ are $K_2^*$-factors, the following result can be derived from Lemma \ref{lemma4}.
\begin{cor}\label{lemmagamma}
Let $m\geq 4$ be an even integer. Then $\Gamma^{*}_m$ has a $\{(K_{2}^{*})^{r}, \vv{C}_{2m}^{s}\}$-factorization for $r\in\{0,2,4,6\}$ with $r+s=6$.
\end{cor}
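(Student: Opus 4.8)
The plan is to split $\Gamma_m^* = C_m^*[2]\oplus F_m^*[2]$ into its two natural arc-disjoint pieces and treat them with the tools already in hand, handling the extreme case $r=0$ by a separate argument. First I would record that $F_m^*[2]$ decomposes into exactly two $K_2^*$-factors. Since $F_m$ is a $1$-factor of $K_m$, the blow-up $F_m[2]$ is a vertex-disjoint union of $m/2$ copies of $K_{2,2}\cong C_4$; each such $4$-cycle is $2$-regular and splits into two perfect matchings, so $F_m[2]$ has a $1$-factorization into two $1$-factors. Applying Proposition \ref{lemma1.3} to $F_m[2]$ then yields a $K_2^*$-factorization of $(F_m[2])^*=F_m^*[2]$ into two $K_2^*$-factors; concretely, each component $K_2^*[2]=K_{2,2}^*$ splits into the $K_2^*$-factors $\{(u_0,v_0)^*,(u_1,v_1)^*\}$ and $\{(u_0,v_1)^*,(u_1,v_0)^*\}$.

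For $r\in\{2,4,6\}$ I would then invoke Lemma \ref{lemma4} with parameter $r-2\in\{0,2,4\}$, which provides a $\{(K_2^*)^{r-2},\vv{C}_{2m}^{6-r}\}$-factorization of $C_m^*[2]$ (note $(r-2)+(6-r)=4$). Because $C_m^*[2]$ and $F_m^*[2]$ are arc-disjoint summands of $\Gamma_m^*$, I can append the two $K_2^*$-factors coming from $F_m^*[2]$ to obtain the desired $\{(K_2^*)^{r},\vv{C}_{2m}^{6-r}\}$-factorization of $\Gamma_m^*$.

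The case $r=0$ is the main obstacle, and it resists the above bookkeeping: the factors extracted from $F_m^*[2]$ are built from $4$-cycles and can never be promoted to $\vv{C}_{2m}$-factors while $F_m^*[2]$ is kept separate (for $m\ge 4$ we have $2m\ge 8>4$). To produce six $\vv{C}_{2m}$-factors one must instead merge the two pieces into long cycles at the undirected level. Here I would use the identities $(G_1\oplus G_2)^*=G_1^*\oplus G_2^*$ and $(G[2])^*=G^*[2]$, which together give $\Gamma_m^*=(\Gamma_m)^*$ for the undirected graph $\Gamma_m=C_m[2]\oplus F_m[2]$. By Lemma \ref{lemma2.8}(1), $\Gamma_m$ admits a $C_{2m}$-factorization into three $C_{2m}$-factors, and Proposition \ref{lemma1.3} lifts this to a $C_{2m}^*$-factorization of $\Gamma_m^*$ into three symmetric $C_{2m}^*$-factors.

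Finally, each symmetric $2m$-cycle is the arc-disjoint union of the two oppositely oriented directed $2m$-cycles on the same vertices, so each $C_{2m}^*$-factor splits into two $\vv{C}_{2m}$-factors. This produces six $\vv{C}_{2m}$-factors, i.e.\ a $\{(K_2^*)^{0},\vv{C}_{2m}^{6}\}$-factorization, completing the case $r=0$ and hence the corollary. The two places where care is genuinely needed are the reduction $\Gamma_m^*=(\Gamma_m)^*$ via the blow-up/symmetrization identities, and the realization that $r=0$ cannot be obtained by the simple piece-by-piece combination but must route through the nontrivial undirected Hamilton decomposition of Lemma \ref{lemma2.8}(1).
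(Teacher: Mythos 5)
Your proposal is correct and follows essentially the same route as the paper: for $r\in\{2,4,6\}$ both arguments split off the two $K_2^*$-factors of $F_m^*[2]$ and apply Lemma \ref{lemma4} to $C_m^*[2]$, and for $r=0$ both obtain the $\vv{C}_{2m}$-factorization of $\Gamma^{*}_m$ from Lemma \ref{lemma2.8}(1) combined with Proposition \ref{lemma1.3}. The only difference is that you make explicit some details the paper leaves implicit, namely the $C_4$-structure of $F_m[2]$ and the splitting of each symmetric $C_{2m}^*$-factor into two oppositely oriented directed $2m$-cycle factors.
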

\begin{proof}
$F_m^*[2]$ decomposes into two $K_{2}^{*}$-factors. So, $\Gamma^{*}_m$ has a $\{(K_{2}^{*})^{r}, \vv{C}_{2m}^{s}\}$-factorization for $r\in\{2,4,6\}$ with $r+s=6$ by Lemma \ref{lemma4}. Also, $\Gamma^{*}_m$ has a $\vv{C}_{2m}$-factorization by Lemma \ref{lemma2.8} and Proposition \ref{lemma1.3}.
\end{proof}

The following lemma is quite useful in solving the Directed Hamilton-Waterloo Problem for  $n=2$ and even $m$ when the values of $r$ are even.

\begin{lemma}\label{lemma2.6}
Let $m\geq 5$ be an integer. Then $C_{m}^{*}[2] \oplus I_{2m}^*$ has a $\{(K_{2}^{*})^{r},\vv{C}_{2m}^{s}\}$-factorization for $r\in \{0,1,3,5\}$ and $r+s=5$.
\end{lemma}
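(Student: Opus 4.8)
The plan is to realize $C_m^*[2]\oplus I_{2m}^*$ as a $5$-diregular digraph on $2m$ vertices and to split its arc set into five spanning $1$-diregular subdigraphs, each either a $K_2^*$-factor or a $\vv{C}_{2m}$-factor; note that on $2m$ vertices a $\vv{C}_{2m}$-factor is simply a directed Hamilton cycle. The cases $r\in\{1,3,5\}$ are immediate from Lemma \ref{lemma4}. Applying that lemma to $C_m^*[2]$ with $r'=r-1\in\{0,2,4\}$ yields a $\{(K_2^*)^{r-1},\vv{C}_{2m}^{5-r}\}$-factorization of $C_m^*[2]$, and since $I_{2m}^*$ is itself a $K_2^*$-factor on the remaining arcs, adjoining it as one further $K_2^*$-factor produces the desired $\{(K_2^*)^{r},\vv{C}_{2m}^{5-r}\}$-factorization of $C_m^*[2]\oplus I_{2m}^*$.

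The substantive case is $r=0$, where all five factors must be directed Hamilton cycles and the matching $I_{2m}^*$ can no longer be kept intact: its $m$ digons must be broken up and distributed among the cycles. I would work in the Cayley representation $\vv{X}(\mathbb{Z}_2\times\mathbb{Z}_m,\,S\cup\{(1,0)\})$ with $S=\{(0,1),(1,1),(0,-1),(1,-1)\}$ from the excerpt, and aim to exhibit five arc-disjoint directed Hamilton cycles covering all $10m$ arcs. The basic building block is a boustrophedon cycle that traverses one $\mathbb{Z}_m$-column using the generator $(0,1)$, crosses to the other column via $(1,0)$ (or via a diagonal generator $(1,\pm1)$), returns along the second column, and crosses back; such a cycle is Hamiltonian for every $m$ and mixes a column generator with a crossing generator, which is precisely what is needed to absorb the arcs of $I_{2m}^*$.

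The main obstacle is the exact arc-disjoint packing in this case. A single generator of $S\cup\{(1,0)\}$ rarely yields a Hamilton cycle: $(0,\pm1)$ splits into two $m$-cycles, $(1,0)$ into $m$ digons, and $(1,\pm1)$ is Hamiltonian only when $m$ is odd; moreover one must avoid creating short cycles from the reverse pair $(0,1)/(0,-1)$ or from the order-two generator $(1,0)$. I therefore expect the construction to split according to the parity of $m$, and to require careful bookkeeping so that the five cycles use each generator's $2m$ arcs exactly once. An equivalent and perhaps cleaner route is to first peel off two $\vv{C}_{2m}$-factors using the $r=2$ case of Lemma \ref{lemma4}, leaving a symmetric cubic digraph $H^*\oplus I_{2m}^*$, where $H$ is a Hamilton cycle of $C_m[2]$ supplied by Häggkvist's Lemma and the two $K_2^*$-factors reassemble into $H^*$; the task then reduces to decomposing this symmetric cubic digraph into three directed Hamilton cycles, which can be arranged by choosing $H$ so that $H\cup I_{2m}$ is a convenient circulant. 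The hypothesis $m\geq 5$ (with $m=4$ handled separately via the small-cycle results such as Theorem \ref{ilk}) reflects that these explicit constructions need enough room to close up into genuine Hamilton cycles.
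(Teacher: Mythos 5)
Your handling of $r\in\{1,3,5\}$ is exactly the paper's: apply Lemma \ref{lemma4} to $C_m^*[2]$ and adjoin $I_{2m}^*$ as one extra $K_2^*$-factor. The problem is the case $r=0$, which you yourself identify as the substantive one: your proposal never actually proves it. You describe the Cayley representation, a boustrophedon-type building block, the obstacles (short cycles from $(0,\pm1)$ and $(1,0)$, the parity dependence on $m$), and then state that you ``expect the construction to split according to the parity of $m$'' with ``careful bookkeeping.'' That is a plan, not a proof. The entire content of the paper's proof of this lemma is precisely that bookkeeping: for odd $m\geq 5$ and for even $m\geq 6$ separately, it writes down five explicit arc-disjoint directed $2m$-cycles $\vv{C}_{2m}^{(0)},\dots,\vv{C}_{2m}^{(4)}$ (the fifth as the complement of the first four) and thereby exhibits the $\vv{C}_{2m}$-factorization. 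Without such explicit cycles, or some other verifiable argument, the lemma is not established for $r=0$.

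Your proposed ``cleaner route'' does not close the gap either, and in fact leans on a claim that is false in general. Peeling off two $\vv{C}_{2m}$-factors via the $r=2$ case of Lemma \ref{lemma4} does leave $H^*\oplus I_{2m}^*$ with $H$ a Hamilton cycle of $C_m[2]$, i.e.\ the symmetric digraph of a cubic graph (Hamilton cycle plus perfect matching). But symmetric digraphs of such cubic graphs need not decompose into three directed Hamilton cycles: $K_4^*=C_4^*\oplus F_4^*$ is exactly of this form and admits no directed Hamilton cycle decomposition, which is why $(m,k)=(4,1)$ is an exception in Theorem \ref{OP} and why $(m,r,v)=(4,0,4)$ is excluded in Theorem \ref{main}. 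So ``choosing $H$ so that $H\cup I_{2m}$ is a convenient circulant'' is not a reduction to anything known; you would still have to construct the three directed Hamilton cycles by hand, which is the same difficulty you started with. The statement you are asked to prove is genuinely constructive, and the construction is what is missing.
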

\begin{proof} The cases $r\in\{1,3,5\}$ can be directly obtained from Lemma \ref{lemma4}.

When $r=0$, we will examine the problem in two cases; $m$ is odd or even.

\textbf{Case 1 (odd} {\boldmath $m\geq 5$\textbf{)}} 

Define five directed $2m$-cycles in $C_{m}^{*}[2] \oplus I_{2m}^*$ as follows:
$
\vv{C}_{2 m}^{(0)}=(v_{0}, v_{1}, \ldots,$ $ v_{2 m-1}) 
$
 where  $v_{i}=(\lfloor\frac{i}{m} \rfloor, i)$,
$
\vv{C}_{2 m}^{(1)}=\left(u_{0}, u_{1}, \ldots, u_{2 m-1}\right) \text { where } 
$
$$
 u_{2 i}= \begin{cases}(0,2 i) & \text { if } 0 \leq i \leq \frac{m-1}{2}, \\ (0,-2 i-1) & \text { if } \quad \frac{m+1}{2} \leq i \leq m-1,\end{cases} 
 $$
and
 $$
 u_{2i+1}= \begin{cases}(1,2 i+1) & \text { if } 0 \leq i \leq \frac{m-3}{2}, \\ (1,-2i-2) & \text { if } \quad \frac{m-1}{2} \leq i \leq m-1.\end{cases} 
$$ $C_{2 m}^{(2)}=\left(x_{0,}, x_{1}, \ldots, x_{2 m-1}\right) $ where
$$
x_{i}= \begin{cases}(0, m-\lfloor\frac{i}{2}\rfloor) & \text { if } i \equiv 0,3 \pmod 4 \\ (1, m-\lfloor\frac{i}{2}\rfloor) & \text { if } i\equiv1,2 \pmod 4\end{cases} \,\, \text {for} \,\, 0 \leq i \leq 2 m-3,
$$
and $x_{2 m-2}=(1,1)$, $x_{2 m-1}=(0,1)$. Also, 
$
C_{2 m}^{(3)}=\left(y_{0,}, y_{1}, \ldots, y_{2 m-1}\right)  
$
where
$$
y_i=u_i+(1,2)\,\ \text {for} \,\  0 \leq i \leq m-3 \,\ and \,\ m+2 \leq i \leq 2m-1,
$$
$$
y_{m-2}=(1,0), \,\ y_{m-1}=(0,1), \,\  y_{m}=(1,1), \,\ y_{m+1}=(0,0).
$$

Finally, $\vv{C}_{2 m}^{(4)}=(C_{m}^{*}[2] \oplus I_{2m}^*)-\bigoplus_{i=0}^3 \vv{C}_{2 m}^{(i)}$. Then, $\{\vv{C}_{2 m}^{(0)}, \vv{C}_{2 m}^{(1)},$ $\vv{C}_{2 m}^{(2)}, \vv{C}_{2 m}^{(3)}, \allowbreak \vv{C}_{2 m}^{(4)}\}$ is a $\vv{C}_{2 m}$-factorization of $C_{m}^{*}[2] \oplus I_{2m}^*$.

\textbf{Case 2 (even} {\boldmath $m\geq 6$\textbf{)}} 

Let $\vv{C}_{2 m}^{(0)}$ be the same as in Case 1 and define the directed $2m$-cycles in $C_{m}^{*}[2] \oplus I_{2m}^*$ as follows:\\
$\vv{C}_{2m}^{(1)}=\left(x_{0}, x_{1}, \ldots, x_{2m-1}\right)$ where $x_{0}=(0,0)$ and 
$$
x_{i}=\begin{cases}
\left(0, m-\left\lfloor \frac{i+2}{2}\right\rfloor\right) & \text {\,\ if } i \equiv 1,2 \pmod 4 \\
(1, m-\lfloor\frac{i+2}{2}\rfloor+1) & \text { if } i \equiv 0,3 \pmod 4 
\end{cases} \,\, \text { for } \,\, 1 \leq i \leq 2 m-8,
$$
and $x_{2 m-6+2 i}=(0,3-i) \text { for } 0 \leq i \leq 2$ and $x_{2 m-7+2 i}=(1,3-i) \text { for } 0 \leq i \leq 3$. Also, $\vec{C}_{2 m}^{(2)}=\left(u_{0}, u_{1}, \ldots, u_{2 m-1}\right)$ where $u_{0}=(0,0), \,\ u_{1}=(1,0), \,\ u_{2}=(0, m-1)$ and 
$$
u_{i}=\left\{\begin{array}{lll}
\left(0, m-\lfloor \frac{i-1}{2}\rfloor-1\right) & \text { if }  i \equiv 0,1 \pmod4 \\
\left(1, m-\lfloor  \frac{i-1}{2}\rfloor\right) & \text { if }  i \equiv 2,3 \pmod4
\end{array} \quad \right.  \,\, \text { for } \,\, 3 \leq i \leq 2m-9,
$$
$
u_{2 m-8+j}=\left\{\begin{array}{l}
(0,4-\lfloor\frac{j}{2}\rfloor) \text { if } j \equiv 0,2\pmod4 \\
(1,4-\lfloor\frac{j}{2}\rfloor) \text { if } j\equiv 1,3 \pmod4 
\end{array}\right.  \,\, \text { for } \,\, 0 \leq j \leq 7,
$
and when $m=6$, $u_3=(1,5)$ and we only use above piecewise function. $\vv{C}_{2m}^{(3)}=\left(y_{0}, y_{1}, \ldots, y_{2m-1}\right)$ where $y_{2 i+2}=(0, m-i)$ for $ 1 \leq i \leq m-4$, $y_{2 i+1}=(1, m-i)$ for $ 1 \leq i \leq m-3$, $y_{0}=(0,0)$, $y_{1}=(1,1)$, $y_{2}=(1,0)$, $y_{2 m-4}=(1,2)$, $y_{2 m-3}=(0,3)$, $y_{2 m-2}=(0,2)$ and $y_{2 m-1}=(0,1)$. $\vec{C}_{2 m}^{(4)}=\left(z_0,z_1 \ldots, z_{2 m-1}\right)$ where $z_{9+2 i}=(0,4+i)$ for $1 \leq i \leq m-5$, $z_{10+2 i}=(1,4+i)$ for $0 \leq i \leq m-6$, $z_{0}=(0,0)$, $z_{1}=(1, m-1)$, $z_{2}=(1,0)$, $z_{3}=(0,1)$, $z_{4}=(1,2)$, $z_5=(1,1)$, $z_{6}=(0,2)$, $z_{7}=(1,3)$, $z_{8}=(0,4)$, $z_{9}=(0,3)$. Then $\left\{\vv{C}_{2 m}^{(0)}, \vv{C}_{2 m}^{(1)},\vv{C}_{2 m}^{(2)}, \vv{C}_{2 m}^{(3)}, \vv{C}_{2 m}^{(4)}\right\}$ is a $\vv{C}_{2 m}$-factorization of $C_{m}^{*}[2] \oplus I_{2m}^*$.
\end{proof}

By Lemma \ref{lemma1.7}, we can decompose $C_{m}[2]$ into two ${C}_{m}$-factors for even $m$. So, we obtain the following lemma similar to Lemma \ref{lemma4}. Also, the following Corollaries are obtained as a result of this lemma.
\begin{lemma}\label{lemma2.4}
Let $m\geq 4$ be an even integer. Then $C_{m}^{*}[2]$ has a  $\{(K_{2}^{*})^{r}, \vv{C}_{m}^{s}\}$-factorization for $r\in \{0,2,4\}$ with $r+s=4$.
\end{lemma}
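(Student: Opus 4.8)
The plan is to mimic the proof of Lemma~\ref{lemma4} almost verbatim, replacing the role of Häggkvist's decomposition into $C_{2m}$-factors with the decomposition of $C_m[2]$ into two $C_m$-factors. The crucial input, already isolated in the statement's preamble, is that by Lemma~\ref{lemma1.7} (with $r=2$, $s=0$) the graph $C_m[2]$ admits a $\{C_m^2\}$-factorization, i.e.\ it decomposes into two edge-disjoint $C_m$-factors whenever $m$ is even. Each such undirected $C_m$-factor is a disjoint union of (undirected) $m$-cycles, and a single $m$-cycle $C_m$ (being $2$-regular) further decomposes into two $1$-factors. Thus $C_m[2]$ decomposes into four $1$-factors, and more flexibly into $j$ copies of $C_m$-factors together with $2(2-j)$ copies of $1$-factors for $j\in\{0,1,2\}$.

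With this in hand I would split into the three cases $r=4$, $r=2$, $r=0$ exactly as in Lemma~\ref{lemma4}. For $r=4$: decompose $C_m[2]$ into four $1$-factors, then apply Proposition~\ref{lemma1.3} to each $1$-factor to obtain four $K_2^*$-factors of $C_m^*[2]$, giving the $\{(K_2^*)^4\}$-factorization. For $r=2$: decompose $C_m[2]$ into one $C_m$-factor and two $1$-factors; Proposition~\ref{lemma1.3} turns the two $1$-factors into two $K_2^*$-factors and the $C_m$-factor into a $C_m^*$-factor of $C_m^*[2]$. Since each symmetric cycle $C_m^*$ splits into two directed $m$-cycles $\vv{C}_m$ (traversing the cycle in the two opposite orientations), the $C_m^*$-factor yields two $\vv{C}_m$-factors, producing a $\{(K_2^*)^2,\vv{C}_m^2\}$-factorization. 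For $r=0$: apply Proposition~\ref{lemma1.3} to the full $\{C_m^2\}$-factorization to get a $C_m^*$-factorization of $C_m^*[2]$ into two $C_m^*$-factors, each of which splits into two $\vv{C}_m$-factors by orienting oppositely, yielding a $\vv{C}_m$-factorization with four factors.

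The only point requiring a touch of care is the orientation step: converting a symmetric $m$-cycle $C_m^*$ into two directed $m$-cycles. For a single cycle this is clear—the two arc-sets obtained by orienting all edges consistently clockwise and consistently counterclockwise are arc-disjoint, together exhaust $E(C_m^*)$, and each is a directed $m$-cycle. For a $C_m^*$-factor (a spanning union of such symmetric cycles) one performs this splitting on every component simultaneously, so the two resulting arc-sets are each spanning unions of directed $m$-cycles, i.e.\ genuine $\vv{C}_m$-factors. I would state this observation once and reuse it in Cases~$r=2$ and $r=0$.

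I expect no substantive obstacle: the lemma is a direct analogue of Lemma~\ref{lemma4}, differing only in that the even-$m$ hypothesis lets us start from a $C_m$-factorization (Lemma~\ref{lemma1.7}) rather than a $C_{2m}$-factorization of $C_m[2]$. The main thing to verify is simply that Lemma~\ref{lemma1.7} indeed supplies the $r=2$ (two $C_m$-factors) case for even $m$, which it does since the excluded cases there are $m$ odd with $r=2$ and, only possibly, even $m$ with $r=1$; the value $r=2$ for even $m$ is unconditionally available. Given that, the three cases close immediately and cover exactly $r\in\{0,2,4\}$ with $r+s=4$, as claimed.
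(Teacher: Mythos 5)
Your proposal is correct and is exactly the argument the paper intends: the paper gives no separate proof for this lemma, stating only that it follows ``similar to Lemma~\ref{lemma4}'' once Lemma~\ref{lemma1.7} (with $r=2$, even $m$) supplies the decomposition of $C_m[2]$ into two $C_m$-factors. Your three cases, the use of Proposition~\ref{lemma1.3}, and the splitting of each symmetric $C_m^*$-factor into two oppositely oriented $\vv{C}_m$-factors fill in precisely the details the paper leaves implicit.
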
 

\begin{cor}\label{lemma2.5}
Let $m\geq 4$ be an even integer. Then $C_{m}^{*}[2]\oplus I_{2m}^*$ has a  $\{(K_{2}^{*})^{r}, \vv{C}_{m}^{s}\}$-factorization for $r\in \{1,3,5\}$ with $r+s=5$.
\end{cor}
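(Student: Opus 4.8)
The plan is to reduce the corollary to Lemma \ref{lemma2.4} by observing that $C_m^*[2]\oplus I_{2m}^*$ differs from $C_m^*[2]$ only by the adjunction of the single $K_2^*$-factor $I_{2m}^*$. Recall that, by the definition of $\oplus$, the digraphs $C_m^*[2]$ and $I_{2m}^*$ share the same vertex set and are arc-disjoint; indeed $I_{2m}^*$ consists of the double arcs $\big((0,i),(1,i)\big)^*$ joining the two copies of each vertex $i$, whereas every arc of $C_m^*[2]$ joins copies of two \emph{distinct} (cyclically adjacent) vertices. Since $I_{2m}^*$ is a $K_2^*$-factor of $C_m^*[2]\oplus I_{2m}^*$ by construction, it can serve as one extra $K_2^*$-factor on top of any factorization of $C_m^*[2]$.

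First I would apply Lemma \ref{lemma2.4} with the parameter shifted by one. For $r\in\{1,3,5\}$ we have $r-1\in\{0,2,4\}$, so Lemma \ref{lemma2.4} furnishes a $\{(K_2^*)^{r-1},\vv{C}_m^{s}\}$-factorization of $C_m^*[2]$ with $(r-1)+s=4$, that is, with $r+s=5$. This produces $r-1$ many $K_2^*$-factors and $s$ many $\vv{C}_m$-factors, all mutually arc-disjoint and spanning, which together partition the arc set of $C_m^*[2]$.

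Then I would simply adjoin $I_{2m}^*$ to this collection as an additional $K_2^*$-factor. The resulting family consists of $r$ many $K_2^*$-factors (the original $r-1$ together with $I_{2m}^*$) and $s$ many $\vv{C}_m$-factors, and it partitions the arc set of $C_m^*[2]\oplus I_{2m}^*$; hence it is the desired $\{(K_2^*)^{r},\vv{C}_m^{s}\}$-factorization with $r+s=5$. This mirrors exactly the way Corollary \ref{lemmagamma} was deduced from Lemma \ref{lemma4}. There is no genuine obstacle in the argument: the only content is the parameter bookkeeping $r-1\in\{0,2,4\}\Leftrightarrow r\in\{1,3,5\}$ together with the observation that $I_{2m}^*$, being a $K_2^*$-factor arc-disjoint from $C_m^*[2]$, contributes precisely one more $K_2^*$-factor.
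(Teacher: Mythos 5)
Your proposal is correct and is precisely the paper's (implicit) argument: the paper states this corollary as an immediate consequence of Lemma \ref{lemma2.4}, relying exactly on the observation that $I_{2m}^*$ is a $K_2^*$-factor arc-disjoint from $C_m^*[2]$, so adjoining it to a $\{(K_2^*)^{r-1},\vv{C}_m^{s}\}$-factorization of $C_m^*[2]$ shifts $r-1\in\{0,2,4\}$ to $r\in\{1,3,5\}$. Your write-up simply makes explicit the bookkeeping the paper leaves to the reader, in the same spirit as the deduction of Corollary \ref{lemmagamma} from Lemma \ref{lemma4}.
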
 

\begin{cor}\label{lemma3.6}
Let $m\geq 4$ be an even integer. Then $\Gamma^{*}_m$ has a  $\{(K_{2}^{*})^{r}, \vv{C}_{m}^{s}\}$-factorization for $r\in \{2,4,6\}$ with $r+s=6$.
\end{cor}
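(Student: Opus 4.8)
The plan is to mirror the proof of Corollary~\ref{lemmagamma}, replacing the role of Lemma~\ref{lemma4} (which produces $\vv{C}_{2m}$-factors) with Lemma~\ref{lemma2.4} (which produces $\vv{C}_{m}$-factors). Recall that by definition $\Gamma_m^* = C_m^*[2] \oplus F_m^*[2]$, so it suffices to factor the two summands compatibly and then merge the pieces. Since we only need $r\in\{2,4,6\}$ (never $r=0$), I would arrange that exactly two $K_2^*$-factors always come from $F_m^*[2]$, and the remaining $r-2\in\{0,2,4\}$ factors come from $C_m^*[2]$.

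First I would handle $F_m^*[2]$. Because $F_m$ is a $1$-factor of $K_m$, the graph $F_m[2]$ is $2$-regular and splits into $m/2$ vertex-disjoint copies of $C_4$, one for each edge $\{u,v\}$ of $F_m$ (namely $u_0 v_0 u_1 v_1$). Each such $C_4$ decomposes into two $1$-factors, so $F_m[2]$ has a $1$-factorization into two $1$-factors; applying Proposition~\ref{lemma1.3} then yields a decomposition of $F_m^*[2]$ into two $K_2^*$-factors.

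Next I would invoke Lemma~\ref{lemma2.4}, which for even $m\geq 4$ gives a $\{(K_2^*)^{r'},\vv{C}_m^{s'}\}$-factorization of $C_m^*[2]$ for every $r'\in\{0,2,4\}$ with $r'+s'=4$. Taking the union of these factors with the two $K_2^*$-factors extracted from $F_m^*[2]$ produces a $\{(K_2^*)^{r},\vv{C}_m^{s}\}$-factorization of $\Gamma_m^*$ with $r=r'+2\in\{2,4,6\}$ and $s=s'$, whence $r+s=(r'+s')+2=6$, as required.

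I do not anticipate a genuine obstacle: the result is essentially a direct corollary of Lemma~\ref{lemma2.4}, exactly as Corollary~\ref{lemmagamma} is of Lemma~\ref{lemma4}. The only point worth flagging is why $r=0$ must be excluded here (whereas it is allowed in Corollary~\ref{lemmagamma}): obtaining an all-$\vv{C}_m$ factorization would require absorbing $F_m^*[2]$ into directed $m$-cycles, which via Lemma~\ref{lemma2.8} is guaranteed only when $m\equiv 0\pmod 4$. The simple two-summand argument above never produces fewer than two $K_2^*$-factors, which is precisely why the statement is restricted to $r\in\{2,4,6\}$.
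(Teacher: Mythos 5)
Your proposal is correct and takes essentially the same route as the paper: there the corollary is derived exactly as Corollary~\ref{lemmagamma} is, by splitting $F_m^{*}[2]$ into two $K_2^{*}$-factors and applying Lemma~\ref{lemma2.4} to $C_m^{*}[2]$, so that $r=r'+2$ with $r'\in\{0,2,4\}$. Your explicit verification that $F_m[2]$ is a disjoint union of $4$-cycles (hence $1$-factorable) and your remark on why $r=0$ must be excluded are both consistent with the paper's treatment.
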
 

Theorem \ref{liu} states that $K_{(x: y)}$ has a $C_{m}$-factorization with a few exceptions. We will use this result to show that $K_{(x: y)}^*$ has a $\vv{C}_{m}$-factorization. However, some of the exceptions in the undirected version do not exist in the symmetric version. It is shown that there is actually a solution for these exceptions in the symmetric version. As a corollary of Lemma \ref{lemma2}, Proposition \ref{lemma1.3} and Theorems \ref{liu} and \ref{c3thm}, we can give the following result.

\begin{lemma}\label{dliu}
The complete symmetric equipartite digraph $K_{(x: y)}^*$ has a $\vv{C}_{m}$-factorization for $m \geq 3$ and $x \geq 2$ if $m\vert xy$, $x(y-1)$ is even, $m$ is even when $y=2$.
\end{lemma}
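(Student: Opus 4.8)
The plan is to obtain the bulk of the statement by transferring Liu's undirected result to the symmetric digraph, and then to clear up the handful of triples that Theorem \ref{liu} excludes but the present hypotheses admit. First I would observe that the hypotheses here---$m \mid xy$, $x(y-1)$ even, and $m$ even when $y=2$---are precisely those of Theorem \ref{liu} with the four sporadic triples $(x,y,m) \in \{(2,3,3),(6,3,3),(2,6,3),(6,2,6)\}$ removed. Hence, for every admissible $(x,y,m)$ outside this exceptional set, Theorem \ref{liu} furnishes a $C_m$-factorization of $K_{(x:y)}$, and Proposition \ref{lemma1.3} immediately upgrades it to a $\vv{C}_m$-factorization of $K_{(x:y)}^*$. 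It then remains only to verify the four exceptional triples, each of which does satisfy the hypotheses of the lemma and therefore must be produced by other means.

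Three of the four exceptions concern $m=3$, namely $(x,y) \in \{(2,3),(6,3),(2,6)\}$. For these I would invoke Theorem \ref{c3thm}, which provides a $\vv{C}_3$-factorization of $K_{(x:y)}^*$ whenever $3 \mid xy$ and $(x,y) \neq (1,6)$, apart from possible exceptions of the form $(x,6)$ with $x$ divisible by no prime below $17$. Each of $(2,3)$, $(6,3)$, $(2,6)$ satisfies $3 \mid xy$ and differs from $(1,6)$; moreover $(2,6)$ escapes the residual exception clause because $x=2$ is divisible by the prime $2<17$. Thus Theorem \ref{c3thm} settles all three $m=3$ cases directly in the symmetric setting---the essential point being that the symmetric versions of these configurations are less constrained than their undirected counterparts.

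The remaining triple is $(x,y,m)=(6,2,6)$, for which I would apply Lemma \ref{lemma2}. Taking $m=6$ and $x=2$ there gives $\tfrac{mx}{2}=6$, so Lemma \ref{lemma2} yields a $\vv{C}_6$-factorization of $K_{(6:2)}^*$, which is exactly the object needed. With all four exceptional triples disposed of, the lemma follows. I do not expect a substantial obstacle in this argument: the work is entirely the bookkeeping of checking that each triple excluded by Theorem \ref{liu} is recovered by the appropriate auxiliary result. The only genuinely delicate point is matching $(2,6)$ against the exception clause of Theorem \ref{c3thm}, where one must note that the clause excludes only those $(x,6)$ whose first coordinate avoids every small prime, a condition that $x=2$ plainly violates.
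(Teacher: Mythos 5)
Your proposal is correct and follows essentially the same route as the paper: Theorem \ref{liu} plus Proposition \ref{lemma1.3} for all admissible triples outside the four exceptions, and Theorem \ref{c3thm} (for the three $m=3$ triples) together with Lemma \ref{lemma2} (for $(6,2,6)$) to dispose of the exceptions---indeed you check the exceptional cases in more detail than the paper does. The only tiny elision is that Proposition \ref{lemma1.3} yields a $C_m^{*}$-factorization rather than directly a $\vv{C}_m$-factorization; one must add, as the paper does explicitly, that each $C_m^{*}$-factor splits into two $\vv{C}_m$-factors by taking the two orientations of each symmetric cycle.
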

\begin{proof}
Let $m\vert xy$, $x(y-1)$ be even, $m$ be even when $y=2$, and let $(x, y, m) \notin \{(2,3,3),(6,3,3),(2,6,3), (6,2,6)\}$. By Theorem \ref{liu}, $K_{(x: y)}$ has a $C_{m}$-factori-zation and so, $K_{(x: y)}^{*}$ has a $C_{m}^*$-factorization by Proposition \ref{lemma1.3}. Since each $C_{m}^*$ has a $\vv{C}_{m}$-factorization, $K_{(x: y)}^{*}$ has a $\vv{C}_{m}$-factorization. Since $K_{(x: y)}^*$ has a $\vv{C}_{m}$-factorization for $(x, y, m) \in \{(2,3,3),(6,3,3),(2,6,3), (6,2,6)\}$ by Theorem \ref{c3thm} and Lemma \ref{lemma2}, we conclude that $K_{(x: y)}^{*}$ has a $\vv{C}_{m}$-factorization for $m \geq 3$ and $x \geq 2$ if $m\vert xy$, $x(y-1)$ is even, $m$ is even when $y=2$.
\end{proof}

Recall that $\Gamma^{*}_m$ is $C^{*}[2]\oplus F_m^{*}[2]$.
\begin{lemma}\label{lemma3.10}
$\Gamma^{*}_m$ has a $\{(K_{2}^{*})^{r}, \vv{C}_{m}^{s}\}$-factorization for $m\equiv 2 \pmod 4$ and $r\in\{1,2,3,4,6\}$ with $r+s=6$.
\end{lemma}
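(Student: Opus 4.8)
Throughout I write $\Gamma^*_m=C_m^*[2]\oplus F_m^*[2]$. The cases $r\in\{2,4,6\}$ are already covered by Corollary \ref{lemma3.6}, so the whole content of the lemma lies in producing the two \emph{odd} values $r\in\{1,3\}$. The fact that $r=5$ and $r=0$ are deliberately absent is already a warning that no off-the-shelf reduction will give the odd cases for free.

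The natural first attempt is to peel off one $K_2^*$-factor. Since $F_m^*[2]$ is a disjoint union of $\frac{m}{2}$ copies of $K_{2,2}^*$, it splits as $F_m^*[2]=M_1\oplus M_2$ into two $K_2^*$-factors, and one would like to factorize $C_m^*[2]\oplus M_1$ by Corollary \ref{lemma2.5} and then adjoin $M_2$ as a lone $K_2^*$-factor. This fails on parity: Corollary \ref{lemma2.5} only delivers an \emph{odd} number $r''\in\{1,3,5\}$ of $K_2^*$-factors, so $r''+1$ is always even and merely reproduces Corollary \ref{lemma3.6}. To reach an odd total I would instead need such a factorization with an \emph{even} value $r''\in\{0,2\}$, and this cannot be obtained via Proposition \ref{lemma1.3}: the digraph $C_m^*[2]\oplus M_1$ is the symmetric orientation of a $5$-regular graph, an odd-regular graph admits no $2$-factorization, and so no undirected $C_m$-factorization can be lifted. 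Hence the odd cases genuinely require a direct construction inside the digraph.

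The plan is therefore to discard the easy part and reduce to a small even-regular core. Writing $C_m^*[2]$ as the union of its within-layer part (a disjoint union of two symmetric $m$-cycles $C_m^*$) and its between-layer part (a symmetric $2m$-cycle $C_{2m}^*$), the within-layer part supplies two ready-made $\vv{C}_m$-factors at no cost. It then suffices to factorize the $4$-regular digraph $C_{2m}^*\oplus F_m^*[2]$: for $r=1$ into three $\vv{C}_m$-factors together with one $K_2^*$-factor, and for $r=3$ into one $\vv{C}_m$-factor together with three $K_2^*$-factors. Adjoining the two within-layer factors then yields $\{(K_2^*)^1,\vv{C}_m^5\}$ and $\{(K_2^*)^3,\vv{C}_m^3\}$, respectively. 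In both targets the required directed $m$-cycles must interleave arcs of the between-layer cycle $C_{2m}^*$ with arcs of $F_m^*[2]$, and this is exactly where the hypothesis $m\equiv 2\pmod 4$ (equivalently $\frac{m}{2}$ odd) enters: it is what forces these alternating cycles to close up at length $m$ rather than $2m$.

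The main obstacle is this explicit construction. One must write down, on the vertex set $\mathbb{Z}_2\times\mathbb{Z}_m$, the directed $m$-cycles of each factor by closed piecewise formulas and verify that they are vertex-disjoint, span, and partition the arc set of $C_{2m}^*\oplus F_m^*[2]$, with the leftover arcs forming the prescribed number of $K_2^*$-factors; for $r=3$ this amounts to checking that the complement of the single $\vv{C}_m$-factor is a symmetric even cycle, hence decomposes into two $K_2^*$-factors. I would model this on the explicit $\vv{C}_{2m}$-construction of Lemma \ref{lemma2.6}, adapted to produce cycles of length $m$, and I expect the smallest case $m=6$ to need separate, ad hoc cycles, just as it does there.
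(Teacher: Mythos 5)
Your reduction of the even cases to Corollary \ref{lemma3.6} and your focus on $r\in\{1,3\}$ agree with the paper, and your $r=1$ plan (two within-layer $\vv{C}_m$-factors plus a decomposition of the remaining arcs into three $\vv{C}_m$-factors and one $K_2^*$-factor) is essentially the shape of the paper's construction: there the lone $K_2^*$-factor is the within-layer half of $F_m^*[2]$, namely $F_0\cong F_m^*\oplus(F_m^*+(1,0))$, and three explicit $\vv{C}_m$-factors $F_3,F_4,F_5$ exhaust the diagonal arcs of $C_m^*[2]$ together with the cross arcs of $F_m^*[2]$. However, you defer those three factors to ``closed piecewise formulas modeled on Lemma \ref{lemma2.6}''; since that explicit construction is the entire difficulty of the lemma, even your $r=1$ case is not complete.

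More seriously, your $r=3$ plan is impossible as stated. Your core (the between-layer arcs of $C_m^*[2]$ together with all of $F_m^*[2]$) is a symmetric digraph, and every $K_2^*$-factor is symmetric; hence after deleting three $K_2^*$-factors from it, the remainder is a symmetric digraph in which every vertex has in-degree and out-degree $1$, and such a digraph is forced to be a disjoint union of digons, i.e.\ yet another $K_2^*$-factor --- never a $\vv{C}_m$-factor for $m\geq 3$. (This is exactly the obstruction that explains why $r=5$ is absent from the statement; it applies verbatim to your core.) The paper avoids this trap: it keeps the core decomposition into three $\vv{C}_m$-factors identical for $r=1$ and $r=3$, and obtains the two additional $K_2^*$-factors from the \emph{within-layer} part of $C_m^*[2]$ --- since $F_2=R(F_1)$, the union $F_1\oplus F_2$ is a symmetric $C_m^*$-factor, which for even $m$ can be split into two $K_2^*$-factors instead of two $\vv{C}_m$-factors. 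Your plan forfeits precisely this flexibility by insisting that both within-layer factors be used as $\vv{C}_m$-factors in both cases. A secondary error: for even $m$ the between-layer edges of $C_m^*[2]$ form two disjoint symmetric $m$-cycles, not one $C_{2m}^*$ (the single $2m$-cycle arises only for odd $m$), so your heuristic that $m\equiv 2\pmod 4$ is what ``forces these alternating cycles to close up at length $m$'' is also off; closure at length $m$ needs only that $m$ be even, and the congruence condition is instead needed to make the mixed cycles such as $F_4$ and the leftover factor $F_5$ work out.
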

\begin{proof}
The cases $r\in\{2,4,6\}$ are obtained by Corollary \ref{lemma3.6}.

For $r=1$, we define the following $m$-cycles.
$\vv{C}_{m}^{(0)}=\left(v_{0}, v_{1}, \ldots v_{m-1}\right)$ where $v_{i} =(0, i)$ for $0 \leq i \leq m-1$.\\
$
\vv{C}_{m}^{(1)}=\left(u_{0}, u_{1}, \ldots, u_{m-1}\right) \text { where } u_{i}= \begin{cases}(0, i) & \text { if }  \text {i is even,} \\ (1, i) & \text { if } i \text { is odd.}\end{cases}
$
\\
$
\vv{C}_{m}^{(2)}=(x_{0}, x_{1}, \ldots x_{m-1} )\text { where } x_{0}=(0,0) \text { and for } 1 \leq i\leq m-1   
$
$$
x_{i=}\left\{\begin{array}{l}
\left(\frac{1-(-1)^{i}}{2}, \frac{m}{2}-\lfloor \frac{i}{2}\rfloor\right) \text { if } i \equiv 1, 2 \pmod4, \\
\left(\frac{1-(-1)^{i}}{2}, \frac{m}{2}+\lfloor \frac{i}{2}\rfloor\right) \text { if } i=0, 3\pmod4.
\end{array}\right.
$$

Let's choose the factor $F_0$ as isomorphic to $F_m^*\oplus(F_m^*+(1,0))$, then $F_0$ becomes a $K_{2}^{*}$-factor. Using the above $m$-cycles, we obtain the following $m$-cycle factors. $F_{1}=\vec{C}_{m}^{(0)} \cup(\vv{C}_{m}^{(0)}+(1,0))$, $F_{2}=R\left(F_{1}\right)$, $F_{3}=\vv{C}_{m}^{(1)} \cup R(\vv{C}_{m}^{(1)} +(1,0))$, $F_{4}=\vv{C}_{m}^{(2)} \cup R(\vv{C}_{m}^{(2)}+(1, 0))$ and $F_{5}=\Gamma^{*}_m-\bigoplus_{i=0}^{4} F_{i}$. Then, $\left\{F_{0}, F_{1}, F_{2}, F_{3}, F_{4},  F_{5}\right\}$ is a $\{(K_{2}^{*})^{1}, \vv{C}_{m}^{5}\}$-factorization of $\Gamma^{*}_m$.

For $r=3$, $F_{1} \oplus F_{2}$ is a $C_{m}^{*}$-factor of $\Gamma^{*}_m$ and has a factorization into two $K_{2}^*$-factors of $\Gamma^{*}_m$ say $F_{1}^{'}$ and $F_{2}^{'}$. Then $\left\{F_{0}, F_{1}^{'}, F_{2}^{'}, F_{3}, F_{4}, F_{5}\right\}$ is a $\{(K_{2}^*)^{3}, \vv{C}_{m}^{3}\}$-factorization of $\Gamma^{*}_m$.
\end{proof}
\section{Solutions to HWP$^{*}(v;2^r, m^s)$}
Now, we can give solutions to the Directed Hamilton-Waterloo Problem for $K_{2}^*$ and $\vv{C}_{m}$ when even $m$.

\begin{theorem}\label{maintheorem}
Let $r$, $s$ be nonnegative integers, and let $m\geq 6$ be even. Then, $\mathrm{HWP}^{*}(v; 2^{r}, m^{s})$ has a solution if and only if $m| v$, $r+s=v-1$ and $v\geq 6$ except for $s = 1$ and $(r, v)=(0, 6)$, and except possibly when at least one of the following conditions holds;
\begin{enumerate}
\item $s=3$ and $m\equiv 0 \pmod 4$,
\item $s=3$, $m\equiv 2 \pmod 4$ and $\frac{v}{m}$ is odd.
\end{enumerate}
\end{theorem}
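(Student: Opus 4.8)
The plan is to prove necessity directly and then realize the factorizations through the group decomposition
\[
K_{mx}^{*}=\Big(\bigoplus_{i=1}^{x}K_m^{*}\Big)\oplus K_{(m:x)}^{*},
\]
obtained by splitting the $mx$ vertices into $x$ groups of size $m$; the copies of $K_m^{*}$ will be decomposed identically so that their factors align into genuine factors of $K_{mx}^{*}$, while $K_{(m:x)}^{*}$ carries the between-group arcs. For necessity, $m\mid v$ and $r+s=v-1$ are Lemma \ref{necessary}, and $(r,v)=(0,6)$ forces $m=6$, $x=1$, i.e.\ $\mathrm{OP}^{*}(6^{1})$, excluded by Theorem \ref{OP}. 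For $s=1$ I would argue structurally: the $r=v-2$ copies of $K_2^{*}$ consist solely of double arcs, so their union is a symmetric subdigraph of $K_v^{*}$; its complement is then symmetric as well, yet it equals the lone $\vv{C}_m$-factor, a union of directed $m$-cycles, which is not symmetric — a contradiction. The same remark, that the union of all $s$ cycle-factors is forced to be symmetric, governs the delicate small-$s$ behaviour.

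For the base case $x=1$, partitioning a group into $m/2$ pairs gives $K_m^{*}=I_m^{*}\oplus K_{m/2}^{*}[2]$, and applying Walecki (Lemmata \ref{wlackiodd}, \ref{wlackieven}) together with Proposition \ref{lemma1.3} to $K_{m/2}$ yields
\[
K_m^{*}=I_m^{*}\oplus\bigoplus_{(m-2)/4} C_{m/2}^{*}[2]\ \text{ if } m\equiv 2,\qquad
K_m^{*}=I_m^{*}\oplus\Gamma^{*}_{m/2}\oplus\bigoplus_{(m-8)/4} C_{m/2}^{*}[2]\ \text{ if } m\equiv 0\pmod 4.
\]
Each block is split into $K_2^{*}$- and $\vv{C}_m$-factors by the catalogue already proved, read with parameter $m/2$ so that $\vv{C}_{2\cdot m/2}=\vv{C}_m$: Lemma \ref{lemma4} handles $C_{m/2}^{*}[2]$ ($r\in\{0,2,4\}$), Corollary \ref{lemmagamma} handles $\Gamma^{*}_{m/2}$ ($r\in\{0,2,4,6\}$), and absorbing $I_m^{*}$ into a cycle block via Lemma \ref{lemma2.6} supplies the odd counts of $C_{m/2}^{*}[2]\oplus I_m^{*}$ ($r\in\{0,1,3,5\}$), while the extremes $r=0$ and $r=m-1$ come from $\mathrm{OP}^{*}(m^{1})$ (Theorem \ref{OP}) and Proposition \ref{lemma3}. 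Summing block contributions shows that for $m\ge 10$ the $x=1$ case is solvable for every admissible $r$ except the two values corresponding to $s\in\{1,3\}$; the small orders $m\in\{6,8\}$, where the parametrized lemmas do not directly apply, I would settle by hand (consistently with the genuine failure of $\mathrm{OP}^{*}(6^{1})$).

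To pass to general $x$ I would make the between-group part flexible. Since $K_{(m:x)}^{*}=K_x^{*}[m]$, a $1$-factorization of $K_x$ (for even $x$) or a Hamilton decomposition (for odd $x$) blows up into vertex-disjoint copies of $K_{(m:2)}^{*}$, each of which admits both a $K_2^{*}$-factorization (Lemma \ref{dliu2}) and a $\vv{C}_m$-factorization (Lemma \ref{dliu}); choosing the type copy by copy lets $K_{(m:x)}^{*}$ contribute any prescribed number of $K_2^{*}$-factors in $\{0,\dots,m(x-1)\}$, of either parity. Combining this with the (mostly odd) reachable range of the $x=1$ case — which already contains $0$, $1$, and $m-1$ — then realizes every target $(r,s)$ with $s\neq 1$; in particular the between-group part repairs the parity gaps of the single groups, so the isolated difficulty at $m=8$ disappears as soon as $x\ge 2$. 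The cases $s=0$ and $r=0$ are handled outright by Proposition \ref{lemma3} and Theorem \ref{OP}.

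The hard part is $s=3$, and this is exactly where the possible exceptions originate. The symmetry constraint forces the three cycle-factors to have symmetric union, and the block catalogue produces an odd cycle-count only through a single $\Gamma^{*}$ or $I_m^{*}$ ingredient, so $s=3$ is not reachable in general. When $m\equiv2\pmod4$ and $x=v/m$ is even I would escape this by pairing the groups and working in the $x=2$ block $K_{2m}^{*}=I_{2m}^{*}\oplus\Gamma^{*}_m\oplus\bigoplus C_m^{*}[2]$, where Lemma \ref{lemma3.10} supplies $\Gamma^{*}_m$ with the odd values $r\in\{1,3\}$ (hence $s\in\{3,5\}$) and Lemma \ref{lemma2.4} handles the remaining cycle blocks; aligning the $x/2$ pairs and taking the between-super-group part $K_{(2m:x/2)}^{*}$ entirely as $K_2^{*}$ then yields the missing $s=3$ factorization. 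No analogous odd-$r$ decomposition of $\Gamma^{*}$ is available when $m\equiv0\pmod4$, and the groups cannot be paired cleanly when $x$ is odd, which is precisely why $s=3$ survives as a possible exception in those two regimes. The remaining technical labour is to pin down the exact reachable set of $r$ in each congruence class and to make the odd-$x$ between-group mixing fully rigorous.
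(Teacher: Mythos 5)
Your skeleton---necessity via Lemma \ref{necessary} and the symmetry argument for $s=1$, a block catalogue for $K_m^{*}$ built from $I_m^{*}\oplus K_{m/2}^{*}[2]$ via Lemmata \ref{lemma4}, \ref{lemma2.6} and Corollary \ref{lemmagamma}, and the $K_{2m}^{*}$/Lemma \ref{lemma3.10} repair for $s=3$ when $m\equiv 2\pmod 4$ and $v/m$ is even---parallels the paper. But the step that glues the groups together has a genuine gap. After writing $K_{mx}^{*}=\big(\bigoplus_{i=1}^{x}K_m^{*}\big)\oplus K_{(m:x)}^{*}$, you claim that inflating a $1$-factorization of $K_x$ lets $K_{(m:x)}^{*}$ contribute \emph{any} number of $K_2^{*}$-factors in $\{0,\dots,m(x-1)\}$, ``of either parity,'' by choosing the type copy by copy. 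This is false. Every factor in a solution must be uniform on all $mx$ vertices, so inside one inflated $1$-factor (a vertex-disjoint union of $x/2$ copies of $K_{(m:2)}^{*}$) all copies must realize the same type in the same position; and the available toolkit (Lemma \ref{dliu2}, Lemma \ref{lemma2}/\ref{dliu}) provides only \emph{pure} factorizations of $K_{(m:2)}^{*}$, no mixed $\{(K_2^{*})^{a},\vv{C}_m^{\,m-a}\}$-factorization. Hence each inflated $1$-factor contributes either $0$ or $m$ $K_2^{*}$-factors, the between-group contribution is a multiple of $m$ (in particular always even), and the reachable set is $\{km+r': 0\le k\le x-1,\ r'\in R\}$, where $R\subseteq\{0,\dots,m-1\}$ is your $x=1$ catalogue, which misses $m-2$ and $m-4$. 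So every $r\equiv m-2$ or $m-4\pmod m$ is unreachable; for $x\ge 2$ this includes, e.g., $r=m-2$, where $s=m(x-1)+1>3$, a case the theorem asserts is solvable. The claimed ``repair of parity gaps'' by the between-group part is exactly what your decomposition cannot deliver.

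This obstruction is precisely what the paper's choice of group size avoids: it collapses to $2x$ half-groups of size $m/2$, inflates a $K_2^{*}$-factorization of $K_{2x}^{*}$ (Proposition \ref{lemma3}), turns one matching into a $K_m^{*}$-factor (which carries the flexible block catalogue) and each of the other $2x-2$ matchings into a $K_{(m/2:2)}^{*}$-factor worth only $m/2$ factors. Granularity $m/2$, combined with the full-parity range of the $K_m^{*}$ block, reaches every $r$ except $v-2$ and $v-4$ (the latter then repaired as you describe when $m\equiv 2\pmod 4$ and $v/m$ is even). A secondary difference: for odd $r$ the paper does not use the block catalogue at all, but applies Lemma \ref{wlackieven} to $K_m$ itself, writing $K_m^{*}=F_m^{*}\oplus\bigoplus C_m^{*}$ with each $C_m^{*}$ splitting into two $K_2^{*}$- or two $\vv{C}_m$-factors; this covers all even $m\ge 6$, including $m=6$ and $m=8$ where your parametrized lemmas are vacuous (Lemma \ref{lemma2.6} needs its parameter at least $5$, and the count of free $C_{m/2}^{*}[2]$ blocks goes negative at $m=8$), so those orders need not be settled ``by hand.''
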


\begin{proof} 
Take $(v-2)$ disjoint $K_2^*$-factors of $K_v^*$, say $H_1^*,H_2^*,\dots, H_{v-2}^*$. It is obvious that $K_v^*-(H_1^*\oplus H_2^*\oplus\dots\oplus H_{v-2}^*)$ is a $K_2^*$-factor in $K_v^*$. Thus, there is no $\{(K_2^*)^{v-2},\vv{C}_m^1 \}$-factorization of $K_v^*$. So, we may assume $s \neq 1$.

Since $\mathrm{HWP}^{*}(v; n^{r}, m^{s})$ has a solution for $r=0$ except for $(v,m)=(6,6)$ by Theorem \ref{OP}, we may assume that $r\geq 1$.

Let $v=mx$ for a positive integer $x$. Partition the vertices of $K_{mx}^{*}$ into $2x$ sets of size $\frac{m}{2}$, represent each part of $\frac{m}{2}$ vertices in $K_{mx}^{*}$ with a single vertex and represent all double arcs between sets of size $\frac{m}{2}$ as a single double arc, to get a $K_{2x}^{*}$. By Proposition \ref{lemma3}, $K_{2x}^{*}$ has a decomposition into $(2x-1)$ $K_{2}^{*}$-factors. Then, construct a $K_{m}^{*}$-factor of $K^{*}_{mx}$ from one of the $K_{2}^{*}$-factors, and a $K_{(\frac{m}{2}:2)}^{*}$-factor of $K^{*}_{mx}$ from each of the remaining $(2x-2)$  $K_{2}^{*}$-factors. Then, $K^{*}_{mx}$ can be factorized into a $K_{m}^{*}$-factor and $(2x-2)$ $K_{(\frac{m}{2}:2)}^{*}$-factors.

$K_{(\frac{m}{2}:2)}^{*}$ decomposes into $\frac{m}{2}$ $K_{2}^{*}$-factors or $\frac{m}{2}$ $\vv{C}_{m}$-factors by Lemmata \ref{dliu2} and \ref{lemma2}, respectively. So, we must decompose $K_{m}^{*}$ into $K_{2}^{*}$-factors and $\vv{C}_{m}$-factors.

\textbf{Case 1 (odd \boldmath{$r$})} 

By Lemma \ref{wlackieven}, factorize $K_{m}$ into a $F_{m}$-factor and $(\frac{m-2}{2})$ $C_{m}$-factors. So, $K^{*}_{m}$ can be factorized into a $F_{m}^{*}$-factor and $(\frac{m-2}{2})$ $C_{m}^{*}$-factors by Proposition \ref{lemma1.3}.

Since $C_{m}^{*}$ can be decomposed into two $K_{2}^{*}$-factors or two $\vv{C}_{m}$-factors for even $m$, $K_{m}^{*}$ has a $\{(K_{2}^{*})^{2r_1+1},\vv{C}_{m}^{2s_1}\}$-factorization where $r_1+s_1=\frac{m-2}{2}$.

Since $K_{mx}^{*}$ has a $\{K_{m}^{*}, (K_{(\frac{m}{2}:2)}^{*})^{(2x-2)}\}$-factorization, placing a $K_{2}^{*}$-factoriz-ation on $r_0$ of the $K_{(\frac{m}{2}:2)}^{*}$ factors for $r_0$ even and $0\leq r_0\leq 2x-2$, a $\vv{C}_{m}$-factorization on $s_0$ of the $K_{(\frac{m}{2}:2)}^{*}$ where  $r_0+s_0=2x-2$, and taking a $\{(K_{2}^{*})^{2r_1+1},\vv{C}_{m}^{2s_1}\}$-factorization of $K_{m}^{*}$ give a $\{(K_{2}^{*})^{r},\vv{C}_{m}^{s}\}$-factorization of $K_{mx}^{*}$ where $r=\frac{m}{2}r_0+2r_1+1$ and $s=\frac{m}{2}s_0+2s_1$ with $r+s=\frac{m}{2}(r_0+s_0)+2(r_1+s_1)+1=mx-1=v-1$. 

Since any nonnegative odd integer $1\leq r\leq mx-1$ can be written as $r=\frac{m}{2}r_0+2r_1+1$ for integers $0\leq r_0\leq 2x-2$ and $0\leq r_1\leq \frac{m-2}{2}$, a solution to $\mathrm{HWP}^{*}(v; 2^{r}, m^{s})$ exists for each odd $r \geq 1$ and  $s \geq 1$ satisfying $r+s=mx-1=v-1$.

\textbf{Case 2 (even \boldmath{$r$})} 

\textbf{(a)} Assume $m\equiv 0 \pmod 4$. So, $\frac{m}{2}$ is even. Each $K_{(\frac{m}{2}:2)}^{*}$ decompose into $\frac{m}{2}$ $K_{2}^{*}$-factors or $\frac{m}{2}$ $\vv{C}_{m}$-factors. So, we need a $\{(K_{2}^{*})^{r}, \vv{C}_{m}^{s}\}$-factorization of $K_{m}^{*}$ for even $r$.

Also, $K_{\frac{m}{2}}^{*}$ can be factorized as $\bigoplus_{i=1}^{\frac{m-8}{4}} C^{*}_{i} \oplus \Gamma^{*}_{\frac{m}{2}}$ where each $C^{*}_{i}$ is isomorphic to $C_{\frac{m}{2}}^{*}$. Then, $K_{\frac{m}{2}}^{*}[2]  \cong  \bigoplus_{i=1}^{\frac{m-8}{4}} C^{*}_{i} [2] \oplus  \Gamma^{*}_{\frac{m}{2}}[2]  
$. Also, $K^{*}_{m}$ is isomorphic to $K_{\frac{m}{2}}^{*}[2] \oplus I_{m}^*$. Therefore, $K_{m}^{*}$ has a $\{(C_{\frac{m}{2}}^{*}[2])^{\frac{m-12}{4}},  C_{\frac{m}{2}}^{*}\oplus I_{m}^*,  \Gamma^{*}_{\frac{m}{2}}\}$-factorization. By Lemma \ref{lemma4}, each of $\frac{m-12}{4}$ $C_{\frac{m}{2}}^{*}[2]$-factors has a $\{(K_{2}^{*})^{r_0},\vv{C}_{m}^{s_0}\}$-factorization for $r_0\in \{0,2,4\}$ and $r_0+s_0=4$. By Lemma \ref{lemma2.6}, $C_{\frac{m}{2}}^{*}[2]\oplus I_{m}^*$ has a $\{(K_{2}^{*})^{r_1},\vv{C}_{m}^{s_1}\}$-factorization for $r_1\in \{0,1,3,5\}$ and $r_1+s_1=5$.
By Corollary \ref{lemmagamma}, $\Gamma^{*}_{\frac{m}{2}}$ has a  $\{(K_{2}^{*})^{r_2}, \vv{C}_{m}^{s_2}\}$-factorization for even $m$ and $r_2\in\{0,2,4,6\}$ with $r_2+s_2=6$. Those factorizations give a $\{(K_{2}^{*})^{r'}, \vv{C}_{m}^{s'}\}$-factorization of $K_{m}^{*}$ where $r'=(\frac{m-12}{4})r_0+r_1+r_2$ and $s'=(\frac{m-12}{4})s_0+s_1+s_2$ satisfying $r'+s'=(\frac{m-12}{4})4+5+6=m-1$ with $0\leq r', s' \leq m-1$. If we choose $r_1=0$ , we obtain a $\{(K_{2}^{*})^{r'}, \vv{C}_{m}^{s'}\}$-factorization of $K_{m}^{*}$ for even $r'$.

Placing a $K_{2}^{*}$-factorization on $r''$ of the $K_{(\frac{m}{2}:2)}^{*}$-factors for $0\leq r''\leq 2x-2$, a $\vv{C}_{m}$-factorization on $s''$ of the $K_{(\frac{m}{2}:2)}^{*}$ for $r''+s''=2x-2$, and taking a $\{(K_{2}^{*})^{r'}, \vv{C}_{m}^{s'}\}$-factorization of $K_{m}^{*}$ give a $\{(K_{2}^{*})^{\frac{m}{2}r''+r'},\vv{C}_{m}^{\frac{m}{2}r''+s'}\}$-factorization of $K_{mx}^{*}$ where $\frac{m}{2}r''+r'$ is even. It can be seen that $r'=m-4$ cannot be obtained for the possible values of $r_0,r_1$ and $r_2$ from the above factorizations.

Since any even integer $1\leq r\leq mx-1$ can be written as $r=\frac{m}{2}r''+r'$ except for $r=mx-4$ and for integers $r'\in[0, m-1]\backslash \{m-4\}$ and $0\leq r''\leq 2x-2$, a solution to $\mathrm{HWP}^{*}(v; 2^r, m^s)$ exists for each even $r \geq2$ except possibly $r=mx-4=v-4$ and $s \geq1$ satisfying $r+s=v-1$.

\textbf{(b)}  Assume $m\equiv 2 \pmod 4$. 

By Lemma \ref{wlackiodd}, factorize $K_{n}$ into $(\frac{n-1}{2})$ $C_{n}$-factors for odd $n$, and get a $C_{n}^{*}$-factorization of $K_{n}^{*}$ by Proposition \ref{lemma1.3}. Also, $K^{*}_{m}$ can be factorized as $K_{\frac{m}{2}}^{*}[2] \oplus I_{m}^*$. Since $\frac{m}{2}$ is odd, $K_{m}^{*}$ has a $\{(C_{\frac{m}{2}}^{*}[2])^{\frac{m-2}{4}},  I_{m}^*\}$-factorization. By Lemma \ref{lemma4}, each of $C_{\frac{m}{2}}^{*}[2]$-factors has a $\{(K_{2}^{*})^{r_0},\vv{C}_{m}^{s_0}\}$-factorization for $r_0\in \{0,2,4\}$ and $r_0+s_0=4$. By Lemma \ref{lemma2.6}, $C_{\frac{m}{2}}^{*}[2]\oplus I_{m}^*$ has $\{(K_{2}^{*})^{r_1},\vv{C}_{m}^{s_1}\}$-factorization for $r_1\in \{0,1,3,5\}$ and $r_1+s_1=5$.

Those factorizations give a $\{(K_{2}^{*})^{r_2}$, $ \vv{C}_{m}^{s_2}\}$-factorization of $K_{m}^{*}$ for $r_2=\frac{m-6}{4}r_0+r_1$ and $s_2=\frac{m-6}{4}s_0+s_1$ with $r_2+s_2=m-1$. 

Placing a $K_{2}^{*}$-factorization on $r'$ of the $K_{(\frac{m}{2}:2)}^{*}$ factors for $0\leq r'\leq 2x-2$ where we choose $r'$ is even, a $\vv{C}_{m}$-factorization on $s'$ of the $K_{(\frac{m}{2}:2)}^{*}$ with $r'+s'=2x-2$, and taking a $\{(K_{2}^{*})^{r_2}, \vv{C}_{m}^{s_2}\}$-factorization of $K_{m}^{*}$ give a $\{(K_{2}^{*})^{\frac{m}{2}r'+r_2},\vv{C}_{m}^{\frac{m}{2}s'+s_2}\}$-factorization of $K_{mx}^{*}$ where $r=\frac{m}{2}r'+r_2$ and $s=\frac{m}{2}s'+s_2$. Also, we obtain the requested even integer $ r\in [1, mx-1]$ except for $r=mx-4$, from the sum of $\frac{m}{2}r'$ and $r_2$ for integers $0\leq r'\leq 2x-2$ and $r_2\in [0, m-1]\backslash \{m-4\}$. So, a solution to $\mathrm{HWP}^{*}(v; 2^r, m^s)$ exists for even $r \geq 2$ except possibly $r=mx-4=v-4$ and  odd $s \geq 1$ satisfying $r+s=v-1$.

If $x$ is even, say $x=2t$, factorize $K^{*}_{mx}$ into a $K_{2m}^{*}$-factor and $(2t-2)$ $K_{(m:2)}^{*}$-factors. 
$K_{(m:2)}^{*}$ has a $K_{2}^{*}$-factorization with $m$ $K_{2}^{*}$-factors and a $\vv{C}_{m}$-factorization with $m$ $\vv{C}_{m}$-factors by Lemmata \ref{dliu2} and \ref{lemma2}, respectively. So, we must decompose $K_{2m}^{*}$ into $K_{2}^{*}$-factors and $\vv{C}_{m}$-factors. As before, $K_{2m}^{*}$ can be factorized as $K_{m}^{*}[2] \oplus I_{2m}^*$. So, $K_{2m}^{*}$ has a $\{(C_{m}^{*}[2])^{\frac{m-4}{2}}, I_{2m}^*, \Gamma^{*}_m\}$-factorization. By Lemma \ref{lemma2.4}, each of $C_{m}^{*}[2]$-factors has a $\{(K_{2}^{*})^{r_0},\vv{C}_{m}^{s_0}\}$-factorization for $r_0\in \{0,2,4\}$ and $r_0+s_0=4$. By Corollary \ref{lemma2.5}, $C_{m}^{*}[2]\oplus I_{2m}^*$ has a $\{(K_{2}^{*})^{r_1},\vv{C}_{m}^{s_1}\}$-factorization for $r_1\in \{1,3,5\}$ and $r_1+s_1=5$.
By Lemma \ref{lemma3.10}, $\Gamma^{*}_m$ has a $\{(K_{2}^{*})^{r_2}, \vv{C}_{m}^{s_2}\}$-factorization for $m\equiv 2 \pmod 4$ and $r_2\in\{1,2,3,4,6\}$ with $r_2+s_2=6$. Using these factorizations, we obtain a solution to the problem for $r=2mt-4=mx-4$ when $m\equiv 2 \pmod 4$ and even $x$. So, $\mathrm{HWP}^{*}(v; 2^r, m^s)$ has a solution for $r=v-4$ and even $\frac{v}{m}$ when $m\equiv 2 \pmod 4$.
\end{proof} 

\begin{lemma}\label{lemma5}
$C_{4}^{*}[2]\oplus I_8^*$ has a  $\{(K_{2}^{*})^{r}, \vv{C}_{4}^{s}\}$-factorization for $r\in \{0,1,2,3,5\}$ with $r+s=5$.
\end{lemma}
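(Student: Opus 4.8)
The plan is to work inside the directed Cayley graph representation set up earlier in Section~2, writing $C_{4}^{*}[2]\oplus I_{8}^{*}$ as $\vv{X}\big(\mathbb{Z}_{2}\times\mathbb{Z}_{4},\,S\cup\{(1,0)\}\big)$ with $S=\{(0,1),(0,-1),(1,1),(1,-1)\}$, and to read off factors directly from the connection set. Each of the four order‑$4$ generators $(0,1),(0,-1),(1,1),(1,-1)$ by itself spans two vertex‑disjoint directed $4$‑cycles, hence is a $\vv{C}_{4}$‑factor, while the involution $(1,0)$ yields exactly the $K_{2}^{*}$‑factor $I_{8}^{*}$. The odd values $r\in\{1,3,5\}$ are already delivered by Corollary~\ref{lemma2.5} in the case $m=4$, so the only new work is the two even values $r\in\{0,2\}$, where $I_{8}^{*}$ can no longer survive as an intact $K_{2}^{*}$‑factor and its eight arcs must be rerouted into directed $4$‑cycles.

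For $r=0$ I would keep the two $\vv{C}_{4}$‑factors coming from $(1,1)$ and $(1,-1)$ untouched, and concentrate on the sub‑digraph $G'=\vv{X}\big(\mathbb{Z}_{2}\times\mathbb{Z}_{4},\{(0,1),(0,-1),(1,0)\}\big)$ (the symmetric digraph of the cube), which carries the awkward $(1,0)$‑arcs. Labelling the vertices $0,1,2,3$ for $(0,0),\dots,(0,3)$ and $4,5,6,7$ for $(1,0),\dots,(1,3)$, I would take the three $\vv{C}_{4}$‑factors $F_{0}=\{(0,4,5,1),(2,6,7,3)\}$, $F_{1}=\{(4,0,3,7),(1,5,6,2)\}$, and $F_{2}=\{(0,1,2,3),(4,7,6,5)\}$: the eight $(1,0)$‑arcs split evenly between $F_{0}$ and $F_{1}$, the row‑$1$ copies of $(0,1)$ land in $F_{0},F_{1}$ and the row‑$0$ copies in $F_{2}$, with the $(0,-1)$‑arcs distributed oppositely. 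Together with the two factors from $(1,\pm1)$ this is a $\vv{C}_{4}$‑factorization, giving $r=0$; the only content is exhibiting $F_{0},F_{1},F_{2}$, after which arc‑disjointness and exhaustion of $G'$ are a routine check.

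For $r=2$ I would start from the $r=0$ factorization and observe that the two $\vv{C}_{4}$‑factors coming from $(1,1)$ and $(1,-1)$ are mutual reverses, so their union is a copy of $2C_{4}^{*}$, namely $C_{4}^{*}$ on $\{0,2,5,7\}$ together with $C_{4}^{*}$ on $\{1,3,4,6\}$. Each symmetric $4$‑cycle $C_{4}^{*}$ splits into its two perfect matchings, read as double arcs, so $2C_{4}^{*}$ decomposes into two $K_{2}^{*}$‑factors. Replacing those two $\vv{C}_{4}$‑factors by these two $K_{2}^{*}$‑factors while retaining $F_{0},F_{1},F_{2}$ produces the required $\{(K_{2}^{*})^{2},\vv{C}_{4}^{3}\}$‑factorization.

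The main obstacle is therefore just the explicit $r=0$ construction: once a $\vv{C}_{4}$‑factorization of $G'$ is in hand, both $r=0$ and $r=2$ follow immediately, and $r\in\{1,3,5\}$ come from Corollary~\ref{lemma2.5}. It is worth noting why $r=4$ is genuinely absent from the list: a single $\vv{C}_{4}$‑factor uses exactly one arc from each of eight distinct double arcs (its two $4$‑cycles are vertex‑disjoint, so no double arc can be fully used), whence its complement contains eight unpaired arcs and cannot be a union of $K_{2}^{*}$‑factors; thus $s=1$ forces $r\neq 4$.
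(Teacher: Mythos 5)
Your proposal is correct, and the details check out: $F_0,F_1,F_2$ are arc-disjoint $\vv{C}_4$-factors that exactly exhaust the $24$ arcs of $\vv{X}\big(\mathbb{Z}_2\times\mathbb{Z}_4,\{(0,1),(0,-1),(1,0)\}\big)$, so together with the two pure diagonal generator factors they settle $r=0$; re-pairing the union of those two diagonal factors (which is $C_4^*$ on $\{0,2,5,7\}$ plus $C_4^*$ on $\{1,3,4,6\}$) into two double-arc perfect matchings settles $r=2$; and handling $r\in\{1,3,5\}$ by Corollary~\ref{lemma2.5} with $m=4$ is exactly what the paper does. The paper's proof has the same skeleton---explicit factorizations for $r\in\{0,2\}$ plus Corollary~\ref{lemma2.5}---but its constructions are organized in the mirror-image way: for $r=0$ it keeps the two \emph{horizontal} generator factors $[(0,1,2,3),(4,5,6,7)]$ and $[(0,3,2,1),(4,7,6,5)]$ intact and mixes the diagonal and vertical arcs across the remaining three factors, whereas you keep the \emph{diagonal} factors pure and mix horizontal with vertical; and for $r=2$ the paper gives an independent construction in which $I_8^*$ itself survives as one of the two $K_2^*$-factors (the other being built from diagonal double arcs), whereas you derive $r=2$ from your $r=0$ factorization by a swap. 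Your route buys some economy---one construction serves both even cases, the Cayley-generator bookkeeping makes arc-disjointness nearly automatic, and the $2C_4^*$-into-matchings trick is reusable---while the paper's bare lists are self-contained but require unstructured verification. Your closing observation on why $r=4$ is absent (the complement of a $\vv{C}_4$-factor leaves one arc of each of eight double arcs unpaired, hence is not symmetric and cannot be a union of $K_2^*$-factors) is also sound and parallels the $s\neq 1$ argument the paper gives in Theorems~\ref{maintheorem} and~\ref{maintheorem2}.
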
  
\begin{proof}
Let $V(C_{4}^{*}[2]\oplus I_8^*)= \mathbb{Z}_{8}$ and we define the following $\vv{C}_{4}$-factorization of $C_{4}^{*}[2]\oplus I_8^*$;\\
\begin{footnotesize}
$
\mathcal{F}=\Big\{\big[(0,1,2,3),(4,5,6,7)\big],\big[(0,3,2,1),(4,7,6,5)\big],\big[(0,5,1,4),(2,7,3,6)\big],\big[(0,4,3,7),$\\
$(1,5,2,6)\big],\big[(0,7,2,5),(1,6,3,4)\big]\Big\}$
\end{footnotesize}

For $r=2$, we define the following $\vv{C}_{4}$-factors of $C_{4}^{*}[2]\oplus I_8^*$
\begin{footnotesize}
$$
F_{1}=[(0,1,2,3),(4,5,6,7)], \,\ F_{2}=[(0,3,6,5),(1,4,7,2)] , F_{3}=[(0,5,4,1),(2,7,6,3)]\,\
$$
\end{footnotesize}
and the following two $K_{2}^{*}$-factors,

\begin{footnotesize}
$
F_4=[(0,4)^*,(1,5)^*,(2,6)^*,(3,7)^*], \,\
F_5=[(0,7)^*,(1,6)^*,(2,5)^*,(3,4)^*]
$
\end{footnotesize}\\
Therefore $C_{4}^{*}[2]\oplus I_8^*$ has two $K_{2}^{*}$-factors and three $\vv{C_{4}}$-factors. 

The remaining cases are obtained from Corollary \ref{lemma2.5} for $m=4$.
\end{proof}

\begin{lemma}\label{lemma2.11}
$K_{12}^{*}$ has a $\{(K_{2}^{*})^{r}, \vv{C}_{4}^{s}\}$-factorization for $r\in \{0,1,2,3,4,5,7,9,\allowbreak 11\}$ with $r+s=11$.
\end{lemma}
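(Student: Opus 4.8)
The plan is to organize the proof by the parity of $r$, using $r+s=11$ from Lemma~\ref{necessary}. First I would record the pair-collapse that underlies the whole argument, exactly as in the proof of Theorem~\ref{maintheorem} with $m=4$, $x=3$: partition the $12$ vertices into six pairs, collapse each pair to a single vertex to obtain $K_6^*$, and use Proposition~\ref{lemma3} to split $K_6^*$ into five $K_2^*$-factors. Re-expanding, one of these factors together with all six within-pair double arcs assembles into a $K_4^*$-factor of $K_{12}^*$ (three vertex-disjoint $K_4^*$'s), while each of the remaining four factors becomes a $K_{(2:2)}^*$-factor; note that $K_{(2:2)}^*$ is just $C_4^*$.

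For odd $r$ I would decompose these pieces. Each $K_{(2:2)}^*$-factor is three disjoint $C_4^*$'s, and a $C_4^*$ splits either into two $K_2^*$-factors or into its two opposite orientations as two $\vv{C}_4$-factors, so each such factor contributes $0$ or $2$ to $r$. For the $K_4^*$-factor I would use $K_4^*=C_4^*\oplus I_4^*$: the summand $I_4^*$ is forced to be a $K_2^*$-factor while the $C_4^*$ part again contributes $0$ or $2$. Moreover a single $K_4^*$ cannot be split using an even number of $K_2^*$-factors: the choice $a=0$ is $\mathrm{OP}^*(4^1)$, which fails by Theorem~\ref{OP}, and $a=2$ is impossible because deleting symmetric $K_2^*$-factors from the symmetric $K_4^*$ always leaves a symmetric remainder, never a single $\vv{C}_4$. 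Hence the $K_4^*$-factor contributes exactly $1$ or $3$, and summing with $2c$ for $c\in\{0,1,2,3,4\}$ yields every odd value $r\in\{1,3,5,7,9,11\}$.

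The case $r=0$ is immediate: it is precisely $\mathrm{OP}^*(4^3)$, which has a solution since $(4,3)$ is not one of the exceptional pairs in Theorem~\ref{OP}.

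The hard part is the two even values $r=2$ and $r=4$, since every block decomposition that isolates a $K_4^*$ forces $r$ to be odd (the $K_4^*$ contributes an odd number while the equipartite pieces contribute even numbers). To produce them I would use the conversion identity $\vv{C}_4\oplus R(\vv{C}_4)=C_4^*$, so that any reverse-pair of $\vv{C}_4$-factors $\{F,R(F)\}$ appearing in a $\vv{C}_4$-factorization can be exchanged for two $K_2^*$-factors. It therefore suffices to exhibit one $\vv{C}_4$-factorization of $K_{12}^*$ carrying at least two reverse-pairs: converting one pair gives $r=2$ and converting two gives $r=4$. I would build such a factorization from the within-block $C_4^*$-factor $E_0$ of the $K_4^*$-factor (which is automatically a reverse-pair) together with a $\vv{C}_4$-factorization of the complement $K_{12}^*-E_0=K_{(4:3)}^*\oplus I_0$ arranged to be closed under arc reversal, so that its factors break into further reverse-pairs. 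The main obstacle is exactly this last step: guaranteeing that the all-$\vv{C}_4$ factorization carries enough reverse symmetry (at least two reverse-pairs) while correctly absorbing the self-inverse difference class on $\mathbb{Z}_{12}$; this scarcity of reverse-pairs is also what leaves $r\in\{6,8,10\}$ open, since the construction does not supply enough pairs to reach them. If a clean symmetric construction proves elusive, I would instead record the two factorizations for $r=2,4$ by an explicit list of factors, in the style of Lemma~\ref{lemma5}.
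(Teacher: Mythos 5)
Your handling of $r=0$ (via Theorem \ref{OP}, since $(4,3)$ is not an exceptional pair) and of all odd $r$ is sound. For odd $r$ your route differs from the paper's: you collapse the six pairs to $K_6^{*}$, apply Proposition \ref{lemma3}, and re-expand to obtain a $K_4^{*}$-factor plus four $K_{(2:2)}^{*}$-factors (in effect the construction of Case 2(a) in the proof of Theorem \ref{maintheorem2}), whereas the paper takes the $C_4$-factorization of $K_{12}-I$ coming from the undirected uniform Oberwolfach problem, symmetrizes it by Proposition \ref{lemma1.3} into five $C_4^{*}$-factors plus the $K_2^{*}$-factor $I^{*}$, and splits each $C_4^{*}$-factor into either two $K_2^{*}$-factors or two $\vv{C}_4$-factors. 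Both arguments yield every odd $r\in\{1,3,5,7,9,11\}$, and your parity observation (the $K_4^{*}$-factor must contribute an odd number of $K_2^{*}$-factors) is correct.

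The genuine gap is at $r=2$ and $r=4$, which is exactly where the lemma has content: these are the values that no product-type construction reaches, and the paper settles them by exhibiting the explicit factorizations $\mathcal{F}_1$ and $\mathcal{F}_2$ on $\mathbb{Z}_{12}$. Your reverse-pair conversion is valid as far as it goes ($F\oplus R(F)$ is a $C_4^{*}$-factor, hence splits into two $K_2^{*}$-factors), but you never produce a $\vv{C}_4$-factorization of $K_{12}^{*}$ containing two reverse-pairs, and the symmetric construction you sketch cannot work as stated: $K_{12}^{*}-E_0\cong K_{(4:3)}^{*}\oplus I_0$ is $9$-regular, so any $\vv{C}_4$-factorization of it has an odd number (nine) of factors, while closure under arc reversal would force those factors to split into reverse-pairs, since no $\vv{C}_4$-factor equals its own reversal --- an immediate parity contradiction. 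At best a partial symmetry is available, and that still has to be constructed; falling back on ``record explicit lists in the style of Lemma \ref{lemma5}'' without supplying the lists leaves the two hardest cases unproven. As written, your proposal establishes the lemma only for $r\in\{0,1,3,5,7,9,11\}$.
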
 
\begin{proof}
The cases $r=0$ and $r=11$ are obtained by Theorem \ref{ilk} and Proposition \ref{lemma3}, respectively. Since solutions to  $OP(4^{\frac{v}{4}})$ and  $OP(m^{\frac{v}{m}})$ exist except for $v=6$ or $v=12$ when $m=3$, $K_{12}-I$ has a $C_4$-factorization where $I$ is a $1$-factor of $K_{12}$. By Proposition \ref{lemma1.3}, $K_{12}^{*}$ can be factorized into five $C_4^{*}$-factors and one $I^*$-factor which is a $K_2^*$-factor of $K_{12}^{*}$. Also, $C_4^{*}$ has a $\vv{C}_{4}$-factorization and $K_{2}^{*}$-factorization. So, we obtain a $\{(K_{2}^{*})^{r}, \vv{C}_{4}^{s}\}$-factorization of $K_{12}^{*}$ for $r\in \{1,3,5,7,9\}$ with $r+s=11$.

Let $V(K_{12}^{*})$ be $\mathbb{Z}_{12}$, and define the following factorizations of $K_{12}^{*}$ for $r=2,4$, respectively. \\
\begin{footnotesize}
$
\mathcal{F}_{1}=\Big\{
[(0,6)^*,(1,7)^*,(2,8)^*,(3,9)^*,(4,10)^*,(5,11)^*],[(0,10)^*,(4,6)^*,(1,5)^*,(7,11)^*,
$\\
$
(2,9)^*,(3,8)^*],[(0,1,2,3),(4,5,6,7),(8,9,10,11)],
[(0,2,1,4),(3,5,7,6),(8,11,10,9)],
$\\
$
[(0,3,1,8),(2,4,11,6),(5,9,7,10)],
[(0,4,2,11),(1,6,8,10),(3,7,9,5)],
[(0,5,8,7),
$\\
$
(1,3,4,9),(2,10,6,11)],[(0,7,5,2),(1,10,8,4),(3,6,9,11)],[(0,8,6,1),(2,5,10,7),
$\\
$
(3,11,9,4)],[(0,9,6,5),(1,11,4,8),(2,7,3,10)],
[(0,11,1,9),(2,6,10,3),(4,7,8,5)]\Big\},
$
\end{footnotesize}
\\
\begin{footnotesize}
$
 \mathcal{F}_2=\Big\{
[(0,6)^*,(1,7)^*,(2,8)^*,(3,9)^*,(4,10)^*,(5,11)^*],[(0,10)^*,(4,6)^*,(1,5)^*,(7,11)^*,
$\\
$
(2,9)^*,(3,8)^*],[(0,8)^*,(2,6)^*,(1,10)^*,(4,7)^*,(3,11)^*,(5,9)^*],[(0,1)^*,(2,3)^*,(4,5)^*,
$\\
$
(6,7)^*,(8,9)^*,(10,11)^*],[(0,2,1,3),(4,8,11,9),(5,7,10,6)],[(0,3,10,5),(1,8,6,11),
$\\
$
(2,4,9,7)],[(0,4,11,2),(1,6,10,9),(3,5,8,7)],[(0,5,6,9),(1,2,11,4),(3,7,8,10)],
$\\
$
[(0,7,9,11),(1,4,3,6),(2,10,8,5)],[(0,9,10,7),(1,11,6,8),(2,5,3,4)],[(0,11,8,4),
$\\
$
(1,9,6,3),(2,7,5,10)]\Big\}.
$
\end{footnotesize}

So, $K_{12}^{*}$ has a $\{(K_{2}^{*})^{r}, \vv{C}_{4}^{s}\}$-factorization for $r\in \{0,1,2,3,4,5,7,$ $9,11\}$ with $r+s=11$.
\end{proof}

\begin{lemma}\label{lemma2.13}
$K_{(4:3)}^{*}$ has a $\{(K_{2}^{*})^{r}, \vv{C}_{4}^{s}\}$-factorization for $r\in \{0,1,2,4,6,8\}$ with $r+s=8$.
\end{lemma}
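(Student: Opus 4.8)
The plan is to separate the even values $r\in\{0,2,4,6,8\}$ from the single odd value $r=1$. Observe first that every vertex of $K_{(4:3)}^*$ has out-degree $8$, and each $K_2^*$-factor as well as each $\vv{C}_4$-factor contributes exactly $1$ to the out-degree of each vertex; hence any solution must satisfy $r+s=8$, as claimed. For the even values, note that $K_{(4:3)}$ satisfies the hypotheses of Theorem \ref{liu} with $(x,y,m)=(4,3,4)$: we have $4\mid 12$, $x(y-1)=8$ is even, $y\neq 2$, and $(4,3,4)$ is not an exceptional triple. Thus $K_{(4:3)}$ has a $C_4$-factorization into four $C_4$-factors, and by Proposition \ref{lemma1.3} the digraph $K_{(4:3)}^*$ has a $C_4^*$-factorization into four $C_4^*$-factors. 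Each $C_4^*$-factor decomposes either into two $K_2^*$-factors or into two $\vv{C}_4$-factors; choosing $k$ of the four factors to be split into $K_2^*$-factors and the remaining $4-k$ into $\vv{C}_4$-factors yields a $\{(K_2^*)^{2k},\vv{C}_4^{\,2(4-k)}\}$-factorization. As $k$ ranges over $\{0,1,2,3,4\}$ this produces exactly $r\in\{0,2,4,6,8\}$ with $r+s=8$.

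The remaining case $r=1$ cannot be reached by the argument above. A symmetric $C_4^*$ splits only into two factors of the same type: removing one directed $4$-cycle leaves its reverse, again a $\vv{C}_4$, while removing one pair of double-arc edges leaves the complementary pair, again a $K_2^*$-factor; hence any factorization assembled from $C_4^*$-factors has even $r$. I would therefore obtain $r=1$ by an explicit construction. Realise $K_{(4:3)}^*$ as the directed Cayley graph $\vv{X}(\mathbb{Z}_{12},S)$ with connection set $S=\{\pm1,\pm2,\pm4,\pm5\}$, the nonzero residues not divisible by $3$, so that the three parts are the residue classes modulo $3$. First peel off one $K_2^*$-factor, for instance the matching $M=\{(i,i+1)^*: i\in\{0,2,4,6,8,10\}\}$, each of whose double arcs joins distinct parts. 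It then remains to partition the $7$-regular symmetric digraph $K_{(4:3)}^*-M$ into seven $\vv{C}_4$-factors, each consisting of three vertex-disjoint directed $4$-cycles. I would exhibit these seven factors as an explicit array of directed $4$-cycles, exactly as was done for $K_{12}^*$ in Lemma \ref{lemma2.11} and for $C_4^*[2]\oplus I_8^*$ in Lemma \ref{lemma5}, and verify that they use each arc of $K_{(4:3)}^*-M$ precisely once.

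The main obstacle is this case $r=1$: by the parity obstruction just noted it does not follow from Theorem \ref{liu} or from any of the uniform building blocks, and must be settled by a bespoke decomposition. Locating the seven $\vv{C}_4$-factors by hand is delicate, since every directed $4$-cycle must move among the three parts without repeating a part on consecutive vertices; a short computer search, or a patient case analysis exploiting the $\mathbb{Z}_{12}$-rotational symmetry of the circulant, is the natural way to produce a valid array, after which correctness is immediate by arc-counting. The even cases, by contrast, are essentially automatic from Theorem \ref{liu} and Proposition \ref{lemma1.3}.
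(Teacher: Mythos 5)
Your handling of the even values is correct and is essentially the paper's own argument: Theorem \ref{liu} applied to $(x,y,m)=(4,3,4)$ gives a $C_4$-factorization of $K_{(4:3)}$, Proposition \ref{lemma1.3} lifts it to a $C_4^*$-factorization of $K_{(4:3)}^*$, and splitting each $C_4^*$-factor into either two $K_2^*$-factors or two $\vv{C}_4$-factors realizes every even $r\in\{0,2,4,6,8\}$. (The paper instead quotes Lemmata \ref{dliu} and \ref{dliu2} for $r=0$ and $r=8$, but that difference is immaterial.) Your parity observation --- that any factorization assembled from symmetric $C_4^*$-factors must have even $r$ --- is also correct and explains well why $r=1$ needs separate treatment.

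For $r=1$, however, your proposal has a genuine gap: you never actually produce the required $\{(K_2^*)^1,\vv{C}_4^{\,7}\}$-factorization. You describe its shape (peel off a $K_2^*$-factor $M$, then partition the remaining $7$-regular symmetric digraph into seven $\vv{C}_4$-factors) and then defer existence to ``a short computer search, or a patient case analysis.'' That deferred step is the only nontrivial content of the lemma; until the seven arc-disjoint spanning $\vv{C}_4$-factors are exhibited and verified, the case $r=1$ remains unproven. The paper closes exactly this gap by writing down an explicit factorization $\mathcal{F}_1$ on $V(K_{(4:3)}^*)=\mathbb{Z}_{12}$ (with parts $\{0,1,2,3\}$, $\{4,5,6,7\}$, $\{8,9,10,11\}$), consisting of seven $\vv{C}_4$-factors such as $[(0,4,2,5),(1,8,3,11),(6,9,7,10)]$ together with the $K_2^*$-factor $[(0,6)^*,(1,4)^*,(2,8)^*,(3,9)^*,(5,10)^*,(7,11)^*]$. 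To complete your proof you would need to supply such an explicit array; your circulant realisation $\vv{X}(\mathbb{Z}_{12},\{\pm1,\pm2,\pm4,\pm5\})$ is a perfectly good setting in which to do this, but the array itself is missing, and correctness of the lemma cannot rest on the assertion that a search would find one.
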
 
\begin{proof}
The cases $r=0$ and $r=8$ are obtained by lemmata \ref{dliu} and \ref{dliu2}, respectively. By Theorem \ref{liu}, $K_{(4: 3)}$ has a $C_{4}$-factorization and so, $K_{(4: 3)}^{*}$ has a $C_{4}^*$-factorization by Proposition \ref{lemma1.3}. Since $C_{4}^*$ has a $K_{2}^*$-factorization and a $\vv{C}_{4}$-factorization, $K_{(4: 3)}$ can be factorized into two $K_{2}^*$-factors and six $\vv{C}_{4}$-factors. Similarly, a $\{(K_{2}^{*})^{r}, \vv{C}_{4}^{s}\}$-factorization of $K_{(4: 3)}$ is obtained for $r\in \{4,6\}$ with $r+s=8$.

Finally, let the vertex set of $K_{(4:3)}^{*}$ be $\mathbb{Z}_{12}$, and define the following factorization of $K_{(4:3)}^{*}$ for $r=1$,\\ 
\begin{footnotesize}
$
\mathcal{F}_1=\Big\{ [(0,4,2,5),(1,8,3,11),(6,9,7,10)],
[(0,5,1,7),(2,9,4,11),(3,8,6,10)],[(0,7,1,9),
$\\
$
(2,4,3,10),(5,11,6,8)],[(0,8,1,10),(2,7,3,5),(4,9,6,11)],[(0,9,2,11),(1,5,3,6),
$\\
$
(4,10,7,8)],[(0, 10, 4, 8),(1, 11, 5, 9),(2,6,3,7)],[(0,11,3,4),(1,6,2,10),(5,8,7,9)],[(0,6)^*,
$\\
$
(1,4)^*,(2,8)^*,(3,9)^*,(5,10)^*,(7,11)^*]\Big\}.
$
\end{footnotesize}
\end{proof} 
In Theorem \ref{maintheorem}, we have given the necessary and sufficient conditions for the existence of a solution for $\mathrm{HWP}^{*}(v; 2^{r}, m^{s})$ for even $m\geq 6$. The construction in Theorem \ref{maintheorem} is not valid when $m=4$, therefore we also examine the case of $m=4$ in the following theorem.
\begin{theorem}\label{maintheorem2}
Let $r$, $s$ be nonnegative integers. Then, $\mathrm{HWP}^{*}(v; 2^{r}, 4^{s})$ has a solution if and only if $r+s=v-1$ except for $s=1$ or $(r,v)=(0,4)$, and except possibly when at least one of the following conditions holds;
\begin{enumerate}
\item $r \geq 2$ even and $v\equiv 4, 20 \pmod {24}$,
\item $s\in \{3, 5\}$ and $v\equiv 12 \pmod {24}$.
\end{enumerate}
\end{theorem}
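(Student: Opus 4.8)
The plan is to settle necessity together with the two genuine exceptions, and then prove sufficiency by splitting on the parity of $r$, the even case being the source of the residue conditions modulo $24$. For necessity, Lemma~\ref{necessary} already gives $r+s=v-1$ and $4\mid v$ when $s>0$, while the case $s=0$ is the $K_2^*$-factorization of $K_v^*$, which by Proposition~\ref{lemma3} needs $v$ even. The two sharp exceptions are handled as in Theorem~\ref{maintheorem}: deleting $v-2$ edge-disjoint $K_2^*$-factors from the $(v-1)$-regular $K_v^*$ leaves a symmetric $1$-regular digraph, i.e. again a $K_2^*$-factor, so $s=1$ is impossible; and $(r,v)=(0,4)$ is a $\vv{C}_4$-factorization of $K_4^*$, that is a solution of $\mathrm{OP}^*(4^1)$, excluded by Theorem~\ref{OP}. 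The remaining families are only asserted to be \emph{possible} exceptions, so nothing need be proved there.

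For sufficiency I first record the extremes: $r=0$ is $\mathrm{OP}^*(4^{x})$ for $v=4x$, solvable for $x\ge 2$ by Theorem~\ref{OP}, and $r=v-1$ is Proposition~\ref{lemma3}. For odd $r$ I use a contraction: splitting the $4x$ vertices into $2x$ pairs writes $K_{4x}^*$ as the single ``within-pairs'' $K_2^*$-factor $J$ together with $2x-1$ blow-ups of the $K_2^*$-factors of a $K_{2x}^*$, each blow-up being a $K_{(2:2)}^*$-factor. As $K_{(2:2)}^*\cong C_4^{*}$ splits into two $K_2^*$-factors or two $\vv{C}_4$-factors, using the first option on $a$ blow-ups gives $r=1+2a$, hence every odd $r\in\{1,3,\dots,v-1\}$; this uniform argument explains why there are no odd-$r$ exceptions.

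The core is the even-$r$ regime, governed by a parity obstruction. In the contraction above $J$ is forced and each $K_{(2:2)}^*$-factor contributes an even number of $K_2^*$-factors, so naively $r$ is always odd; equivalently, grouping vertices into $4$-sets exhibits $K_{4x}^*$ as disjoint $K_4^*$'s plus $K_{(4:x)}^*$, and $K_4^*$ has a $\{(K_2^*)^{r},\vv{C}_4^{s}\}$-factorization only for odd $r\in\{1,3\}$. Breaking the parity requires absorbing the diagonal $J$ into control blocks spanning at least $8$ vertices that admit \emph{even} $r$, and these are exactly Lemmata~\ref{lemma5}, \ref{lemma2.11} and \ref{lemma2.13}: the block $C_4^*[2]\oplus I_8^*$ carries the within-pair arcs $I_8^*$ on four pairs and achieves $r\in\{0,2\}$, while $K_{12}^*$ and $K_{(4:3)}^*$ do the analogous job on six pairs. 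Tiling all $2x$ pairs by such blocks is possible precisely when $4\mid 2x$ (four-pair blocks, i.e. $8\mid v$) or when $6\mid 2x$ (six-pair blocks, i.e. $3\mid x$, $12\mid v$), the remaining equipartite material $K_{(8:\,x/2)}^*$ or $K_{(12:\,x/3)}^*$ being factored uniformly into $\vv{C}_4$- or $K_2^*$-factors by Lemmata~\ref{dliu}, \ref{dliu2} and \ref{lemma2}. Thus for $v\equiv 0,8,16\pmod{24}$ every admissible even $r$ is reached; for $v\equiv 12\pmod{24}$ the gap $r\in\{6,8,10\}$ in Lemma~\ref{lemma2.11} blocks the largest even $r$ and pins $s\in\{3,5\}$ as possible exceptions; and for $v\equiv 4,20\pmod{24}$ neither tiling exists ($x$ odd and $3\nmid x$), so no parity-breaking block can be formed and every even $r\ge2$ remains possibly exceptional, while $r=0$ still survives through Theorem~\ref{OP}.

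I expect the main obstacle to be exactly this even-$r$ bookkeeping: verifying that the explicit small factorizations of Lemmata~\ref{lemma5}, \ref{lemma2.11} and \ref{lemma2.13}, combined with the uniform equipartite blocks, assemble into \emph{every} required even value of $r$ in the residue classes $0,8,16$ and $12$ modulo $24$, and that the only values this method cannot reach are precisely $s\in\{3,5\}$ at $v\equiv12$ and all even $r\ge2$ at $v\equiv4,20$ — which is what makes the stated exception list the natural one for this construction.
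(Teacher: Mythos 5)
Your necessity discussion, the two sharp exceptions, the case $r=0$, and the odd-$r$ construction are all correct; in fact your odd-$r$ argument (the diagonal $K_2^*$-factor $J$ plus $2x-1$ blown-up $K_2^*$-factors of $K_{2x}^*$, each $K_{(2:2)}^*\cong C_4^*$ split into two $K_2^*$-factors or two $\vv{C}_4$-factors) is more uniform than the paper's, which reaches odd $r$ for $v\equiv 0\pmod 8$ by a different route. You have also correctly identified Lemmata \ref{lemma5}, \ref{lemma2.11} and \ref{lemma2.13} as the parity-breaking ingredients, and the missing values $\{6,8,10\}$ in Lemma \ref{lemma2.11} as the reason $s\in\{3,5\}$ survives as a possible exception at $v\equiv 12\pmod{24}$.

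The even-$r$ assembly, however, has a genuine gap. In a solution of $\mathrm{HWP}^*$ every factor is spanning and of a single type, so disjoint blocks cannot contribute independently: all $v/8$ (resp.\ $v/12$) blocks must be given factorizations with a common profile $(r_0,s_0)$, whose factors are then unioned into spanning ones. Hence the blocks contribute one value $r_0$ with even $r_0\in\{0,2,4\}$ --- not a sum over blocks --- while your uniformly factored leftover $K_{(8:\,v/8)}^*$ (resp.\ $K_{(12:\,v/12)}^*$) contributes only $0$ or $v-8$ (resp.\ $0$ or $v-12$). So for $8\mid v$ your scheme realizes only even $r\in\{0,2,4\}\cup\{v-8,v-6,v-4\}$: already at $v=24$ it cannot produce $r=8$ or $r=14$, values the theorem asserts to exist. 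The missing idea is exactly how the paper organizes the cross-block arcs: not as one monolithic equipartite graph, but as many spanning factors of small width obtained by blowing up a resolvable decomposition of the quotient. For $v=8k$ the paper writes $K_{8k}^*=K_{4k}^*[2]\oplus I_{8k}^*$ and blows up a factorization of $K_{4k}^*$ into $C_4^*$-factors and a $K_2^*$-factor, producing $2k-2$ spanning $C_4^*[2]$-factors (each contributing $r\in\{0,2,4\}$ by Lemma \ref{lemma2.4}), one spanning $(C_4^*[2]\oplus I_8^*)$-factor (Lemma \ref{lemma5}), and one $K_2^*[2]$-factor; sums of these reach every even $r\le v-4$. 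For $v=24l+12$ it blows up a Kirkman triple system on $K_{6l+3}$, producing one $K_{12}^*$-factor (Lemma \ref{lemma2.11}) and $3l$ spanning $K_{(4:3)}^*$-factors, each contributing $r_1\in\{0,1,2,4,6,8\}$ (Lemma \ref{lemma2.13}), which reaches every even $r\le v-8$. It is this ``many factors, per-factor granularity at most two'' structure, absent from your tiling, that fills in all intermediate even values; without it the stated exception list cannot be attained.
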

\begin{proof}  
If you remove $(v-2)$ disjoint $K_2^*$-factors from $K_v^*$, then the remaining factor must be a $K_2^*$-factor in $K_v^*$.  Thus, there is no  $\{(K_2^*)^{v-2}, \vv{C}_4^1 \}$-factorization of $K_v^*$. So, we may assume $s \neq 1$.

Since $\mathrm{HWP}^{*}(v; n^{r}, m^{s})$ has a solution for $r=0$ except for $(v,m)=(4,4)$ by Theorem \ref{OP}, HWP$^{*}(4; 2^{r}, 4^{s})$ has no solution for $r=0$. So, we may assume that $r\geq 1$.

\textbf{Case 1 (}{\boldmath$v\equiv 0 \pmod 8$\textbf{)}}

Let $v=8k$ for a positive integer $k$. Note that,  $K_{8k}^{*}$ can be factorized as $K_{4k}^{*}[2]\oplus I_{8k}^*$. Also, $K_{4k}^{*}[2]$ can be factorized into $C_{4}^{*}[2]$-factors and a $K_{2}^{*}[2]$-factor. The graph $k C_{4}^{*}[2]\oplus  I_{8k}^*$ can be considered as $(C_{4}^{*}[2]\oplus I_{8}^*)$-factor in $K_{8k}^{*}$. So, $K_{8k}^{*}$ has a $\{(C_{4}^{*}[2])^{2k-1}, I_{8}^*, K_{2}^{*}[2]\}$-factorization. Also, $C_{4}^{*}[2]$ has a $\{(K_{2}^{*})^{r_0},\vv{C}_{4}^{s_0}\}$-factorization for $r_0\in \{0,2,4\}$ where $r_0+s_0=4$ by Lemma \ref{lemma2.4}. Since $K_{2}^{*}[2]=C_{4}^{*}$, $K_{2}^{*}[2]$ has a $\{(K_{2}^{*})^{r_1},\vv{C}_{4}^{s_1}\}$-factorization for $r_1\in \{0,2\}$ and $r_1+s_1=2$. $C_{4}^{*}[2]\oplus I_{8}^*$ has a $\{(K_{2}^{*})^{r_2}, \vv{C}_{4}^{s_2}\}$-factorization for $r_2\in \{0,1,2,3,5\}$ where $r_2+s_2=5$ by Lemma \ref{lemma5}. These factorizations give a $\left\{\left(K_{2}^{*}\right)^{r}, \vv{C}_{4}^{s}\right\}$-factorization of $K_{8 k}^{*}$ for $r \neq 8 k-2$ with $r+s=8 k-1$.

Then, HWP$^{*}(v; 2^{r}, 4^{s})$ has a solution for  $r+s=v-1$, $s \neq 1$ and $v\equiv 0 \pmod 8$.

\textbf{Case 2 (}{\boldmath$v\equiv 4 \pmod 8$\textbf{)}}

Let $v=8k+4$ for a nonnegative integer $k$.

\textbf{(a)} Assume r is odd. Partition the vertices of $K_{8k+4}^{*}$ into $4k+2$ sets of size $2$, represent each set of size $2$ vertices in $K_{8k+4}^{*}$ with a single vertex and represent all double arcs between sets of size $2$ as a single double arc, to get a $K_{4k+2}^{*}$. By Proposition \ref{lemma3}, $K_{4k+2}^{*}$ has a decomposition into $4k+1$ $K_{2}^{*}$-factors. Construct a $K_{4}^{*}$-factor from one of the $K_{2}^{*}$-factors and a $K_{(2:2)}^{*}$-factor from each of the remaining $4k$ $K_{2}^{*}$-factors. Then, factorize $K_{8k+4}^*$ into a $K_{4}^{*}$-factor and $(4k)$ $K_{(2:2)}^{*}$-factors. $K_{4}^{*}$ has a decomposition into one $K_{2}^{*}$ and two $\vv{C}_{4}$-factors or three $K_{2}^{*}$-factors, and $K_{(2:2)}^{*}$ has a $\{\left(K_{2}^{*}\right)^{r_0}, \vv{C}_{4}^{s_0}\}$-factorization for $r_0\in\{0, 2\}$ satisfying $r+s=2$. So, $K_{8k+4}^{*}$ has a $\{(K_{2}^{*})^{r},\vv{C}_{4}^{s}\}$-factorization for odd $r$. Therefore, HWP$^{*}(v; 2^{r}, 4^{s})$ has a solution for odd $r$ and $v\equiv 4 \pmod 8$.

\textbf{(b)} Assume r is even, and also let $k\equiv 1 \pmod 3$. Then, we have $v=24l+12$ for some nonnegative integer $l$. 

Representing each part of $4$ vertices in $K_{24l+12}^{*}$ with a single vertex and all double arcs between parts of size $4$ as a single double arc, we have a $K_{6l+3}^{*}$. Since a Kirkman triple system exists for orders $6l+3$, we have a $C_3$-factorization of $K_{6l+3}$. Then, a $C_{3}^{*}$-factorization of $K_{6l+3}^{*}$ is obtained by Proposition \ref{lemma1.3}. 
 
Construct a $K_{12}^{*}$-factor from one of the $C_{3}^{*}$-factors and $K_{(4:3)}^{*}$-factor from each of the remaining $3l$ $C_{3}^{*}$-factors. Then, get a $\{ K_{12}^{*}, (K_{(4:3)}^{*})^{3l}\}$-factorizati-on of $K_{24l+12}^{*}$. By Lemma \ref{lemma2.11}, $K_{12}^{*}$ has a $\{(K_{2}^{*})^{r_0}, \vv{C}_{m}^{s_0}\}$-factorization $r_0\in \{0,1,2,3,4,$ $5,7,9,11\}$ with $r_0+s_0=11$. Also, $K_{(4:3)}^{*}$ has a $\{\left(K_{2}^{*}\right)^{r_1}, \vv{C}_{4}^{s_1}\}$-factorization by Lemma \ref{lemma2.13} for $r_1\in \{0,1,2,4,6,8\}$ with $r_1+s_1=8$. Those factorizations give a $\{(K_{2}^{*})^{r}, \vv{C}_{m}^{s}\}$-factorization of $K_{24l+12}^{*}$ where $r=r_0+ar_1$ and $s=s_0+bs_1$ satisfying $r+s=24l+11=v-1$ with $1\leq r, s \leq v-1$ and $a+b=3l$.
We obtain the requested even $r\in [0, v-1]$ except for $r=v-6$ and $r=v-4$, from the sum of $r_0$ and $ar_1$. Then, HWP$^{*}(v; 2^{r}, 4^{s})$ has a solution for  $r+s=v-1$, $s\notin \{3, 5\}$ and $v\equiv 12 \pmod {24}$.
\end{proof} 
\section{Solutions to HWP$^{*}(v;m^r, (2m)^s)$}
In this section, we prove that for even $m$, a solution to $\mathrm{HWP}^{*}(v; m^{r}, (2m)^{s})$ exists for $r+s=v-1$ and except possibly when $s\in\{1,3\}$. 

Firstly, factorize $K^{*}_{2mx}$ into a $K_{2m}^{*}$-factor and $(2x-2)$ $K_{(m:2)}^{*}$-factors. $K_{(m:2)}^{*}$ has a $\{\vv{C}_{m}^{r}, \vv{C}_{2m}^{s}\}$-factorization for $r\in \{0,m\}$ and $r+s=m$. Using Lemma \ref{wlackieven} and Proposition \ref{lemma1.3}, a $\{(C_{m}^{*}[2])^{\frac{m-4}{2}}, I_{2m}^*, \Gamma^{*}_m\}$-factorization of $K_{2m}^{*}$ is also obtained. Therefore, in order to factorize $K^{*}_{2mx}$ into $\vv{C}_{m}$-factors and $\vv{C}_{2m}$-factors, $\Gamma^{*}_m$, $C_{m}^{*}[2]\oplus I_{2m}^*$ and $C_{m}^{*}[2]$ must be factorized into $\vv{C}_{m}$-factors and $\vv{C}_{2m}$-factors. The following lemmata examine the existence of a $\{\vv{C}_{m}^{r}, \vv{C}_{2m}^{s}\}$-factorization of these graphs for $r+s\in \{4,5,6\}$.
\begin{lemma}\label{lemma4.1}

Let $m\geq4$ be an even integer. Then $\Gamma^{*}_m$ has a $\{\vv{C}_{m}^{r}, \vv{C}_{2m}^{s}\}$-factorization for $r\in \{0,6\}$ and $r+s=6$.
\end{lemma}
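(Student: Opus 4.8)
The plan is to treat the two required values $r=0$ and $r=6$ separately, lifting an undirected factorization of $\Gamma_m$ through Proposition~\ref{lemma1.3} whenever Lemma~\ref{lemma2.8} provides one, and resorting to a direct construction only in the single case where the undirected object is genuinely obstructed. Throughout, recall that $\Gamma^{*}_m=C_m^{*}[2]\oplus F_m^{*}[2]$ is $6$-regular in both in- and out-degree, so any $\{\vv{C}_m^r,\vv{C}_{2m}^s\}$-factorization consists of exactly $r+s=6$ factors; this is why only the extreme values $r\in\{0,6\}$ are in play.

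For $r=0$ I would argue exactly as in Corollary~\ref{lemmagamma}: by Lemma~\ref{lemma2.8}(1) the undirected graph $\Gamma_m$ has a $C_{2m}$-factorization, so Proposition~\ref{lemma1.3} yields a $C_{2m}^{*}$-factorization of $\Gamma^{*}_m$, and splitting each $C_{2m}^{*}$ into its two cyclic orientations produces the desired $\vv{C}_{2m}$-factorization. For $r=6$ with $m\equiv 0\pmod 4$ the same mechanism applies verbatim: Lemma~\ref{lemma2.8}(2) gives a $C_m$-factorization of $\Gamma_m$, Proposition~\ref{lemma1.3} lifts it to a $C_m^{*}$-factorization, and each $C_m^{*}$ splits into two directed $m$-cycles, yielding a $\vv{C}_m$-factorization.

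The only remaining, and genuinely harder, case is $r=6$ with $m\equiv 2\pmod 4$. Here the undirected template collapses: Lemma~\ref{lemma2.8} supplies only a $\{C_m^{2},C_{2m}^{1}\}$-factorization, and the leftover $C_{2m}$-factor cannot be lifted to directed $m$-cycles, since the symmetric $2m$-cycle admits no decomposition into directed $m$-cycles (its only directed cycles of length exceeding two are the two full orientations). The remedy is to exploit the orientation freedom that is unavailable undirected and to build the six directed $m$-cycle factors directly on the vertex set $\mathbb{Z}_2\times\mathbb{Z}_m$. Concretely, I would combine the ``$\pm 1$'' arcs of $C_m^{*}[2]$ (those coming from the connection set $S$) with the reflection arcs of $F_m^{*}[2]$, which join $(a,i)$ to $(b,-i)$ and $(a,0)$ to $(b,m/2)$ for all $a,b\in\mathbb{Z}_2$, so that each directed $m$-cycle uses a reflection arc to cross between the two copies and thereby closes up at length $m$ rather than $2m$. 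After writing down explicit starter cycles or closed formulas for the six factors, in the style of the constructions in Lemmata~\ref{lemma2.6} and~\ref{lemma3.10}, it remains to check that they are pairwise arc-disjoint and exhaust $E(\Gamma^{*}_m)$.

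I expect this last verification to be the main obstacle. The difficulty is global: since no grouping of the two halves of $F_m^{*}[2]$ with part of $C_m^{*}[2]$ can be handled by the existing lemmata (the natural $K_2^{*}$-factors of $F_m^{*}[2]$ come from $C_4^{*}$'s and refuse to recombine into $m$-cycles), the six factors must genuinely interleave both parts, and balancing the distribution of reflection arcs across all six factors is delicate. I anticipate having to exhibit the smallest instance $m=6$ by an explicit ad hoc factorization before a uniform pattern valid for all $m\equiv 2\pmod 4$ can be established and verified.
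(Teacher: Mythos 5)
Your handling of the two easy cases coincides exactly with the paper's proof: for $r=0$ apply Lemma \ref{lemma2.8}(i), and for $r=6$ with $m\equiv 0\pmod 4$ apply Lemma \ref{lemma2.8}(ii), in each case lifting through Proposition \ref{lemma1.3} and splitting every symmetric cycle into its two orientations. Your diagnosis of why $m\equiv 2\pmod 4$ resists this mechanism is also correct: the only directed cycles in $C_{2m}^{*}$ are digons and the two Hamiltonian orientations, so the $\{C_m^{2},C_{2m}^{1}\}$-factorization furnished by Lemma \ref{lemma2.8}(iii) cannot be refined into directed $m$-cycle factors, and a genuinely directed construction is unavoidable.

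But for that case --- which is the only nontrivial content of the lemma --- your proposal is a plan rather than a proof. You never exhibit the six directed $m$-cycle factors: the ``explicit starter cycles or closed formulas'' are deferred, the arc-disjointness and coverage verification is deferred, and you anticipate needing an ad hoc solution for $m=6$ before a general pattern emerges. That is precisely the gap. For comparison, the paper closes it with one uniform construction valid for all $m\equiv 2\pmod 4$ (no special case at $m=6$): it reuses two cycles from the proof of Lemma \ref{lemma3.10} (the cycle lying entirely in copy $0$, and the cycle zigzagging about $m/2$ through the reflection arcs of $F_m^{*}[2]$), defines two further explicit $m$-cycles --- one descending through copy $1$ and then copy $0$ via the $(1,\pm 1)$ arcs, and a second zigzag anchored at $(0,0)$, $(0,m/2)$ --- and assembles six factors from these four cycles using only the translation $+(1,0)$, the arc-reversal operator $R$, and a final complement: $F_0=\vv{C}_m^{(0)}\cup(\vv{C}_m^{(0)}+(1,0))$, $F_1=\vv{C}_m^{(1)}\cup R(\vv{C}_m^{(1)}+(1,0))$, $F_2=R(F_1)$, $F_3=\vv{C}_m^{(2)}\oplus(\vv{C}_m^{(2)}+(1,0))$, $F_4=\vv{C}_m^{(3)}\cup(\vv{C}_m^{(3)}+(1,0))$, and $F_5=\Gamma_m^{*}-\bigoplus_{i=0}^{4}F_i$. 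These symmetries are what make the disjointness check tractable; your sketch points in this direction, but until you write down such cycles and carry out that check, the case $m\equiv 2\pmod 4$, $r=6$ --- and hence the lemma --- remains unproven.
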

\begin{proof}
\textbf{Case 1 \boldmath{$(r=0)$}} By Lemma \ref{lemma2.8} $(i)$ and Proposition \ref{lemma1.3}, $\Gamma^{*}_m$ has a $\vv{C}_{2m}$-factorization. 

\textbf{Case 2 \boldmath{$(r=6)$}}  By Lemma \ref{lemma2.8} $(ii)$ and Proposition \ref{lemma1.3}, $\Gamma^{*}_m$ has a $\vv{C}_{m}$-factorization for  $m \equiv 0 \pmod 4$. 

When $m \equiv 2 \pmod 4$, define the following $m$-cycles. Also, let $\vv{C}_{m}^{(0)}$ and $\vv{C}_{m}^{(1)}$ cycles be equivalent to the $\vv{C}_{m}^{(0)}$ and $\vv{C}_{m}^{(2)}$ cycles respectively, as stated in Lemma \ref{lemma3.10}. \\
$
\vv{C}_{m}^{(2)}=\left(u_{0}, u_{1}, \ldots, u_{m-1}\right) \text { where } u_{i}= \begin{cases}(1, m-1-i) & \text { if } 0 \leq i \leq \frac{m}{2},  \\ (0, m-1-i) & \text { if } \frac{m}{2}+1 \leq i \leq m-1. \end{cases}
$
$\vv{C}_{m}^{(3)}=(y_{0}, y_{1}, \ldots y_{m-1} )$ where  $y_{0}=(0,0)$, $y_{1}=(0,\frac{m}{2})$, $y_{2}=(1,\frac{m}{2}+1)$, $y_{3}=(1,\frac{m}{2}-1)$  and 
$$
y_{i=}\left\{\begin{array}{l}
\left(1, \frac{m}{2}+(-1)^{i+1}\lfloor \frac{i}{2}\rfloor\right) \text {if } i \equiv 0, 1 \pmod4 \\
\left(0, \frac{m}{2}+(-1)^{i}\lfloor \frac{i}{2}\rfloor\right)  \,\ \text { if } i=2, 3\pmod4
\end{array}\right. \text { for } 4 \leq i\leq m-1.   
$$
 Using the above $m$-cycles, we obtain the following $m$-cycle factors. $F_{0}=\vec{C}_{m}^{(0)} \cup(\vv{C}_{m}^{(0)}+(1,0))$, $F_{1}=\vv{C}_{m}^{(1)} \cup R(\vv{C}_{m}^{(1)} +(1,0))$, $F_{2}=R\left(F_{1}\right)$, $F_{3}=\vv{C}_{m}^{(2)} \oplus (\vv{C}_{m}^{(2)}+(1, 0))$, $F_{4}=\vv{C}_{m}^{(3)} \cup (\vv{C}_{m}^{(3)}+(1, 0))$ and $F_{5}=\big(\Gamma^{*}_m\big)-\bigoplus_{i=0}^{4} F_{i}$. Then, $\{F_{0}, F_{1}, F_{2}, F_{3},$ $ F_{4},  F_{5}\}$ is a $\vv{C}_{m}$-factorization of $\Gamma^{*}_m$.
So, $\Gamma^{*}_m$ has a $\vv{C}_{m}$-factorization for even $m\geq4$.
\end{proof}

\begin{lemma}\label{lemma4.2}
Let $m\geq4$ be an even integer. Then $C_{m}^{*}[2]\oplus I_{2m}^*$ has a $\{\vv{C}_{m}^{r}, \vv{C}_{2m}^{s}\}$-factorization for $r\in \{1,3\}$ and $r+s=5$.
\end{lemma}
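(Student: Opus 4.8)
The plan is to argue entirely inside the directed Cayley-graph representation introduced just before Lemma~\ref{lemma4}, where
\[
C_{m}^{*}[2]\oplus I_{2m}^{*}=\vv{X}\big(\mathbb{Z}_{2}\times\mathbb{Z}_{m},\,S\cup\{(1,0)\}\big),\qquad S=\{(0,1),(1,1),(0,-1),(1,-1)\},
\]
so that $I_{2m}^{*}$ is precisely the connection $(1,0)$. The first point I would record is that, because $m$ is even, each single generator $(\varepsilon,\pm1)$ has order $m$ in $\mathbb{Z}_{2}\times\mathbb{Z}_{m}$ (as $\operatorname{lcm}(2,m)=m$), so each of the four arc sets generated by $(0,1),(0,-1),(1,1),(1,-1)$ is by itself a $\vv{C}_{m}$-factor, namely two directed $m$-cycles, one through each coset of the subgroup it generates. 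This cheap supply of $\vv{C}_{m}$-factors is exactly what makes odd values of $r$ accessible on this graph.

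The second ingredient is a pair construction producing $\vv{C}_{2m}$-factors. For generators $g,h$ whose sum $g+h$ has order $m$ (and with $g$ lying outside the order-$m$ subgroup $\langle g+h\rangle$), the alternation $g,h,g,h,\dots$ from any start traces a single directed $2m$-cycle, and the complementary out-arcs at each vertex form a second one; hence $\vv{X}(\mathbb{Z}_{2}\times\mathbb{Z}_{m},\{g,h\})$ splits into two $\vv{C}_{2m}$-factors. Here $(1,0)$ is distinguished: since $(1,0)+(\varepsilon,\pm1)\in\{(0,\pm1),(1,\pm1)\}$ always has order $m$, the generator $(1,0)$ pairs cleanly with any of the other four.

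For $r=3$ this already closes the argument: I would take $(0,1),(0,-1),(1,1)$ as three $\vv{C}_{m}$-factors and split $\{(1,0),(1,-1)\}$ into two $\vv{C}_{2m}$-factors, using each of the five generators exactly once. For $r=1$ I would set aside $(0,1)$ as the single $\vv{C}_{m}$-factor, split $\{(1,0),(1,1)\}$ into two $\vv{C}_{2m}$-factors by the alternation above, and assign the remaining two $\vv{C}_{2m}$-factors to the pair $\{(0,-1),(1,-1)\}$.

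This last split is the only genuine obstacle, because $(0,-1)+(1,-1)=(1,-2)$ has order $m$ only when $m\equiv2\pmod4$, so the naive alternation breaks down for $m\equiv0\pmod4$. I would circumvent this with a block pattern valid for every even $m$: in the first factor use the switching arc $(1,-1)$ only at the two vertices $(0,0)$ and $(1,0)$ and the non-switching arc $(0,-1)$ everywhere else, which runs once around layer $1$ and then once around layer $0$, giving a single $2m$-cycle; the complementary factor, which switches everywhere except at $(0,0)$ and $(1,0)$, returns to its start after $m-1$ switches in each layer precisely because $m$ is even, and is again a single $2m$-cycle. Checking that these two factors are arc-disjoint spanning directed cycles is a short direct verification on the second coordinate and the layer parity, and it finishes both cases without any case split on $m\bmod4$.
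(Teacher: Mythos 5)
Your proposal is correct and is essentially the paper's own construction, recast in the directed Cayley-graph language that the paper itself sets up before Lemma~\ref{lemma4}: your single-generator $\vv{C}_m$-factors, your alternation pair through $(1,0)$, and your block-pattern pair on $\{(0,-1),(1,-1)\}$ are, up to a translate and the swap of $(1,1)$ with $(1,-1)$, exactly the paper's $F_1$; $F_2, F_3$; and $F_4, F_5$ for $r=1$, while for $r=3$ the paper's reversed factors $R(F_1)$ and $F_5'$ play the role of your additional single-generator $\vv{C}_m$-factors. The generator bookkeeping makes your verification cleaner to state, but the underlying factorization is the same.
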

\begin{proof}
\textbf{Case 1 \boldmath{$(r=1)$}} Let $\vv{C}_{m}^{(0)}=(v_0,v_1, \dots,\allowbreak v_{m-1})$ be a directed $m$-cycle of $C_{m}^{*}[2]\oplus I_{2m}^*$, where $v_i=(0,i)$ for $0\leq i\leq m-1$, and it can be checked that $F_1=\vv{C}_{m}^{(0)}\cup (\vv{C}_{m}^{(0)}+(1,0))$ is a directed $m$-cycle factor of $C_{m}^{*}[2]\oplus I_{2m}^*$. Also, let $\vv{C}_{2m}^{(1)}=(u_0,u_1, \dots,u_{2m-1})$ be a directed $2m$-cycle of $C_{m}^{*}[2]\oplus I_{2m}^*$, where $u_{2i} = (0,i)$, and $u_{2i+1} = (1,i)$ for $0\leq i\leq m-1$. Similarly, it can be checked that $F_2=\vv{C}_{2m}^{(1)}$ and $F_3=\vv{C}_{2m}^{(1)}+(1,0)$ are arc disjoint directed $2m$-cycle factors of $C_{m}^{*}[2]\oplus I_{2m}^*$.

Let $\vv{C}_{2m}^{(2)}=(x_0,x_1, \dots,\allowbreak x_{2m-1})$ be a directed $2m$-cycle of $C_{m}^{*}[2]\oplus I_{2m}^*$, where $x_0=(0,0)$, $x_m=(1,0)$, $x_{i+1}=(0,m-1-i)$ for $0\leq i\leq m-2$ and  $x_{j+1+m}=(1, m-1-j)$ for $0\leq j\leq m-2$. 

$F_4=\vec{C}_{2m}^{(2)}$ and $F_5=(C_{m}^{*}[2]\oplus I_{2m}^*)-\bigoplus_{i=0}^{4} F_{i}$ are arc disjoint directed $2m$-cycle factors of $C_{m}^{*}[2]\oplus I_{2m}^*$. Then, $\left\{F_{1}, F_{2}, F_{3}, F_{4},  F_{5}\right\}$ is a $\{\vv{C}_{m}^{1}, \vv{C}_{2m}^{4}\}$-factorization of $C_{m}^{*}[2]\oplus I_{2m}^*$.

\textbf{Case 2 \boldmath{$(r=3)$}} Let $F_{1}$, $F_{2}$ and $F_{3}$ be the same as in Case 1. Using the arcs of $F_{4} \bigcup F_{5}$, we obtain two new $\vv{C}_{m}$-factors.

$F_{4}^{'} = R(F_{1})$ is a $\vv{C}_{m}$-factor of $C_{m}^{*}[2]\oplus I_{2m}^*$. Let $\vv{C}=(y_0,y_1, \dots,\allowbreak y_{m-1})$ be a directed $m$-cycle of $C_{m}^{*}[2]\oplus I_{2m}^*$, where 
$$ y_{i} =
\begin{cases}
      (0,i) & if \,\   i \,\ is \,\  even\\
      (1,i) & if \,\  i \,\  is \,\  odd
\end{cases}  \,\ \text{for} \,\ 0\leq i\leq m-1. 
$$
It can be checked that $F_5^{'}=R(\vv{C})\cup R(\vv{C}+(1,0))$ is a directed $m$-cycle factor of $C_{m}^{*}[2]\oplus I_{2m}^*$. 

So, $\left\{F_{1}, F_{2}, F_{3}, F_{4}^{'},  F_{5}^{'}\right\}$ is a $\{\vv{C}_{m}^{3}, \vv{C}_{2m}^{2}\}$-factorization of $C_{m}^{*}[2]\oplus I_{2m}^*$.
\end{proof}

\begin{lemma}\label{lemma4.3}
Let $m\geq4$ be an even integer. Then $C_{m}^{*}[2]$ has a $\{\vv{C}_{m}^{r}, \vv{C}_{2m}^{s}\}$-factorization for $r\in \{0,2,4\}$ and $r+s=4$.
\end{lemma}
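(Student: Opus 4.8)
The plan is to establish the three cases $r\in\{0,2,4\}$ by combining the Häggkvist Lemma with Proposition \ref{lemma1.3}, closely following the structure already used in Lemma \ref{lemma4} (where the analogous statement was proved with $\vv{C}_{2m}$ but without any $\vv{C}_m$ factors). The key underlying fact is that for even $m$, Lemma \ref{lemma1.7} guarantees that the undirected graph $C_m[2]$ admits a decomposition into two $C_m$-factors, and the Häggkvist Lemma gives a decomposition into two $C_{2m}$-factors. These two undirected decompositions of $C_m[2]$ are the raw material from which all three directed factorizations will be built.

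First I would handle the extreme case $r=0$ (so $s=4$). Here I take the decomposition of $C_m[2]$ into two $C_{2m}$-factors via the Häggkvist Lemma, apply Proposition \ref{lemma1.3} to obtain a $C_{2m}^*$-factorization of $C_m^*[2]$ consisting of two $C_{2m}^*$-factors, and then use the fact that each symmetric cycle $C_{2m}^*$ decomposes into two $\vv{C}_{2m}$-factors (its two opposite orientations). This yields four $\vv{C}_{2m}$-factors, giving the $\{\vv{C}_{2m}^4\}$-factorization. For $r=4$ (so $s=0$), I instead start from Lemma \ref{lemma1.7}'s decomposition of $C_m[2]$ into two $C_m$-factors; Proposition \ref{lemma1.3} converts this to two $C_m^*$-factors of $C_m^*[2]$, and each $C_m^*$ splits into its two opposite orientations, producing four $\vv{C}_m$-factors.

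For the intermediate case $r=2$ (so $s=2$), the natural approach is to mix the two constructions: decompose $C_m[2]$ into one $C_m$-factor and one $C_{2m}$-factor. The existence of such a mixed undirected decomposition is exactly what the $r=1$ case of Lemma \ref{lemma1.7} would supply — but notice that Lemma \ref{lemma1.7} explicitly excludes $r=1$ for even $m$ as a possible exception. This is the point I expect to be the main obstacle, since I cannot simply invoke Lemma \ref{lemma1.7} to peel off one $C_m$-factor and one $C_{2m}$-factor simultaneously. To circumvent it, I would note that a single $C_{2m}^*$-factor of $C_m^*[2]$ (obtained from the Häggkvist decomposition) decomposes into two $\vv{C}_{2m}$-factors, while the complementary $C_{2m}^*$-factor, being a symmetric $2m$-cycle factor, can alternatively be viewed after orientation as supplying $\vv{C}_m$ structure only if it genuinely breaks into $m$-cycles — which it does not in general. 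The cleaner route is therefore to work directly at the level of directed Cayley graphs: using the representation of $C_m^*[2]$ as $\vv{X}(\mathbb{Z}_2\times\mathbb{Z}_m, S)$ with $S=\{(0,1),(1,1),(0,-1),(1,-1)\}$, I would exhibit an explicit partition of $S$ into two pairs, each pair generating a $\vv{C}_m$-factor or a $\vv{C}_{2m}$-factor as needed, so that two of the four connection arcs yield $\vv{C}_m$-factors and the remaining two yield $\vv{C}_{2m}$-factors.

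The verification that each chosen pair of connection-set elements generates a spanning union of the correct directed cycles (checking the relevant $\gcd$/order conditions in $\mathbb{Z}_2\times\mathbb{Z}_m$) is routine but is the step that must be carried out with care, and it is where the even parity of $m$ is essential. I expect the argument to close uniformly for all even $m\geq4$ once this explicit connection-set partition is fixed, giving the desired $\{\vv{C}_m^r,\vv{C}_{2m}^s\}$-factorization of $C_m^*[2]$ in each of the three cases $r\in\{0,2,4\}$.
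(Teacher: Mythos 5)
Your cases $r=0$ and $r=4$ are correct and are exactly the paper's (the paper just cites Lemmas~\ref{lemma4} and~\ref{lemma2.4}, which are themselves proved by your H\"aggkvist\,$+$\,Proposition~\ref{lemma1.3}\,$+$\,orientation-splitting argument), and you are also right that Lemma~\ref{lemma1.7} cannot be invoked for a mixed undirected decomposition because of its $r=1$ exception for even $m$. The gap is in the $r=2$ case, precisely at the step you deferred as ``routine.'' You propose to obtain the two $\vv{C}_{2m}$-factors from connection-set elements by ``checking the relevant gcd/order conditions in $\mathbb{Z}_2\times\mathbb{Z}_m$.'' This provably cannot work: for even $m$ the group $\mathbb{Z}_2\times\mathbb{Z}_m$ is not cyclic, so it contains no element of order $2m$; each of the four differences $(0,\pm1),(1,\pm1)$ has order exactly $m$, so every singleton difference class spans two directed $m$-cycles, i.e.\ is a $\vv{C}_m$-factor, and can never be a $\vv{C}_{2m}$-factor. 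Consequently no partition of $S$ combined with order computations produces a directed Hamiltonian cycle: each of the two required $\vv{C}_{2m}$-factors must mix arcs from \emph{both} of the remaining differences $(0,1)$ and $(1,1)$, and exhibiting that mixture is the real content of the proof, which your proposal never supplies.

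For comparison, the paper closes this case with an explicit unbalanced ``zig-zag'': $F_1$ is the directed $2m$-cycle that runs around layer $0$ on $(0,1)$-arcs, crosses to layer $1$ by a $(1,1)$-arc, runs around layer $1$, and crosses back (so $2m-2$ arcs of difference $(0,1)$ and two of difference $(1,1)$), and $F_2$ is the complementary alternating $2m$-cycle using the $2m-2$ remaining $(1,1)$-arcs and the two leftover $(0,1)$-arcs; the differences $(0,-1)$ and $(1,-1)$ then give the two $\vv{C}_m$-factors (the factors $F_4'$ and $F_5'$ recycled from Lemma~\ref{lemma4.2}, which is exactly your singleton idea). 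So your high-level split of the arc set --- two difference classes for the $m$-cycle factors, the other two for the $2m$-cycle factors --- is the same as the paper's; what is missing, and cannot be recovered by the method you state, is the explicit decomposition of the $\{(0,1),(1,1)\}$-arcs into two directed Hamiltonian cycles.
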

\begin{proof}
The cases $r\in \{0,4\}$ are obtained by lemmata \ref{lemma4} and \ref{lemma2.4}. Let $\vv{C}_{2m}^{(1)}=(u_0,u_1, \dots,\allowbreak u_{2m-1})$ be a directed $2m$-cycle of $C_{m}^{*}[2]$, where 
$$ u_i =
\begin{cases}
      (0,i) & if \,\  0\leq i\leq m-1,\\
      (1,i) & if \,\  m\leq i\leq 2m-1.
\end{cases}$$
And it can be checked that $F_1=\vv{C}_{2m}^{(1)}$ is a $\vv{C}_{2m}$-factor of $C_{m}^{*}[2]$.  Let $\vv{C}_{2m}^{(2)}=(v_0,v_1, \dots,\allowbreak v_{2m-1})$ be a directed $2m$-cycle of $C_{m}^{*}[2]$, where 
$$ v_i =
\begin{cases}
      u_i & if \,\  i \,\  is \,\  even, \\
     u_i+(1,0) & if \,\  i \,\  is \,\  odd.
\end{cases}$$
$F_{2}=\vv{C}_{2m}^{(2)}$ is a $\vv{C}_{2m}$-factor of $C_{m}^{*}[2]$. Then, $\left\{F_{1}, F_{2}, F_{4}^{'},  F_{5}^{'}\right\}$ is a $\{\vv{C}_{m}^{2}, \vv{C}_{2m}^{2}\}$-factorization of $C_{m}^{*}[2]$ where $F_{4}^{'}$ and $F_{5}^{'}$ are the same factors in Lemma \ref{lemma4.2}.
\end{proof}
\begin{theorem}
Let $r$, $s$ be nonnegative integers, and let $m\geq 4$ be even. Then, $\mathrm{HWP}^{*}(v; m^{r}, (2m)^{s})$ has a solution if and only if $m| v$, $r+s=v-1$ and $v\geq 4$ except for $(s,v,m)\in\{(0,4,4),(0,6,3),(5,6,6)\}$, and except possibly when $s\in\{1,3\}$.
\end{theorem}
\begin{proof}
By Theorem \ref{ilk}, $\mathrm{HWP}^{*}(v; 4^{r}, 8^{s})$ has a solution for $r+s=v-1$, so we may assume that $m\geq 6$. Furthermore, by Theorem \ref{OP}, a solution to the $\mathrm{HWP}^{*}(v; m^{r}, (2m)^{s})$ exists for $r=s=0$ and  except for $(s,v,m)\in\{(0,4,4),$ $(0,6,3),(5,6,6)\}$. 

Factorize $K^{*}_{2mx}$ into a $K_{2m}^{*}$-factor and $(2x-2)$ $K_{(m:2)}^{*}$-factors. $K_{(m:2)}^{*}$ decomposes into $m$ $\vv{C}_{m}$-factors or $m$ $\vv{C}_{2m}$-factors by Lemma \ref{dliu}. So, $K_{2m}^{*}$ must be decomposed into $\vv{C}_{m}$-factors and $\vv{C}_{2m}$-factors. As before, $K_{2m}^{*}$ can be factorized as $K_{m}^{*}[2] \oplus I_{2m}^*$. So, $K_{2m}^{*}$ has a $\{(C_{m}^{*}[2])^{\frac{m-4}{2}}, I_{2m}^*, \Gamma^{*}_m\}$-factorization. By Lemma \ref{lemma4.3}, each of $C_{m}^{*}[2]$-factors has a $\{\vv{C}_{m}^{r_0},\vv{C}_{2m}^{s_0}\}$-factorization for $r_0\in \{0,2,4\}$ and $r_0+s_0=4$. By lemmata \ref{lemma4.2} and \ref{lemma2.6}, $C_{m}^{*}[2]\oplus I_{2m}^*$ has a $\{\vv{C}_{m}^{r_1},\vv{C}_{2m}^{s_1}\}$-factorization for $r_1\in \{0,1,3\}$ and $r_1+s_1=5$. By Lemma \ref{lemma4.1}, $\Gamma^{*}_m$ has a $\{\vv{C}_{m}^{r_2}, \vv{C}_{2m}^{s_2}\}$-factorization for  $r_2\in \{0,6\}$ with $r_2+s_2=6$. Those factorizations give a $\{\vv{C}_{m}^{r}, \vv{C}_{2m}^{s}\}$-factorization of $K_{2m}^{*}$ where $r=(\frac{m-6}{2})r_0+r_1+r_2$ and $s=(\frac{m-6}{2})s_0+s_1+s_2$ satisfying $r+s=(\frac{m-6}{2})4+5+6=2m-1$ with $0\leq r, s \leq 2m-1$ and $s\notin \{1,3\}$.

Placing a $\vv{C}_{m}$-factorization on $r'$ of the $K_{(m:2)}^{*}$-factors for $0\leq r'\leq 2x-2$, a $\vv{C}_{2m}$-factorization on $s'$ of the $K_{(m:2)}^{*}$ for $r'+s'=2x-2$, and taking a $\{\vv{C}_{m}^{r},\vv{C}_{2m}^{s}\}$-factorization of $K_{2m}^{*}$ give a $\{\vv{C}_{m}^{mr'+r},\vv{C}_{2m}^{ms'+s}\}$-factorization of $K_{2mx}^{*}$. Then, $\mathrm{HWP}^{*}(v; m^r, 2m^s)$ has a solution except possibly when $s\in \{1,3\}$.
\end{proof}

\section{Conclusions}
Combining the results of  Theorem \ref{maintheorem} and \ref{maintheorem2}, we obtain one of the main results of this paper.
\begin{theorem}
Let $r$, $s$ be nonnegative integers, and let $m\geq 4$ be even. Then, $\mathrm{HWP}^{*}(v; 2^{r}, m^{s})$ has a solution if $m| v$, $r+s=v-1$, $s \neq 1$, $(r, v)\neq(0, 6)$, $(m, r, v)\neq (4,0,4)$, and one of the following conditions holds;
\begin{enumerate}
\item $m > 4$, $s\neq 3$ and $m\equiv 0 \pmod 4$,
\item $m > 4$, $\frac{v}{m}$ is even, $s\neq 3$ and $m\equiv 2 \pmod 4$,
\item $m=4$ and $v\equiv 0,8,16 \pmod {24}$,
\item $m=4$, $v\equiv 12 \pmod {24}$ and $s\notin \{3, 5\}$,
\item $m=4$, $v\equiv 4, 20 \pmod {24}$ and $r$ is odd.
\end{enumerate}
\end{theorem}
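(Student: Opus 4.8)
The plan is to derive this statement as an immediate corollary of Theorems \ref{maintheorem} and \ref{maintheorem2}, splitting on whether $m>4$ or $m=4$; no new construction is needed, since the two cited theorems already carry out all of the factorization work. First I would dispose of the shared hypotheses: $m\mid v$, $r+s=v-1$, $s\neq 1$, and the two excluded triples $(r,v)=(0,6)$ and $(m,r,v)=(4,0,4)$ appear verbatim (or as the relevant special cases) among the hypotheses of both parent results, so in each branch they guarantee that we avoid the genuine non-existence cases rather than the merely ``possibly'' exceptional ones.

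For the branch $m>4$ (equivalently $m\geq 6$ even, so that $m\mid v$ forces $v\geq m\geq 6$), I would invoke Theorem \ref{maintheorem}. That theorem yields a solution whenever $m\mid v$, $r+s=v-1$, $v\geq 6$, $s\neq 1$ and $(r,v)\neq(0,6)$, unless we land in one of its two possibly exceptional families, namely $s=3$ with $m\equiv 0\pmod 4$, or $s=3$ with $m\equiv 2\pmod 4$ and $v/m$ odd. Under the present condition (1) we have $m\equiv 0\pmod 4$ and $s\neq 3$, so the first family is avoided and the second is vacuous (its congruence on $m$ contradicts $m\equiv 0$); under condition (2) we have $m\equiv 2\pmod 4$, $s\neq 3$ and $v/m$ even, so both families are escaped. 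In either case Theorem \ref{maintheorem} produces the desired $\{(K_2^*)^r,\vv{C}_m^s\}$-factorization.

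For the branch $m=4$ I would invoke Theorem \ref{maintheorem2}, whose hypotheses $r+s=v-1$, $s\neq 1$ and $(r,v)\neq(0,4)$ are again supplied, and whose only possibly exceptional families are ($r\geq 2$ even with $v\equiv 4,20\pmod{24}$) and ($s\in\{3,5\}$ with $v\equiv 12\pmod{24}$). The bulk of the (light) work in this branch is the residue bookkeeping modulo $24$: condition (3) restricts to $v\equiv 0,8,16\pmod{24}$, which meets neither exceptional residue set, so a solution exists unconditionally; condition (4) takes $v\equiv 12\pmod{24}$ but forbids $s\in\{3,5\}$, dodging the second family while lying outside the residues $4,20$ of the first; and condition (5) takes $v\equiv 4,20\pmod{24}$ but forces $r$ odd, dodging the first family while lying outside the residue $12$ of the second. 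Thus each of (3)--(5) lands in a guaranteed-solution region of Theorem \ref{maintheorem2}.

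I do not expect a genuine mathematical obstacle here: all constructive difficulty is already absorbed into Theorems \ref{maintheorem} and \ref{maintheorem2}, and the final statement only re-packages their positive ranges under clean congruence/parity headings. The one point requiring care is verifying that the five listed conditions are chosen precisely so as to avoid every possibly exceptional case of the two parent theorems --- in particular, that condition (2)'s requirement ``$v/m$ even'' and the parity/residue restrictions in conditions (4)--(5) are exactly what is needed to escape the open cases. Once that matching is confirmed, the theorem follows by selecting the appropriate branch and quoting the corresponding factorization.
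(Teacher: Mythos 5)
Your proposal is correct and takes essentially the same route as the paper: the paper's own proof consists precisely of the observation that the statement follows by combining Theorems \ref{maintheorem} and \ref{maintheorem2}, splitting on $m>4$ versus $m=4$, with the five listed conditions chosen exactly to avoid the genuine and possible exceptions of those two results. Your residue-and-parity bookkeeping is the same matching the paper relies on, so nothing further is needed.
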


In this paper we also show that $K^{*}_{2mx}$  has a $\{\vv{C}_{m}^{r}, \vv{C}_{2m}^{s}\}$-factorization for $r+s=2mx-1$, which means that the solution of $\mathrm{HWP}^{*}(2mx; m^{r}, (2m)^{s})$ exists.
\begin{theorem}
Let $r$, $s$ be nonnegative integers, and let $m\geq 4$ be even. Then, $\mathrm{HWP}^{*}(v; m^{r}, (2m)^{s})$ has a solution if and only if $m|v$, $r+s=v-1$ and $v\geq 4$ except for $(s,v,m)\in\{(0,4,4),(0,6,3),(5,6,6)\}$, and except possibly when $s\in\{1,3\}$.
\end{theorem}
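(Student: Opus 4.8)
The plan is to follow the reduction-and-assembly strategy of Section~4: reduce the factorization of $K_{2mx}^*$ to a single decomposition problem on $K_{2m}^*$, solve the latter by realising $K_{2m}^*$ as a blow-up of $K_m^*$ enlarged by one $K_2^*$-factor, and then reassemble using the purely $\vv{C}_m$ and purely $\vv{C}_{2m}$ factorizations of the equipartite building blocks.

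First I would dispose of the degenerate and forbidden cases. Necessity of $m\mid v$ and $r+s=v-1$ (and of $2m\mid v$ whenever $s>0$, which forces $v=2mx$) follows from Lemma~\ref{necessary}; the excluded triples $(s,v,m)\in\{(0,4,4),(0,6,3),(5,6,6)\}$ are precisely the directed Oberwolfach configurations ruled out by Theorem~\ref{OP}, which also settles every case $r=s=0$. When $m=4$ the problem coincides with $\mathrm{HWP}^*(v;4^r,8^s)$, solved for all $r+s=v-1$ by Theorem~\ref{ilk}. I may therefore assume $m\geq 6$ is even and $s>0$, so that $v=2mx$ for some positive integer $x$.

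The main construction then proceeds by contraction. Partitioning the $2mx$ vertices into $2x$ parts of size $m$ and collapsing each part to a point produces $K_{2x}^*$, which splits into $2x-1$ $K_2^*$-factors by Proposition~\ref{lemma3}. From one of these factors I assemble a single $K_{2m}^*$-factor of $K_{2mx}^*$ (taking, for each matched pair of parts, all intra-part arcs together with the inter-part arcs joining them), and from each of the remaining $2x-2$ factors a $K_{(m:2)}^*$-factor. By Lemma~\ref{dliu} each $K_{(m:2)}^*$-factor admits both an all-$\vv{C}_m$ and an all-$\vv{C}_{2m}$ factorization, contributing multiples of $m$ to either $r$ or $s$; so the whole problem reduces to producing, on the single $K_{2m}^*$-factor, a $\{\vv{C}_m^{r_0},\vv{C}_{2m}^{s_0}\}$-factorization for as many values as possible with $r_0+s_0=2m-1$.

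For that I would use $K_{2m}^*\cong K_m^*[2]\oplus I_{2m}^*$ together with Walecki's decomposition (Lemma~\ref{wlackieven}) and Proposition~\ref{lemma1.3} to write $K_{2m}^*$ as $\{(C_m^*[2])^{(m-4)/2},\,I_{2m}^*,\,\Gamma^{*}_m\}$. Merging one $C_m^*[2]$ block with $I_{2m}^*$, I then invoke Lemma~\ref{lemma4.3} on each remaining $C_m^*[2]$ (yielding $r_0\in\{0,2,4\}$ with $r_0+s_0=4$), Lemmata~\ref{lemma4.2} and~\ref{lemma2.6} on $C_m^*[2]\oplus I_{2m}^*$ (yielding $r_1\in\{0,1,3\}$ with $r_1+s_1=5$), and Lemma~\ref{lemma4.1} on $\Gamma^{*}_m$ (yielding $r_2\in\{0,6\}$ with $r_2+s_2=6$); summing over the $\tfrac{m-6}{2}$ pure blocks gives $r=\tfrac{m-6}{2}r_0+r_1+r_2$ with $r+s=2m-1$, and gluing with the equipartite factorizations delivers a $\{\vv{C}_m^{mr'+r},\vv{C}_{2m}^{ms'+s}\}$-factorization of $K_{2mx}^*$ for every admissible split with $r+s=v-1$. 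The main obstacle, and the source of the clause ``except possibly when $s\in\{1,3\}$'', is exactly that the directed $2m$-cycle counts produced by these block lemmas on $K_{2m}^*$ never hit the values needed to make the global $s$ equal to $1$ or $3$; since the equipartite pieces only shift $s$ by multiples of $m\geq 6$, no reassembly can repair this, and those two residues remain open.
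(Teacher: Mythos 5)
Your proposal is correct and follows essentially the same route as the paper's own proof: the reduction of $K_{2mx}^*$ to a $K_{2m}^*$-factor plus $K_{(m:2)}^*$-factors handled by Lemma~\ref{dliu}, the decomposition $K_{2m}^*\cong K_m^*[2]\oplus I_{2m}^*$ into $\{(C_m^*[2])^{(m-4)/2}, I_{2m}^*, \Gamma_m^*\}$, and the block-by-block application of Lemmata~\ref{lemma4.3}, \ref{lemma4.2}, \ref{lemma2.6} and \ref{lemma4.1} with the same counting $r=\tfrac{m-6}{2}r_0+r_1+r_2$. The only differences are expository (you spell out the contraction step and the reason $s\in\{1,3\}$ survives as a possible exception more explicitly than the paper does).
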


Actually, since $\mathrm{HWP}^{*}(v; 2^{r}, m^{s})$ has a solution with a
few possible exceptions, this result can be extended to $m\geq 2$.

\end{document}